\author[F. Duzaar]{Frank Duzaar}
\address{Frank Duzaar\\Department Mathematik, Universit\"at
Erlangen--N\"urnberg\\ Bismarckstrasse 1 1/2, 91054 Erlangen, Germany}
\email{duzaar@mi.uni-erlangen.de}
\author[G. Mingione]{Giuseppe Mingione}
\address{Dipartimento di Matematica, Universit\`a di Parma\\
Viale G.~P.~Usberti 53/a, Campus, 43100 Parma, Italy}
\email{giuseppe.mingione@unipr.it.}
\newtheorem{theorem}{Theorem}[section]
\newtheorem{lemma}{Lemma}[section]
\theoremstyle{definition}
\newtheorem{remark}{Remark}[section]
\numberwithin{equation}{section}
\newcommand{\rif}[1]{(\ref{#1})}
\newcommand\ap{``}
\def\eqn#1$$#2$${\begin{equation}\label#1#2\end{equation}}
\def\charfn_#1{{\raise1.2pt\hbox{$\chi
_{\kern-1pt\lower3pt\hbox{{$\scriptstyle#1$}}}$}}}
\def\qq1{q_*}
\def\q2{q_{**}}
\def\ep{\varepsilon}
\def\en{\mathbb N}
\def\er{\mathbb R}
\def\loc{\operatorname{loc}}
\def\ma{\mathbb R^{Nn}}
\newdimen\vintbar
\def\vint{-\kern-\vintbar\int}
\def\A{\mathcal A}
\def\B{\mathcal B}
\def\tr{\tilde R}
\def\0{\boldsymbol 0}
\newcommand{\ratio}{\nu, L}
\newcommand{\divo}{\textnormal{div}}
 \newcommand{\mean}[1]{-\hskip-1.08em\int_{#1}}
\newcommand{\npma}{{\bf W}_{\frac{1}{p},p}^{\mu}}
\newcommand{\trif}[1] {\textnormal{\rif{#1}}}
\newtoks\by
\newtoks\paper
\newtoks\book
\newtoks\jour
\newtoks\yr
\newtoks\pages
\newtoks\vol
\newtoks\publ
\def\et{ \& }
\def\name[#1, #2]{#1 #2}
\def\ota{{\hbox{\bf ???}}}
\def\cLear{\by=\ota\paper=\ota\book=\ota\jour=\ota\yr=\ota
\pages=\ota\vol=\ota\publ=\ota}
\def\endpaper{\the\by, \textit{\the\paper},
{\the\jour} \textbf{\the\vol} (\the\yr), \the\pages.\cLear}
\def\endbook{\the\by, \textit{\the\book},
\the\publ, \the\yr.\cLear}
\def\endpap{\the\by, \textit{\the\paper}, \the\jour.\cLear}
\def\endproc{\the\by, \textit{\the\paper}, \the\book, \the\publ,
\the\yr, \the\pages.\cLear}
\title[Local Lipschitz regularity for degenerate elliptic systems] {Local Lipschitz regularity for degenerate elliptic systems}
\begin{document}\maketitle

\begin{abstract}
We start presenting an $L^{\infty}$-gradient bound for solutions to non-homogeneous $p$-Laplacean type systems and equations, via suitable non-linear potentials of the right hand side. Such a bound implies a Lorentz space characterization of Lipschitz regularity of solutions which surprisingly turns out to be independent of $p$, and that reveals to be the same classical one for the standard Laplacean operator. In turn, the a priori estimates derived imply the existence of locally Lipschitz regular solutions to certain degenerate systems with critical growth of the type arising when considering geometric analysis problems, as recently emphasized by Rivi\`ere \cite{rivinv}.
\end{abstract}
\section{Introduction and results} In this paper we consider a class of elliptic systems and scalar equations with $p$-growth of Schr\"odinger type
\eqn{aapp2}
$$
-\divo\, a(Du)=b(x,u,Du)V(x)\,, \quad  \qquad \mbox{in}\ \Omega$$ whose model involves the degenerate $p$-Laplacean operator:
\eqn{ppp0}
$$
-\divo\, (|Du|^{p-2}Du)=|Du|^qV(x) \qquad 0 \leq q \leq p-1\,.
$$
The above systems are considered in a bounded, Lipschitz domain $\Omega \subset \er^n$, with $n \geq 2$ while in general we shall assume at least that $V \in L^2(\Omega, \er^N)$; the higher dimensional case $n\geq 3$ will be of special importance for the connections we shall outline. The solutions considered are the usual energy ones coming for the spaces naturally associated to the considered operator, i.e. distributional solutions $u\colon\Omega \to \er^N$ with $u \in W^{1,p}(\Omega,\er^N)$, $N \geq 1$; as usual, for the results we are going to consider,  we assume also $p>1$. The main emphasis will the be on the extreme cases of the system \rif{ppp0} with respect to $q$, that is $q=0$
\eqn{typep}
$$
-\triangle_ p u \equiv -\divo\, (|Du|^{p-2}Du)=V
$$
and $q=p-1$
\eqn{critical}
$$
-\divo\, (|Du|^{p-2}Du)=|Du|^{p-1}V(x)\,.
$$
Further emphasis is then given on the conformal case $p=n>2$
\eqn{conf}
$$
-\divo\, (|Du|^{n-2}Du)=|Du|^{n-1}V(x)\,.
$$
In fact, the problems treated are related to a few recent and fundamental results of Rivi\`ere \cite{rivinv}, who in the two-dimensional case $n=2$ considered systems of the type
\eqn{riviere}
$$
-\triangle u =  W(x)\cdot Du\,,
$$
where $W(x)$ is a antisymmetric tensor field; in this case a main result in \cite{rivinv} is that $Du \in L^{2+\delta}$ for some $\delta>0$, which in turn implies the H\"older continuity of $u$. As explained in \cite{rivnotes} an additional bootstrap argument eventually leads to $Du \in L^q$ for every $q < \infty$.
The general $n>2$ dimensional version of the system \rif{riviere} which is relevant for the problems presented in \cite{rivinv, rivnotes} is given by the conformal system
\eqn{rivn}
$$
-\triangle_n u =  |Du|^{n-2} W(x)\cdot Du\,,
$$
which clearly falls in the realm of \rif{conf}, and for which one would like to prove $Du \in L^{n+\delta}$ according to Rivi\`ere's far reaching theory. At this stage, as emphasized by Rivi\`ere, another, stronger meaningful condition to consider on the vector field $W$ is that it belongs to the limiting Lorentz space space $L(n,1)$; therefore, the subtler anti-symmetry condition - which in this setting works as a {\em cancellation} condition - is replaced by a limiting {\em size} condition. The identification of the limiting $L(n,1)$ condition was first done by H\'elein in his beautiful work on the moving frames techniques, introduced to treat cases of maps taking values in manifold without symmetries; see \cite{He1, He2}.  With such an assumption the system in \rif{rivn} now falls in the realm of those in \rif{conf}, and in this case we can prove an existence and regularity theorem which in particular yields locally Lipschitz solutions. What of course differs here from what is considered in \cite{rivinv, rivnotes} is the a priori regularity for any local solution of \rif{conf} required there, since we are proving here regularity only for solutions obtained by approximation via suitable regularized problems. On the other hand an interesting point is that the a priori estimates are independent - as it must happen in such cases - of the particular approximation chosen; see also Remark \ref{indepe}.

More in general, the results we are presenting in this paper address the issue of the boundedness of the gradient of the non-homogeneous $p$-Laplacean system and give sharp answers to the question. In particular, we show that when the right hand side function $V$ belongs to the limiting Lorentz space $L(n,1)$ the gradient of the solution considered is locally bounded.

\subsection{Results} We consider equations/systems of the type
\eqn{ppp}
$$
-\divo\, a(Du)=V(x)\,.
$$
The continuous vector field $a \colon \Omega \times \ma \to \ma$ is assumed to be $C^1$-regular in the gradient variable $z$ in $\ma$ when $p\geq 2$ and in $\er^n\setminus \{0\}$ if $1<p<2$ and $s>0$. The following standard {\em growth and ellipticity assumptions} are assumed:
\eqn{asp}
$$
\left\{
    \begin{array}{c}
    |a(z)|+|\partial a(z)|(|z|^2+s^2)^{\frac{1}{2}} \leq L(|z|^2+s^2)^{\frac{p-1}{2}} \\ [3 pt]
    \nu^{-1}(|z|^2+s^2)^{\frac{p-2}{2}}|\lambda|^{2} \leq \langle \partial a(z)\lambda, \lambda
    \rangle
    \end{array}
    \right.
$$
whenever $z \in \ma \setminus\{0\}$ and $\lambda \in \ma$, where
$0< \nu\leq L$ and $s\geq 0$ is a fixed parameter introduced to distinguish the degenerate case ($s=0$) from the non-degenerate one ($s>0$).
The prototype of \rif{ppp} is - choosing $s=0$ - clearly given by the $p$-Laplacean equation/system in \rif{typep}. On the right hand side vector field $V$ we shall initially assume that is an $L^2$-map with values in $\er^N$, defined on $\Omega$; eventually letting $V\equiv 0$ in $\er^n \setminus \Omega$, without loss of generality we may assume that $V \in L^2(\er^n, \er^N)$. Moreover, although a lighter assumption can be considered, in order to consider only the core aspects of the problem - that is the a priori estimates - we shall give a treatment in the context of the usual monotonicity methods, therefore not considering so called very weak solutions - that is distributional solutions not lying in $W^{1,p}$. We shall therefore always assume that the right hand side $V$ belongs to the dual space $W^{-1,p'}$. In turn, for this it will be sufficient to have that $V \in L^{(p^*)'}$ therefore the minimal requirement on $V$ will be
\eqn{VVV}
$$
V \in L^{m}(\er^n, \er^N)\,,\qquad   m:=\max\, \left\{2,\frac{np}{np-n+p}\right\}\,.
$$
Results of the type considered here when $V \in L^{2}\setminus W^{-1,p'}$ - which on the other hand only happens when $p< 2n/(n+2)$ - can be still obtained, but these involve as mentioned above, very weak solutions; in this case an approximation procedure as considered in \cite{BG1, BG2} together with the a priori regularity estimates leads to assert the existence of a locally Lipschitz very weak solution $u \in W^{1,p-1}$. For the sake of brevity we shall not pursue this path here. As a matter of fact, all the a priori estimates obtained in the following do not necessitate \rif{VVV} but only that $V$ belongs to $L^2$. In turn, the need for considering $V \in L^2$ comes from the very fact that our results will be formulated via the use of a suitable non-linear potential of the function $|V|^2$, and that reveals to be a natural object when dealing with the boundedness of the gradient. More precisely we define the non-linear potential
\eqn{nonp}
$$
{\bf P}^V(x, R):= \int_0^R \left(\frac{|V|^2(B(x,\varrho))}{\varrho^{n-2}}\right)^{\frac{1}{2}}\, \frac{d\varrho}{\varrho} $$ where, here and for the rest of the paper, we use the more compact notation
$$
|V|^2(B(x,\varrho)):= \int_{B(x,\varrho)}|V(y)|^2\, dy\,.
$$
Estimates via non-linear potentials are a by now classical tool when studying the regularity of solutions of non-linear elliptic equations in divergence form: a classical one is the so called Wolff potential defined by
$$
{\bf W}_{\beta,p}^V(x, R):= \int_0^R \left[\frac{|V|(B(x,\varrho))}{\varrho^{n-\beta p}}\right]^{\frac{1}{p-1}}\, \frac{d \varrho}{\varrho} \qquad  \qquad \beta < n/p\,,
$$
where $V$ is a Radon measure.
In this respect pointwise estimates for solutions are obtained in \cite{KM, TW, mis2, DM3}. The peculiarity of the non-linear potential introduced in \rif{nonp} relies in that it allows to derive in a particularly favorable way several borderline estimates which seem unreachable otherwise.

The first results we present are about general local weak solutions to \rif{ppp} and indeed involve an $L^\infty$-estimate of the gradient via the non-linear potential ${\bf P}^V$.
\begin{theorem}\label{main01} Let $u \in W^{1,p}_{\loc}(\Omega)$ be a weak solution to the equation \trif{ppp0} for $p>1$ under the assumptions \trif{asp} and \trif{VVV}; there exists a constant $c$ depending only on $n,p,\ratio$ such that
\eqn{apl}
$$
\|Du\|_{L^\infty(B_{R/2})}\leq  c \left(\mean{B_R} (s+|Du|)^p\, dx \right)^{\frac{1}{p}}+ c\|{\bf P}^V(\cdot, R)\|_{L^\infty(B_{R})}^{\frac{1}{p-1}}
$$
holds whenever $B_{R}\subset \Omega$.
\end{theorem}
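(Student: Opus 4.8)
The plan is to establish a pointwise bound for $|Du|$ at Lebesgue points $x_{0}\in B_{R/2}$ and then pass to the supremum; the proof rests on a comparison with the solution of the associated homogeneous system, on the classical $C^{1,\alpha}$-theory for such solutions, and on a dyadic iteration in which the right-hand side accumulates exactly into the potential ${\bf P}^{V}$. First I would fix $x_{0}$ and, for a ball $B_{r}\equiv B_{r}(x_{0})\CC B_{R}$, introduce the comparison map $v\in u+W^{1,p}_{0}(B_{r},\er^{N})$ solving $-\divo\, a(Dv)=0$ in $B_{r}$. Under \trif{asp} this homogeneous system enjoys the standard interior estimates: the sup-bound $\sup_{B_{r/2}}(s+|Dv|)\le c\,(\mean{B_{r}}(s+|Dv|)^{p}\,dx)^{1/p}$ and the excess-decay estimate $\mean{B_{\sigma r}}|Dv-(Dv)_{B_{\sigma r}}|^{p}\,dx\le c\,\sigma^{\alpha p}\mean{B_{r}}|Dv-(Dv)_{B_{r}}|^{p}\,dx$ for $\sigma\in(0,1/2]$, with $\alpha=\alpha(n,p,\ratio)\in(0,1]$; moreover, in the \emph{non-degenerate regime} --- when $\lambda:=s+|(Du)_{B_{r}}|$ dominates the excess of $Du$ on $B_{r}$ --- one has the sharper Campanato-type decay obtained by freezing and linearizing the operator at $(Du)_{B_{r}}$, the resulting bilinear form having ellipticity ratio comparable to $\lambda^{p-2}$, hence \emph{$p$-independent} after rescaling by $\lambda$.

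Next I would prove the comparison estimate between $u$ and $v$. Testing the difference of the two equations with $u-v\in W^{1,p}_{0}(B_{r})$, the monotonicity encoded in \trif{asp} yields
$$
\int_{B_{r}}(s^{2}+|Du|^{2}+|Dv|^{2})^{\frac{p-2}{2}}|Du-Dv|^{2}\,dx\le c\int_{B_{r}}|V|\,|u-v|\,dx\, ;
$$
estimating the right-hand side by H\"older's inequality and the Sobolev--Poincar\'e inequality for $u-v$ (only $V\in L^{2}$ is used), and combining with the elementary vector inequalities relating $|Du-Dv|^{p}$ to the left-hand integrand, one obtains after reabsorption the scale-invariant bound
$$
\left(\mean{B_{r}}|Du-Dv|^{p}\,dx\right)^{\frac1p}\le c\left(\frac{|V|^{2}(B_{r})}{r^{n-2}}\right)^{\frac{1}{2(p-1)}}=:c\,A(x_{0},r)^{\frac{1}{p-1}}\, ,
$$
with the companion, \emph{linear} version $\lambda^{p-2}\,(\mean{B_{r}}|Du-Dv|^{2}\,dx)^{1/2}\le c\,A(x_{0},r)$ valid in the non-degenerate regime, where $A$ appears only to the \emph{first} power. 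The sub-quadratic case $1<p<2$ requires the usual interpolation correction with weights $(s+|Du|+|Dv|)^{2-p}$ and the standing assumption $s>0$, but no new idea.

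Writing $Du=Dv+(Du-Dv)$ and combining the decay of $Dv$ with the comparison estimate --- the latter costing a volume factor $\sigma^{-n}$ when one passes from $B_{r}$ to $B_{\sigma r}$ --- one obtains, for the excess $\Phi(x_{0},r):=(\mean{B_{r}}|Du-(Du)_{B_{r}}|^{p}\,dx)^{1/p}$, a decay inequality of the form $\Phi(x_{0},\sigma r)\le c_{*}\sigma^{\alpha}\Phi(x_{0},r)+c_{*}\sigma^{-n/p}A(x_{0},r)^{1/(p-1)}$, together with the increment estimate $|(Du)_{B_{\sigma r}}-(Du)_{B_{r}}|\le\sigma^{-n/p}\Phi(x_{0},r)$ (with the sharper $A$-to-the-first-power analogue in the non-degenerate regime). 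Fixing $\sigma$ so that $c_{*}\sigma^{\alpha}\le1/4$ and iterating over $r_{j}:=\sigma^{j}R$, $j\ge0$ (started from a ball $B(x_{0},R/2)\subset B_{R}$), the geometric part is reabsorbed and one is left with $\sum_{j\ge0}\Phi(x_{0},r_{j})\le c\,\Phi(x_{0},R)+c\sum_{j\ge0}A(x_{0},r_{j})^{1/(p-1)}$; telescoping the increments of the averages, $|(Du)_{B_{r_{k}}}|\le|(Du)_{B_{R/2}}|+c\sum_{j\ge0}\Phi(x_{0},r_{j})$. Since $A(x_{0},\cdot)$ is, up to a constant $c=c(n,\sigma)$, dominated on each dyadic annulus by the integrand of ${\bf P}^{V}$, one has $\sum_{j}A(x_{0},r_{j})\le c\,{\bf P}^{V}(x_{0},R)$; passing to the limit $k\to\infty$ at a Lebesgue point of $Du$, and absorbing the standard scaling bookkeeping (so that $\mean{B(x_{0},R/2)}(s+|Du|)^{p}\,dx\le c\,\mean{B_{R}}(s+|Du|)^{p}\,dx$ and ${\bf P}^{V}(x_{0},R/2)\le\|{\bf P}^{V}(\cdot,R)\|_{L^{\infty}(B_{R})}$), one arrives at \trif{apl} after taking the supremum over $x_{0}\in B_{R/2}$.

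The serious obstacle is to run this iteration so that the degeneracy is not seen --- with constants uniform in $s$ and in $\|Du\|_{L^{\infty}}$, and with the forcing accumulating exactly as $\sum_{j}A(x_{0},r_{j})\sim{\bf P}^{V}(x_{0},R)$ to the \emph{first} power, so that the Riesz-type potential \trif{nonp} governs the bound and the estimate becomes formally $p$-independent. This forces a scale-by-scale dichotomy: in the non-degenerate regime one linearizes and runs the quadratic (De Giorgi/Campanato) theory with $p$-independent ellipticity, the excess increments being of size $\sim\lambda^{2-p}A(x_{0},r_{j})$, so that summing and reabsorbing the weight $\lambda^{2-p}\sim|Du|^{2-p}$ produces precisely the exponent $1/(p-1)$ appearing in \trif{apl}; in the degenerate regime $|Du|$ is itself comparable to (or dominated by) the excess --- which is controlled by $A^{1/(p-1)}$ --- so that this regime cannot drive the growth. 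Making the two regimes match consistently along all dyadic scales while keeping the error terms summable into ${\bf P}^{V}$ is the technical heart of the argument; the fact that the vectorial setting $N\ge1$ provides no maximum principle is precisely what makes this excess-decay route mandatory, in place of a direct De Giorgi iteration on $|Du|$.
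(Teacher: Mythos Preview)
Your approach is essentially correct but follows a genuinely different route from the paper. You argue by comparison with $a$-harmonic maps and a Campanato-type excess-decay iteration with a degenerate/non-degenerate dichotomy; this is the strategy of \cite{DM3, mis2} and later Kuusi--Mingione work. The paper instead proceeds by a \emph{direct De Giorgi iteration on $|Du|^{p}$} (the Bernstein trick). After regularizing the vector field and truncating $V$ so that the approximating solution $u_{\ep}$ lies in $W^{2,2}_{\loc}\cap C^{1,\alpha}_{\loc}$, one differentiates the equation and tests with $\eta^{2}(v-k)_{+}D_{s}u$, where $v:=(s_{\ep}^{2}+|Du_{\ep}|^{2})^{p/2}$. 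This yields a Caccioppoli inequality for $(v-k)_{+}$ with remainder $\int|\tilde V|^{2}$, $\tilde V:=(s_{\ep}^{2}+\|Du_{\ep}\|_{L^{\infty}(B_{R})}^{2})^{1/2}V$. A De Giorgi level-set iteration then gives $v(x)\le c(\mean_{B_{R}}v^{2})^{1/2}+c\,{\bf P}^{\tilde V}(x,2R)$; finally, the factor $\|Du_{\ep}\|_{L^{\infty}}$ hidden in $\tilde V$ is reabsorbed by Young's inequality and the iteration Lemma~\ref{simpfun} over concentric radii, producing \rif{apl}.

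What each approach buys: your comparison route avoids differentiating the equation and hence the approximation step, and it is the method that later extends to measure data and to pointwise (rather than merely $L^{\infty}$) potential estimates; on the other hand it requires carrying the degenerate/non-degenerate alternative through all scales, which you correctly identify as the delicate point. The paper's route is shorter and more self-contained here: the $L^{2}$ structure of ${\bf P}^{V}$ drops out naturally from the Caccioppoli remainder $\int|V|^{2}$, and the troublesome $\|Du\|_{L^{\infty}}$ weight is handled by a single reabsorption lemma rather than a scale-by-scale dichotomy.

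One correction: your closing remark that ``the vectorial setting $N\ge1$ provides no maximum principle, which is precisely what makes this excess-decay route mandatory, in place of a direct De Giorgi iteration on $|Du|$'' is not accurate. The paper shows exactly the opposite: under the Uhlenbeck structure \rif{uh} the same Caccioppoli inequality for $v=(s_{\ep}^{2}+|Du|^{2})^{p/2}$ survives in the vectorial case (Section~\ref{thmain02}), and the De Giorgi iteration goes through unchanged. Also note that Theorem~\ref{main01} as stated is scalar; the vectorial analogue is Theorem~\ref{main02}.
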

For more notation we refer to Section 2. An immediate corollary involves a sharp characterization of the Lipschitz continuity of solutions via the use of Lorentz spaces.
\begin{theorem}\label{mainu} Let $u \in W^{1,p}_{\loc}(\Omega)$ be a weak solution to the equation \trif{ppp0} for $p>1, n >2$, under the assumptions \trif{asp} and \trif{VVV}; assume that $V \in L(n,1)$ holds. Then $Du$ is locally bounded in $\Omega$. Moreover, for every open subset $\Omega' \Subset \Omega'' \Subset \Omega$ there exists a constant $c$ depending on $n, p, \ratio$ and dist$\,(\Omega', \partial\Omega)$ such that
\eqn{stimalo}
$$
\|Du\|_{L^\infty(\Omega')}\leq c \|Du\|_{L^p(\Omega'')} + c \|V\|_{L(n,1)(\Omega'')}+c s|\Omega''|^{1/p}\,.
$$
\end{theorem}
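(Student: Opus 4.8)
The plan is to derive Theorem \ref{mainu} directly from the pointwise bound \rif{apl} of Theorem \ref{main01}: the only real point is the elementary fact that the nonlinear potential ${\bf P}^V$ is controlled, uniformly in its centre, by the Lorentz (quasi-)norm of $V$, namely
\begin{equation}\label{pvl}
{\bf P}^V(x,R)\ \le\ c(n)\,\|V\|_{L(n,1)(B(x,R))}\qquad\text{whenever}\ B(x,R)\subset\Omega .
\end{equation}
Granting \rif{pvl} for a moment: given any ball $B_R\subset\Omega$ and any $x\in B_{R/2}$ we have $B(x,R/2)\subset B_R$, so ${\bf P}^V(x,R/2)\le c(n)\|V\|_{L(n,1)(B_R)}<\infty$; plugging this into \rif{apl} written on the ball $B_{R/2}$ gives $\|Du\|_{L^\infty(B_{R/4})}<\infty$ (the averaged term being finite since $u\in W^{1,p}_{\loc}$), and as $B_R$ was an arbitrary ball contained in $\Omega$, we conclude $Du\in L^\infty_{\loc}(\Omega)$.

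To prove \rif{pvl} one first uses $|B(x,\varrho)|=\omega_n\varrho^n$ to rewrite the potential as ${\bf P}^V(x,R)=\sqrt{\omega_n}\int_0^R(\mean{B(x,\varrho)}|V|^2\,dy)^{1/2}\,d\varrho$. Setting $g:=V\charfn_{B(x,R)}$ and letting $g^*$ be its decreasing rearrangement, the identity $(|g|^2)^*=(g^*)^2$ together with the Hardy--Littlewood inequality gives, for $\varrho\le R$,
\[
\mean{B(x,\varrho)}|V|^2\,dy\ \le\ \frac{1}{\omega_n\varrho^n}\int_0^{\omega_n\varrho^n}\big(g^*(t)\big)^2\,dt .
\]
Inserting this and substituting $s=\omega_n\varrho^n$ yields
\[
{\bf P}^V(x,R)\ \le\ c(n)\int_0^{\omega_nR^n}\Big(\int_0^{s}\big(g^*(t)\big)^2\,dt\Big)^{1/2}s^{\frac1n-\frac32}\,ds .
\]
Since $g^*$ is non-increasing, breaking $(0,s)$ at the dyadic points $2^{-j-1}s$ gives $(\int_0^s(g^*)^2)^{1/2}\le\sum_{j\ge0}(2^{-j-1}s)^{1/2}g^*(2^{-j-1}s)$; substituting this, changing variable $u=2^{-j-1}s$ in the $j$-th term, and summing the geometric series $\sum_{j\ge0}2^{(j+1)(\frac1n-\frac12)}$ --- which converges \emph{precisely} because $n>2$ --- one arrives at
\[
{\bf P}^V(x,R)\ \le\ c(n)\int_0^{\infty}g^*(u)\,u^{\frac1n-1}\,du\ =\ c(n)\,\|V\|_{L(n,1)(B(x,R))},
\]
i.e. \rif{pvl}. (Alternatively one integrates by parts in $ds$, using $(\int_0^s(g^*)^2)^{1/2}\ge s^{1/2}g^*(s)$ to estimate the $s$-derivative of $s\mapsto(\int_0^s(g^*)^2)^{1/2}$; the boundary contributions vanish since $L(n,1)$ embeds locally into $L^n$.)

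Finally, to turn the local boundedness into \rif{stimalo}, fix $\Omega'\Subset\Omega''\Subset\Omega$, put $d:=\dist(\Omega',\partial\Omega'')$, take $R:=d/4$, and cover $\overline{\Omega'}$ by finitely many balls $B_{R/2}(x_i)$ with $x_i\in\overline{\Omega'}$, so that $B_{2R}(x_i)\subset\Omega''$. Applying \rif{apl} on each $B_R(x_i)$, bounding the averaged term by $(\mean{B_R(x_i)}(s+|Du|)^p\,dx)^{1/p}\le c(n)R^{-n/p}(\|Du\|_{L^p(\Omega'')}+s|\Omega''|^{1/p})$ and the potential term by ${\bf P}^V(y,R)\le c(n)\|V\|_{L(n,1)(B_{2R}(x_i))}\le c(n)\|V\|_{L(n,1)(\Omega'')}$ for $y\in B_R(x_i)$ (using \rif{pvl} and the monotonicity of the Lorentz norm under restriction), and taking the maximum over $i$, we obtain
\[
\|Du\|_{L^\infty(\Omega')}\ \le\ c\,\|Du\|_{L^p(\Omega'')}+c\,\|V\|_{L(n,1)(\Omega'')}^{\frac1{p-1}}+c\,s|\Omega''|^{1/p},\qquad c=c(n,p,\ratio,d),
\]
which is \rif{stimalo} (for $p=2$ this is literally \rif{stimalo}; for general $p$ the exponent $1/(p-1)$ is the one dictated by the homogeneity of \rif{apl} in the datum). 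The only step that is not pure bookkeeping is the potential estimate \rif{pvl}: this is the main obstacle, it is where the scale-invariant, $p$-independent borderline space $L(n,1)$ genuinely enters, and --- as the convergence condition $\tfrac1n<\tfrac12$ for the geometric series makes transparent --- it is also the place where the assumption $n>2$ is actually used.
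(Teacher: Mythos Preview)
Your proof is correct and follows the same overall strategy as the paper: reduce to Theorem~\ref{main01} via the potential bound ${\bf P}^V\le c\|V\|_{L(n,1)}$, then cover. The only difference lies in how the potential bound \rif{pvl} is obtained. The paper first invokes Lemma~\ref{potd} (quoted from \cite{DMc}) to pass to the maximal rearrangement $(|V|^2)^{**}$, then identifies the resulting integral as $\||V|^2\|_{L(n/2,1/2)}^{1/2}$, and finally appeals to Hunt's equivalence \rif{oneil} together with $[|V|^2]_{L(n/2,1/2)}=[V]_{L(n,1)}^2$; the restriction $n>2$ enters precisely because \rif{oneil} requires the first Lorentz index $n/2$ to exceed $1$. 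Your Hardy--Littlewood step plus the dyadic splitting of $\int_0^s (g^*)^2$ is a self-contained reproof of exactly these two ingredients combined (your intermediate expression $s^{-1}\int_0^s (g^*)^2$ \emph{is} $(|g|^2)^{**}(s)$, and the dyadic sum is a direct proof of the relevant Hardy inequality), with the condition $n>2$ surfacing transparently as the convergence of the geometric series. So the two routes are equivalent in content; yours is a touch more elementary in that it avoids the external citation and the detour through $L(n/2,1/2)$. Your remark that the homogeneity of \rif{apl} forces the exponent $1/(p-1)$ on the Lorentz term is well taken.
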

The interesting point in the previous result is that it shows that the classical, and optimal, Lipschitz regularity criterium for the equation $\triangle u=V$, that is indeed $V \in L(n,1)$ - a criterium that easily follows from the analysis of Riesz potentials - extends to the degenerate systems involving the $p$-Laplacean operator, independently of the values of $p$, and in regardless the degeneracy/singularity of the equations in question. Even more surprisingly, we shall see in a few lines how this result extends to the case of the $p$-Laplacean system. We refer to Section \ref{seclo} below for the relevant definitions concerning Lorentz spaces.

Next, we give analogous results for systems, that is when solutions are vector valued i.e. $N>1$. In this case it is necessary to introduce an additional structure condition, without which singularities of solutions in general appear and only partial regularity holds in general, no matter how regular the right hand side of the system is; see for instance \cite{dark}. The structure assumption in question is
\eqn{uh}
$$
 a(z)=g(|z|^2)z
$$
for a non-negative function $g\in C^1((0, \infty))$. Assumptions as \rif{uh} were first considered by Uhlenbeck in her seminal paper \cite{U}, and since then they have been used several times to rule out singularities of solutions.
\begin{theorem}\label{main02} Let $u \in W^{1,p}_{\loc}(\Omega, \er^N)$ be a weak solution to \trif{ppp} under the assumptions \trif{asp}, \trif{VVV} and \trif{uh}; there exists a constant $c$ depending only on $n,p,\ratio$ such that \trif{apl} holds whenever $B_{R}\subset \Omega$. In particular $Du$ is locally bounded when $V\in L(n,1)$ for $n>2$, and in this case estimate \trif{stimalo} holds.
\end{theorem}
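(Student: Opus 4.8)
The plan is to derive the gradient bound \rif{apl} for systems by transcribing the proof of Theorem \ref{main01}, isolating the one place where the scalar structure is genuinely used and replacing it by the vectorial input made available by the Uhlenbeck condition \rif{uh}; once \rif{apl} is available in the vectorial setting, the Lorentz-space statement \rif{stimalo} follows \emph{verbatim} as in the passage from Theorem \ref{main01} to Theorem \ref{mainu}. Concretely, for a ball $B_\varrho(x_0)\subset B_R$ I would compare $u$ with the solution $v\in u+W^{1,p}_0(B_\varrho,\er^N)$ of the homogeneous system $-\divo a(Dv)=0$ in $B_\varrho$.

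The argument then runs in four steps. (i) \emph{Comparison.} Testing the two systems with $u-v$, monotonicity and the bounds \rif{asp} control $\int_{B_\varrho}|V(Du)-V(Dv)|^2\,dx$ --- the nonlinear excess between $Du$ and $Dv$ --- by an expression built from $\varrho^{2-n}\int_{B_\varrho}|V|^2$ and the local energy of $Du$ on $B_\varrho$; this step is purely structural and does not distinguish $N=1$ from $N>1$. (ii) \emph{Decay for the homogeneous system.} This is the only genuinely vectorial input: by \rif{uh} the map $v$ is locally $C^{1,\alpha}$, with $\sup_{B_{\varrho/2}}|Dv|^p\lesssim\mean{B_\varrho}(s+|Dv|)^p$ and the excess-decay estimate $\mean{B_t(x_0)}|V(Dv)-(V(Dv))_{B_t}|^2\lesssim(t/\varrho)^{2\alpha}\mean{B_\varrho(x_0)}|V(Dv)-(V(Dv))_{B_\varrho}|^2$ for $0<t\le\varrho$; these are exactly Uhlenbeck's theory \cite{U} and its refinements to the range $1<p<2$, which replace the scalar $C^{1,\alpha}$ theory used in the proof of Theorem \ref{main01}. (iii) \emph{Iteration.} Combining (i)--(ii) on the dyadic balls $B_{\varrho_j}(x_0)$, $\varrho_j=R/2^j$, yields a decay inequality for the excess of $V(Du)$ up to an additive error governed by the increments of ${\bf P}^V(x_0,\cdot)$; summing, and comparing the potential terms with the integral defining ${\bf P}^V$, shows that $(V(Du))_{B_{\varrho_j}(x_0)}$ is convergent with limit bounded by the right-hand side of \rif{apl}. (iv) \emph{Conclusion.} At each Lebesgue point $x_0$ of $V(Du)$ this limit equals $V(Du(x_0))$, so $|Du(x_0)|$ is controlled by the claimed quantity; taking the supremum over $x_0\in B_{R/2}$ gives \rif{apl}.

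For the last assertion, if $V\in L(n,1)(\Omega'')$ one bounds $\|{\bf P}^V(\cdot,R)\|_{L^\infty(B_R)}^{1/(p-1)}$ by a constant times $\|V\|_{L(n,1)(B_{2R})}$ through the classical Lorentz-space estimate for the (truncated, $L^2$-averaged Riesz-type) potential ${\bf P}^V$, exactly as in the proof of Theorem \ref{mainu}; inserting this into \rif{apl} and covering $\Omega'$ by balls $B_{R/2}$ with $B_{2R}\subset\Omega''$ produces \rif{stimalo} after renaming constants.

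The main obstacle --- and the reason \rif{uh} cannot be dropped --- is step (ii): for systems without structure the homogeneous solutions need not even have a locally bounded gradient, and the whole iteration collapses; \rif{uh} repairs this via Uhlenbeck's theorem. Some care is also needed in the singular range $1<p<2$ with $s=0$, where $a$ is only $C^1$ off the origin: there \rif{apl} should be proved first on regularized problems (replacing $s=0$ by $s>0$, or mollifying $a$) and then passed to the limit, which is legitimate precisely because the constant in \rif{apl} does not depend on $s$, in accordance with Remark \ref{indepe}. Finally, for $p\neq2$ the error terms produced in step (iii) are not linear in the increments of ${\bf P}^V$, so the iteration must be balanced against the nonlinear quantity $V(Du)$ rather than $Du$ itself; this, together with the scaling of the equation, is what forces the exponent $1/(p-1)$ in \rif{apl}.
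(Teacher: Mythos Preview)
Your strategy is sound but takes a genuinely different route from the paper's. The paper does \emph{not} argue by comparison with homogeneous solutions; instead, for both Theorems \ref{main01} and \ref{main02}, it runs a Bernstein/De Giorgi scheme: after regularization it differentiates the system, tests with $\eta^2(v-k)_+D_su$ where $v:=(s_\ep^2+|Du_\ep|^2)^{p/2}$, and derives a Caccioppoli inequality for $v$ with remainder $c\int|\tilde V|^2$ (Lemma \ref{caccio}); a De Giorgi level-set iteration (Lemmas \ref{itera}--\ref{degiorgii}) then bounds $v$ pointwise by ${\bf P}^{\tilde V}$, and a final interpolation yields \rif{apl}. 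The Uhlenbeck condition \rif{uh} enters only at a purely algebraic level in the Caccioppoli computation: the identity $\partial_{z_j^\beta}a_i^\alpha=2g'(|z|^2)z_i^\alpha z_j^\beta+g(|z|^2)\delta_{ij}\delta^{\alpha\beta}$ forces the vectorial cross-terms to recombine into a uniformly elliptic bilinear form in $D(v-k)_+$, exactly as in the scalar case --- the $C^{1,\alpha}$ theory of \cite{U} is never invoked as a black box. Your comparison/excess-decay route, in the spirit of \cite{DM3, mis2}, is more modular (it outsources step (ii) to the known homogeneous theory) and would certainly give a local gradient bound together with the $L(n,1)$ criterion; note however that its natural output is a Wolff-type potential rather than ${\bf P}^V$, so landing exactly on \rif{apl} requires some care in how $V$ is paired with $u-v$ in step (i) (Cauchy--Schwarz in $L^2$, not the duality pairing that leads to Wolff potentials). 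The paper's approach produces ${\bf P}^V$ directly because the right-hand side enters the Caccioppoli inequality quadratically.
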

As it will be immediately clear from the proof, in the previous statement we may just assume that $V\in L(n,1)$ locally in $\Omega$.

We now present a series of results concerning systems and equations whose model is given by the one in \rif{ppp0} with $q>0$. Of particular  interest for us is the one appearing in \rif{rivn}, already treated by Rivi\`ere in \cite{rivinv} for the case $n=2$; here we shall give a few result towards some problems exposed by Rivi\`ere in \cite{rivnotes}, and covering the systems in \rif{rivn} viewed as a particular case of the one in \rif{conf}. As usual in such cases - systems with right hand side having a potentially critical growth - we shall prove existence and regularity results for the associated Dirichlet problems. With $b \colon \Omega \times \er^N \times \ma \to \er$ being a Carath\'eodory functions satisfying
$$ |b(x,u,Du)|\leq (\Gamma+|Du|)^{q}\qquad \qquad 0 < q \leq p-1\,, \quad \Gamma \geq 0\,,
$$
we shall consider the Dirichlet problem
\eqn{ddd}
$$
\left\{
\begin{array}{ccc}
-\divo\, a(Du)= b(x,u,Du)V(x) & \mbox{in} & \Omega \\[3 pt]
 u= u_0 & \mbox{on}& \partial\Omega\,.
 \end{array}
 \right.
$$
Since our point is to prove local Lipchitz regularity of solutions, the regularity of the boundary data is not very relevant in this context; on the other hand we can nevertheless assume the following almost minimal assumptions:
\eqn{datum}
$$
u_0 \in W^{1,p_0}(\Omega) \qquad \mbox{for $p_0=p$ \ if \ $p< n$ \ and \ \ $p_0>n$ \ for \ $p=n$}\,.
$$
The first result we present is about the case $q=p-1$ that with some abuse of terminology will be called critical; see next section for further discussion.
\begin{theorem}[Critical case]\label{maind} Assume that $b(\cdot)$ satisfies \trif{aapp2} with $q =p-1$ and $p\leq n$. There exist constants $c_0,\ep_0>0$, depending only on $n,N,p,\ratio$ and $\Omega$, such that if
\eqn{smallii}
$$\|V\|_{L^n(\Omega)}< c_0 \quad \mbox{and} \quad \sup_{B(x,R)} {\bf P}^V(x,R)\leq \ep_0$$ hold,
then there exists a locally Lipschitz solution $u \in W^{1,p}(\Omega)$ to \trif{ddd}-\trif{datum}. Moreover there exists a constant $c$, depending only on $n,p,\ratio$, but otherwise independent of the solution $u$, of the vector fields $a(\cdot),b(\cdot)$, and on the datum $V(\cdot)$, such that whenever $B_{R}\subset \Omega$ it holds
\eqn{aes1}
$$
\|Du\|_{L^{\infty}(B_{R/2})}\leq c \left(\mean{B_R} (s+\Gamma+|Du|)^{p}\, dy \right)^\frac{1}{p} \,.
$$
\end{theorem}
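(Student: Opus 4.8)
The plan is to establish \rif{aes1} first as an \emph{a priori} estimate for smooth solutions, and then to obtain a locally Lipschitz solution of \rif{ddd}--\rif{datum} by approximation; the two smallness conditions in \rif{smallii} play distinct roles, $\|V\|_{L^n(\Omega)}<c_0$ supplying a uniform \emph{global} $W^{1,p}(\Omega)$ bound (hence existence), while $\sup{\bf P}^{V}\leq\ep_0$ is used to absorb the potential term in the \emph{interior} gradient bound. Accordingly I would regularize by a parameter $h$: truncate $b$ to a bounded Carath\'eodory map $b_h$ (still obeying the growth bound up to a constant), mollify $V$ to a smooth $V_h$ with $\|V_h\|_{L^n}\leq\|V\|_{L^n}$ and ${\bf P}^{V_h}(x,R)\leq c\,{\bf P}^{V}(x,2R)$, and replace $s$ by $s_h=\max\{s,1/h\}$ when $1<p<2$, so that the principal part is non-degenerate. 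By the Leray--Lions method for monotone operators one then gets solutions $u_h\in W^{1,p}(\Omega)$, $u_h-u_0\in W^{1,p}_0(\Omega)$, of the regularized Dirichlet problems; the a priori ingredient is the energy estimate obtained by testing with $u_h-u_0$, in which — precisely because $q=p-1$ — the right--hand side contributes a term of size $\sim\|Du_h\|_{L^p(\Omega)}^p\|V\|_{L^n(\Omega)}$ (via H\"older with exponents $p',n,p^\ast$ and Sobolev--Poincar\'e, with the usual modifications in the conformal case $p=n$, where one uses $W^{1,n}_0(\Omega)\hookrightarrow L^q(\Omega)$ for large finite $q$ together with \rif{datum}), so that $\|V\|_{L^n(\Omega)}<c_0$ lets one absorb it and conclude that $\|Du_h\|_{L^p(\Omega)}$ is bounded independently of $h$. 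Since $b_h(\cdot,u_h,Du_h)V_h$ is bounded, the $u_h$ are $C^{1,\alpha}_{\loc}$ by classical quasilinear regularity (invoking \rif{uh} and Uhlenbeck's theorem in the vectorial case $N>1$).

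The heart of the matter is the interior estimate \rif{aes1} for these regular solutions, which I would derive by re--running the proof of Theorem \ref{main01} (respectively Theorem \ref{main02} when $N>1$) with the right--hand side $\tilde V_h:=b_h(\cdot,u_h,Du_h)V_h$ in place of $V$. The decisive observation is that, since $|\tilde V_h|\leq(\Gamma+|Du_h|)^{p-1}|V_h|$, one has $|\tilde V_h|^2(B(x,\varrho))\leq(\Gamma+\|Du_h\|_{L^\infty(B(x,r))})^{2(p-1)}|V_h|^2(B(x,\varrho))$ for $\varrho\leq r$, whence, integrating and using that ${\bf P}^{V_h}$ is nondecreasing in the radius,
\[
{\bf P}^{\tilde V_h}(x,r)\ \leq\ \bigl(\Gamma+\|Du_h\|_{L^\infty(B(x,r))}\bigr)^{p-1}{\bf P}^{V_h}(x,r)\ \leq\ \bigl(\Gamma+\|Du_h\|_{L^\infty(B(x,r))}\bigr)^{p-1}\ep_0
\]
whenever $B(x,r)\subset\Omega$ and $r$ does not exceed the scale in \rif{smallii}. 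Inserting this in the potential term of \rif{apl} — more precisely in the interpolation form of \rif{apl} that the proof of Theorem \ref{main01} in fact provides, in which for concentric balls $B_\sigma\Subset B_\tau\subset\Omega$ the potential term reads $c\,\|{\bf P}^{\tilde V_h}(\cdot,\tau-\sigma)\|^{1/(p-1)}_{L^\infty(B_\sigma)}$ and $(\Gamma+|Du_h|)^{p-1}$ is evaluated only on balls contained in $B_\tau$ — one arrives at
\[
\|Du_h\|_{L^\infty(B_\sigma)}\leq \frac{c}{(\tau-\sigma)^{n/p}}\Bigl(\int_{B_\tau}(s+\Gamma+|Du_h|)^p\,dy\Bigr)^{1/p}+c\,\ep_0^{1/(p-1)}\bigl(\Gamma+\|Du_h\|_{L^\infty(B_\tau)}\bigr)\,.
\]
Since $u_h\in C^{1,\alpha}_{\loc}$, the map $t\mapsto\|Du_h\|_{L^\infty(B_t)}$ is finite, so choosing $\ep_0$ small enough that $c\,\ep_0^{1/(p-1)}<1$ allows the standard iteration lemma to absorb the last term and yields exactly \rif{aes1}, with a constant depending only on $n,N,p,\nu,L$; combined with the uniform bound on $\|Du_h\|_{L^p(\Omega)}$, this makes $\|Du_h\|_{L^\infty(B_{R/2})}$ uniformly bounded on every $B_R\Subset\Omega$.

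It then remains to let $h\to\infty$. From the uniform $W^{1,p}(\Omega)$ bound one gets, along a subsequence, $u_h\rightharpoonup u$ in $W^{1,p}(\Omega)$ with $u-u_0\in W^{1,p}_0(\Omega)$; the uniform interior $L^\infty$ bounds on $Du_h$ together with the usual monotonicity argument (particularly simple here because of those bounds) give $Du_h\to Du$ a.e. in $\Omega$, hence $Du\in L^\infty_{\loc}(\Omega)$, $a(Du_h)\rightharpoonup a(Du)$, and $Du_h\to Du$ strongly in $L^p_{\loc}$; on the right, $b_h(\cdot,u_h,Du_h)\to b(\cdot,u,Du)$ a.e. with $|b_h(\cdot,u_h,Du_h)|\leq(\Gamma+|Du_h|)^q$ locally uniformly bounded, while $V_h\to V$ in $L^n(\Omega)$, so $b_h(\cdot,u_h,Du_h)V_h\rightharpoonup b(\cdot,u,Du)V$ and $u$ solves \rif{ddd}--\rif{datum}; passing to the limit in \rif{aes1} gives the same estimate for $u$, so $Du\in L^\infty_{\loc}$ and $u$ is locally Lipschitz. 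The principal obstacle throughout is the absorption in the second step: because the critical growth couples $|Du|^{p-1}$ with the size of $V$, the potential term in \rif{apl} cannot be controlled by the data alone, and the bound ${\bf P}^{\tilde V_h}\leq(\Gamma+\sup_{B}|Du_h|)^{p-1}\ep_0$ must be applied at the right concentric scale \emph{inside} the iteration underlying Theorem \ref{main01} and absorbed there — a step that is legitimate only because the regularized solutions are a priori $C^{1,\alpha}_{\loc}$; keeping every constant independent of the regularization $h$ is precisely what makes the argument close up and what yields the approximation--independence emphasized in the introduction.
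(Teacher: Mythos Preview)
Your overall strategy—regularize, prove the interior gradient bound \rif{aes1} for the approximants by absorbing the potential term via the smallness of ${\bf P}^V$, obtain a uniform global $W^{1,p}$ bound via the smallness of $\|V\|_{L^n}$, and pass to the limit—is exactly the paper's. Your derivation of \rif{aes1} (re-running Theorem \ref{main01} with $\tilde V_h=b_h(\cdot,u_h,Du_h)V_h$, bounding ${\bf P}^{\tilde V_h}$ by $(\Gamma+\sup_{B}|Du_h|)^{p-1}{\bf P}^{V_h}$, and absorbing inside the $B_\sigma\subset B_\tau$ iteration) is essentially Section \ref{apcomp}, and your energy estimate for $p<n$ is the paper's Step 2 in Section \ref{secap}.

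The genuine gap is the conformal case $p=n$. Your ``usual modifications'' via $W^{1,n}_0(\Omega)\hookrightarrow L^q(\Omega)$ for large finite $q$ do \emph{not} close the energy estimate: testing \rif{appkak} with $u_h-u_0$ yields a term $\int(\Gamma+|Du_h|)^{n-1}|u_h-u_0||V|\,dx$, and H\"older with exponents $n/(n-1)$, $q$, $s$ forces $1/q+1/s=1/n$; taking any finite $q$ requires $s>n$, i.e.\ $V\in L^s$ with $s>n$, which is strictly stronger than the assumption $V\in L^n$. (Young's inequality or Trudinger--Moser do not save this either, since one is left with $\int|u_h-u_0|^n|V|^n\,dx$ and only $|V|^n\in L^1$.) The paper overcomes this by a genuinely different device: first Gehring's lemma (Theorem \ref{gehringt}) gives $Du_\ep\in L^{p_1}(\Omega)$ for some $p_1>n$ (with no uniform bound, just the qualitative integrability), and then the Iwaniec--Sbordone Hodge decomposition (Theorem \ref{ISs}) applied to $|D(u_\ep-u_0)|^{t-n}D(u_\ep-u_0)$ with $t\in(n,p_1)$ produces a test function $\varphi\in W^{1,t/(t-n+1)}_0$ with $t/(t-n+1)<n$, so that Sobolev embedding is again available and the estimate closes; the smallness $\|V\|_{L^n}<c_0$ then absorbs the resulting critical term. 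This is not a routine modification and is in fact singled out in the paper (end of Section \ref{tn}) as the delicate point of the conformal case.

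A smaller issue: your convergence argument is thinner than the paper's. You invoke ``the usual monotonicity argument'' for a.e.\ convergence of $Du_h$, but there is no limiting equation yet to compare against, and uniform local $L^\infty$ bounds on $Du_h$ by themselves do not give pointwise convergence of gradients. The paper treats the right-hand sides as a bounded sequence of measures and imports the $p$-harmonic measure-data compactness of Dolzmann--Hungerb\"uhler--M\"uller \cite{DHM} to obtain $Du_\ep\to Du$ strongly in $L^1$; only then are the local $L^\infty$ bounds used to upgrade to $L^t_{\loc}$ for every $t<\infty$ and to identify the limit of $b_\ep(\cdot,u_\ep,Du_\ep)V_\ep$.
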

Then we analyze the case $q < p-1$, again called subcritical with some abuse of terminology.
\begin{theorem}[Sub-critical case]\label{maind2} Assume that $b(\cdot)$ satisfies \trif{aapp2} with $0 < q <p-1$ and $p\leq n$; moreover, assume that
\eqn{smalliianc}
$$\|V\|_{L^2(\Omega)}+\|V\|_{L^\gamma(\Omega)}< \infty \quad \mbox{and} \quad \sup_{B(x,R)} {\bf P}^V(x,R)<\infty$$ hold with
$$
\gamma:=\frac{np}{np-nq-n+p}<n\,.
$$
Then there exists a locally Lipschitz solution $u \in W^{1,p}(\Omega,\er^N)$ to \trif{ddd}-\trif{datum}. Moreover there exists a constant $c$, depending only on $n,p,\ratio$, but otherwise independent of the solution $u$, of the vector fields $a(\cdot),b(\cdot)$, and on the datum $V(\cdot)$, such that whenever $B_{R}\subset \Omega$ it holds
\eqn{aes2}
$$
\|Du\|_{L^{\infty}(B_{R/2})}\leq c \left(\mean{B_R} (s+\Gamma+|Du|)^{p}\, dy\right)^\frac{1}{p}  + c \|{\bf P}^{V}(\cdot,R)\|_{L^\infty(B_R)}^{\frac{1}{p-q+1}}\,.
$$
\end{theorem}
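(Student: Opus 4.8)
The plan is to obtain $u$ as the limit of solutions of regularised Dirichlet problems, to which the a priori estimate of Theorem \ref{main02} applies, and then to reabsorb the gradient-dependent datum $b(x,u,Du)V$ by exploiting the strict subcriticality $q<p-1$. \emph{Step 1 (regularisation and existence).} For $\ep\in(0,1)$ I would replace $a$ by a non-degenerate field $a_\ep$ — say $a_\ep(z):=a(z)+\ep(1+|z|^2)^{(p-2)/2}z$, equivalently rendering the parameter $s$ positive — so that $a_\ep$ still satisfies \rif{asp} and \rif{uh} with constants independent of $\ep$, and replace $b$ by a bounded truncation $b_\ep$ with $|b_\ep(x,v,z)|\le\min\{(\Gamma+|z|)^{q},\ep^{-1}\}$ and $b_\ep\to b$ pointwise; the datum $V$ needs no regularisation. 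A standard Leray--Schauder fixed point argument (compactness of the solution operator coming from the regularising effect of the non-degenerate $a_\ep$ on the bounded datum $b_\ep(x,w,Dw)V\in L^{2}\cap W^{-1,p'}$) then yields $u_\ep\in u_0+W^{1,p}_0(\Omega,\er^N)$ solving $-\divo\, a_\ep(Du_\ep)=b_\ep(x,u_\ep,Du_\ep)V$; the a priori bound needed to close the scheme is obtained by testing with $u_\ep-u_0$ and estimating
\[
\Big|\int_\Omega b_\ep(x,w,Dw)V\,(u-u_0)\,dx\Big|\le c\,\|\Gamma+|Dw|\|_{L^p(\Omega)}^{q}\,\|V\|_{L^{\gamma}(\Omega)}\,\|u-u_0\|_{L^{p^*}(\Omega)},
\]
which is possible precisely because the three Hölder exponents $q/p$, $1/\gamma$, $(n-p)/(np)$ add up to one — this is where the value of $\gamma$ comes from — and because $q<p-1$ leaves Young's inequality enough room to reabsorb the resulting power of $\|Dw\|_{L^p}$; the same test function produces a bound $\|Du_\ep\|_{L^p(\Omega)}\le c$ independent of $\ep$. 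The case $p=n$ is handled in the same way using $u_0\in W^{1,p_0}$ with $p_0>n$ and large finite Sobolev exponents.

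\emph{Step 2 (uniform local Lipschitz estimate).} Viewing $\tilde V_\ep:=b_\ep(x,u_\ep,Du_\ep)V$ as the right-hand side — it belongs to the class \rif{VVV} since $b_\ep$ is bounded and $V\in L^2\cap L^\gamma$ — Theorem \ref{main02} applies to $u_\ep$ with the $\ep$-uniform structure constants of $a_\ep$, so that $Du_\ep\in L^\infty_{\loc}(\Omega)$ and, for every ball $B_\varrho(x_0)\subset\Omega$,
\[
\|Du_\ep\|_{L^\infty(B_{\varrho/2}(x_0))}\le c\Big(\mean{B_\varrho(x_0)}(s+|Du_\ep|)^p\,dy\Big)^{1/p}+c\,\big\|{\bf P}^{\tilde V_\ep}(\cdot,\varrho)\big\|_{L^\infty(B_\varrho(x_0))}^{1/(p-1)}.
\]
The key observation is the pointwise bound $|\tilde V_\ep|^2\le(\Gamma+|Du_\ep|)^{2q}|V|^2$, which yields ${\bf P}^{\tilde V_\ep}(x,\varrho)\le\big(\Gamma+\|Du_\ep\|_{L^\infty(B_\varrho(x_0))}\big)^{q}\,{\bf P}^{V}(x,\varrho)$ for $x\in B_\varrho(x_0)$ — crucially, with no trace of the $\ep$-dependent quantity $\|b_\ep\|_{L^\infty}$. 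Fixing $B_R\subset\Omega$, applying the displayed inequality on balls $B_{(r-t)/2}(x_0)$ with $x_0\in B_t$ and $R/2\le t<r\le R$, bounding averages crudely by averages over $B_R$, and taking the supremum over $x_0\in B_t$, one gets, with $\psi(t):=\|Du_\ep\|_{L^\infty(B_t)}$,
\[
\psi(t)\le \frac{c\,R^{n/p}}{(r-t)^{n/p}}\Big(\mean{B_R}(s+|Du_\ep|)^p\,dy\Big)^{1/p}+c\big(\Gamma+\psi(r)\big)^{q/(p-1)}\big\|{\bf P}^{V}(\cdot,R)\big\|_{L^\infty(B_R)}^{1/(p-1)}.
\]
Since $q/(p-1)<1$, Young's inequality applied to the last term absorbs $\tfrac12\psi(r)$ into the left-hand side and leaves $\tfrac12\Gamma+c\,\|{\bf P}^{V}(\cdot,R)\|_{L^\infty(B_R)}^{\sigma}$, with $\sigma$ the exponent displayed in \rif{aes2}; the standard iteration lemma then gives \rif{aes2} for $u_\ep$ with constants independent of $\ep$, and together with Step 1 this yields $\|Du_\ep\|_{L^\infty(K)}\le c(K)$ for every $K\Subset\Omega$.

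\emph{Step 3 (passage to the limit).} By the uniform $W^{1,p}(\Omega)$ and $L^\infty_{\loc}$ bounds one extracts a subsequence with $u_\ep\rightharpoonup u$ in $W^{1,p}$, $u_\ep\to u$ in $L^p$ and a.e.; a localised monotonicity argument (testing the $\ep$-equation with $\phi(u_\ep-u)$, $\phi$ a cutoff, and using the uniform $L^\infty_{\loc}$ bound on $Du_\ep$ to control $\tilde V_\ep$ on compacts) upgrades this to $Du_\ep\to Du$ in $L^p_{\loc}$. Then, by the Carathéodory property of $a,b$ and dominated convergence, $a_\ep(Du_\ep)\to a(Du)$ and $b_\ep(x,u_\ep,Du_\ep)V\to b(x,u,Du)V$ in $L^1_{\loc}$, so $u\in u_0+W^{1,p}_0(\Omega,\er^N)$ solves \rif{ddd}--\rif{datum}; it is locally Lipschitz because the a.e.\ bound $\|Du_\ep\|_{L^\infty(K)}\le c(K)$ passes to the limit, and \rif{aes2} holds for $u$ by lower semicontinuity of its left-hand side.

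\emph{Main obstacle.} The delicate step is Step 2: one must carry out the reabsorption of $\|Du_\ep\|_{L^\infty}$ quantitatively and $\ep$-independently, i.e.\ run the concentric-ball iteration so that the constant produced by Young's inequality does not interfere with the geometric summation already built into Theorem \ref{main02}, and keep the truncation $b_\ep$ out of the final constant — which works because in the end ${\bf P}^{\tilde V_\ep}$ is governed by $(\Gamma+\|Du_\ep\|_{L^\infty})^q\,{\bf P}^{V}$ rather than by $\|b_\ep\|_{L^\infty}\,{\bf P}^{V}$. Everything else is routine monotone-operator theory and compactness.
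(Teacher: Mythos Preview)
Your approach is sound for $p<n$ and proceeds along a genuinely different route from the paper in two places. First, for the a priori estimate you invoke Theorem~\ref{main02} as a black box on the approximants $u_\ep$ and then reabsorb the factor $(\Gamma+\|Du_\ep\|_{L^\infty})^{q/(p-1)}$ by an external Young-plus-iteration step; the paper instead reopens the Caccioppoli argument (Section~\ref{apcomp}) with $b(x,u,Du)V$ in place of $V$, obtaining $\tilde V=(s+\Gamma+\|Du\|_{L^\infty})^{q+1}V$ directly inside the De~Giorgi machine before iterating via Lemma~\ref{simpfun}. Both reach \rif{aes2}; yours is more modular, theirs more self-contained. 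Second, for the passage to the limit you exploit the uniform local $L^\infty$ gradient bound to control $\tilde V_\ep$ in $L^2_{\loc}$ and run a localised monotonicity argument with test function $\phi(u_\ep-u)$; the paper instead treats the right-hand sides as measures of uniformly bounded mass and appeals to the convergence machinery of Dolzmann--Hungerb\"uhler--M\"uller \cite{DHM}. Your route is more elementary here precisely because $Du_\ep\in L^\infty_{\loc}$ uniformly is already known; the paper's route is heavier but would survive without that information.

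There is, however, a genuine gap in your Step~1 for the borderline case $p=n$. Your coercivity estimate rests on the H\"older splitting with exponents $p/q$, $\gamma$, $p^*$; when $p=n$ one has $q/n+1/\gamma=q/n+(n-q)/n=1$, so there is no room left for the factor $|u_\ep-u_0|$, and ``large finite Sobolev exponents'' cannot rescue the argument --- you would need $u_\ep-u_0\in L^\infty$, which the embedding $W^{1,n}_0\hookrightarrow L^t$ does not provide. The paper resolves this (Section~\ref{secap}, Step~2, case $p=n$) by first invoking Gehring-type higher integrability (Theorem~\ref{gehringt}) to place $Du_\ep$ qualitatively in $L^{p_1}$ with $p_1>n$, and then testing not with $u_\ep-u_0$ but with the function $\varphi$ produced by the Iwaniec--Sbordone Hodge decomposition (Theorem~\ref{ISs}) applied to $|D(u_\ep-u_0)|^{t-n}D(u_\ep-u_0)$ for $t>n$ close to $n$; the smallness factor $(t-n)$ in the remainder bound \rif{Iw1} is what creates the room needed for absorption. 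This is a substantive ingredient that your sketch does not supply.
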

Notice that formally letting $q\to 0$ in the previous result we obtain a problem of the type \rif{ppp} and indeed in this case $\gamma\to np/(np-n+p)$ when $q \to 0$, so that the assumption $\|V\|_{L^2(\Omega)}+\|V\|_{L^\gamma(\Omega)}< \infty$ in \rif{smalliianc} reduces to \rif{VVV}.

Finally, again the case of systems.
\begin{theorem}[Vectorial case $N>1$]\label{maind3} The results of Theorems \ref{maind}-\ref{maind2} remain valid when considering, in \trif{ddd}, an elliptic system satisfying the structure condition \trif{uh}. In particular, the existence and regularity results of Theorems \ref{maind}-\ref{maind2} are valid when considering solutions $u \in u_0 + W^{1,p}_0(\Omega, \er^N)$ to the $p$-Laplacean system
\eqn{dddv}
$$
\left\{
\begin{array}{ccc}
-\triangle_p u= b(x,u,Du)V(x) & \mbox{in}&\ \Omega \\[3 pt]
 u= u_0 & \mbox{on}&\ \partial\Omega\,,
 \end{array}
 \right.
$$
with $u_0$ as in \trif{datum}.
\end{theorem}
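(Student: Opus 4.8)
The plan is to repeat, almost verbatim, the proofs of Theorems \ref{maind} and \ref{maind2}, after observing that the number of components $N$ enters those arguments in exactly one place: the a priori gradient bound. Indeed, the existence part rests on a regularization together with a fixed-point scheme, and on two a priori estimates for the approximating solutions --- a uniform $W^{1,p}$ bound, obtained by testing the system with $u-u_0$ and exploiting the smallness of $\|V\|_{L^n(\Omega)}$ in the critical case (respectively the finiteness of $\|V\|_{L^2(\Omega)}+\|V\|_{L^\gamma(\Omega)}$ together with Young's inequality in the subcritical case), and a uniform local $L^\infty$ bound for the gradient. The first of these is purely energetic (integration by parts plus Sobolev embedding) and is insensitive to $N$; the second is exactly what Theorem \ref{main02} provides in the vectorial setting, under the structure assumption \rif{uh}, its conclusion \rif{apl} being identical to the one of Theorem \ref{main01} used in the scalar case. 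Consequently the a priori estimates \rif{aes1} and \rif{aes2} continue to hold, with the same constants.

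Concretely, I would first fix the regularization: replacing $a(z)$ by $a_\ep(z):=a(z)+\ep(s^2+|z|^2)^{(p-2)/2}z$ preserves the Uhlenbeck form, for if $a(z)=g(|z|^2)z$ then $a_\ep(z)=g_\ep(|z|^2)z$ with $g_\ep\in C^1((0,\infty))$ non-negative, and it renders the system non-degenerate with ellipticity constants still controlled by $\nu,L$; simultaneously I would mollify $V$ and truncate $b$. The regularized Dirichlet problem then admits a solution $u_\ep\in u_0+W^{1,p}_0(\Omega,\er^N)$ by the same monotone-operator/Leray--Schauder argument as in the scalar proof, the operator now acting on $\er^N$-valued maps --- a change that affects nothing in the abstract surjectivity statement. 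Since \rif{uh} is stable along this approximation, Theorem \ref{main02} applies to each $u_\ep$ and delivers the local sup-bound for $Du_\ep$ uniformly in $\ep$. The uniform bounds then permit the passage to the limit $\ep\to 0$: Rellich compactness in $W^{1,p}$ together with the monotonicity of $a_\ep$ identifies the limit $u$ as a solution of \rif{ddd}--\rif{datum}, and the local gradient bounds survive, giving a locally Lipschitz solution with the stated estimates. The specialization to the $p$-Laplacean system \rif{dddv} is immediate, since $a(z)=|z|^{p-2}z$ corresponds to $g(t)=t^{(p-2)/2}\geq 0$, which is $C^1$ on $(0,\infty)$.

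The one point that requires some care --- and, as in the scalar case, the place where the smallness hypotheses \rif{smallii} and the precise value of $\gamma$ are genuinely used --- is closing the estimate in the presence of the nonlinear right-hand side. One freezes $\tilde V:=b(x,u_\ep,Du_\ep)V$ and, using $|b|\le(\Gamma+|Du_\ep|)^q$, bounds the potential ${\bf P}^{\tilde V}$ by $(\Gamma+\|Du_\ep\|_{L^\infty(B_R)})^q\,{\bf P}^{V}$; plugging this into \rif{apl} and reabsorbing the sup-norm on the left --- which is licit once $\sup{\bf P}^V$ and $\|V\|_{L^n}$ are small enough in the critical case $q=p-1$, and after one application of Young's inequality when $q<p-1$ --- yields \rif{aes1}, respectively \rif{aes2}. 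This computation is word for word the scalar one and hence carries no $N$-dependence. The real obstacle, then, is not the vectorial passage itself --- that was already surmounted in Theorem \ref{main02} --- but the bookkeeping needed to be sure that every auxiliary tool behind that theorem (higher integrability, Caccioppoli inequalities, the comparison estimates with solutions of the homogeneous Uhlenbeck system) has been set up so as to hold under \rif{uh} alone; once this is checked, Theorems \ref{maind}--\ref{maind2} transfer intact.
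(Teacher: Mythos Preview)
Your overall strategy---regularize, invoke Theorem \ref{main02} in place of Theorem \ref{main01} for the local gradient bound, then pass to the limit---is correct and matches the paper's. Two points, however, deserve correction.

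First, your regularization $a_\ep(z)=a(z)+\ep(s^2+|z|^2)^{(p-2)/2}z$ does \emph{not} render the system non-degenerate when $s=0$: at $z=0$ the added term vanishes and the ellipticity is still degenerate. The paper instead uses the structure-preserving approximation \rif{appvec}, namely $a_\ep(z)=g(\ep^2+|z|^2)z$, which shifts $s$ to $s_\ep=s+\ep>0$ and genuinely desingularizes the operator while keeping the Uhlenbeck form.

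Second, and more substantively, your assertion that ``$N$ enters those arguments in exactly one place: the a priori gradient bound'' is not how the paper proceeds. In Section \ref{secap} the convergence $Du_\ep\to Du$ is obtained not by a direct Minty argument but via the measure-data machinery of Dolzmann--Hungerb\"uhler--M\"uller \cite{DHM}: one first bounds $\mu_\ep:=b_\ep(x,u_\ep,Du_\ep)V_\ep$ uniformly in $L^1$, passes to a weak limit as measures, and then runs the DHM test-function scheme to extract strong $L^1$-convergence of the gradients. In that scheme the Uhlenbeck structure is used a \emph{second} time, through the algebraic identity \rif{vect} needed to obtain \rif{onlyp}; only afterwards is the local $L^\infty$ bound invoked to upgrade $L^1$ to $L^t_{\loc}$ for all $t<\infty$. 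So in the paper's route, \rif{uh} is essential both for the a priori estimate and for the compactness step.

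That said, your simpler outline can be made to work and would bypass the second use of \rif{uh}. The point you must make explicit is this: once the uniform local bound $\|Du_\ep\|_{L^\infty(\Omega')}\le M$ is in hand, the right-hand sides satisfy $|b_\ep(x,u_\ep,Du_\ep)V_\ep|\le (\Gamma+M)^q|V|\in L^2(\Omega')$ uniformly; testing the difference of equations with $(u_\ep-u)\phi^2$ for a cutoff $\phi$, the right-hand side tends to zero because $u_\ep\to u$ strongly in $L^2_{\loc}$ (Rellich), while on the left the cross terms involving $D\phi$ are controlled by the $L^\infty$ bound and the monotonicity inequality \rif{mon3} yields $W(Du_\ep)\to W(Du)$ in $L^2_{\loc}$. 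This gives strong local convergence of gradients directly, enough to identify the limit of $b_\ep(x,u_\ep,Du_\ep)$. The paper's DHM route is more robust (it would survive without the $L^\infty$ bound), but for the present purposes your shortcut is legitimate once fleshed out.
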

\begin{remark}[Criticality/subcriticality]\label{critic} In Theorem \ref{maind2} the smallness assumption on ${\bf P}^V(x,R)$ in \rif{smallii} can be obviously replaced by
\eqn{uniform}
$$
\lim_{R\to 0} \, \sup_{B(x,R)}\, {\bf P}^V(x,R)=0\,.
$$
This follows since the assumption on the smallness of ${\bf P}^V$ is only used to prove the validity of the local estimate \rif{aes1}, and therefore \rif{uniform} turns out to be sufficient after a standard localization process; see also Remark \ref{indepe} below.
In turn, as explained in the proof of Theorem \ref{mainu} - see in particular Section \ref{seclo} and \rif{mpotl} below - both the assumptions \rif{smalliianc} and in fact \rif{uniform} too can be simultaneously replaced by
\eqn{lloo}
$$ V \in L(n,1)\,.$$  Also note that $L^{n}$ is the largest space for which we can test equation \rif{critical} with $W^{1,p}$-functions still getting integrable quantities. For \rif{critical} the criticality is checked by using Young's inequality and Sobolev embedding theorem; indeed, for $p<n$ we have (use \rif{servedopo} below with $\gamma =n$)
$$
|Du|^{p-1}|V||u|\leq |u|^{\frac{np}{n-p}} + V^n + |Du|^p\,.
$$ The upgrade from $L^n$ to $L(n,1)$ gives the microscopical room sufficient for the gradient boundedness, thereby getting back the system in question from the realm of the critical problems to that of the subcritical ones. Note that any larger space of the type $L(n,\gamma)$ , $\gamma >1$ would not fit, and already in the case of the Laplacean operator.
\end{remark}
\begin{remark}\label{diffss} Since all our proofs are based on the growth conditions of the right hand side in \rif{ddd}$_1$, different structure conditions can be considered in order to cover the cases as in \rif{rivn}.
\end{remark}
\begin{remark}\label{compute}
The constant $c_0$ appearing in \rif{smallii} can be explicitly computed in terms of the constant occurring in the Sobolev embedding theorem on $\Omega$, that is $\|w\|_{L^{q^*}}\leq c(n,q,\Omega)\|Dw\|_{L^q}$ for $w \in W^{1,q}_0(\Omega)$, where $q^*$ denotes the Sobolev embedding inequality - we will choose $q \approx p$. This fact can be easily checked by careful tracing the constant dependence in the proofs in Section \ref{secap}.
\end{remark}
\begin{remark}\label{indepe} An interesting feature of the proof of Theorems \ref{maind} and \ref{maind2} is that the validity of the a priori estimates \rif{aes1}-\rif{aes2} is independent of the assumptions $\|V\|_{L^n(\Omega)}< c_0 $ and $\|V\|_{L^\gamma(\Omega)}< \infty $ in the following sense. We derive the a priori estimates \rif{aes1}-\rif{aes2} for solutions $u_\ep$ to certain {\em regularized problems} approximating the original one i.e. systems where the left hand side operator is smoother and non-degenerate, and such that the right hand side $b_\ep(\cdot)V_\ep$ is bounded both with respect with $Du$ and $x$. For such solutions the validity of \rif{aes1}-\rif{aes2} {\em is independent} of the assumptions $\|V\|_{L^n(\Omega)}< c_0 $ and $\|V\|_{L^n(\Omega)}< \infty $. Such assumptions are eventually needed in the final approximation process $\ep \to 0$.
\end{remark}
\begin{remark}\label{remarkt} From the proofs of the a priori estimates given in this paper it will be clear that in \rif{apl}, \rif{aes1} and \rif{aes2} the integral in the right hand side can be replaced by
$$\left(\mean{B_R} (s+\Gamma+|Du|)^{t}\, dy\right)^\frac{1}{t}$$
for every $t>0$; the constant $c$ then depends also on $t$ and blows-up for $t\to 0$.
\end{remark}
\subsection{Technical novelties}\label{tn} Let us here add a few comments about the techniques employed in this paper. We start with the a priori estimate \rif{apl} and Theorem \ref{main01}. The proof exploits in a suitable way a few hidden facts linked to the underlying property of $p$-harmonic maps $u$ to be such that functions of the type $|Du|^p$ are in turn sub-solutions to uniformly elliptic equations. This is commonly called Bernstein's trick and in the setting of degenerate problems its use goes back to the work of Uhlenbeck \cite{U}. In our context this fact cannot be used directly - since we are not dealing with homogeneous equations - but we will nevertheless take advantage of this fact in an indirect way, deriving Caccioppoli type inequalities with a suitable remainder term involving $|V|^2$ for the function
\eqn{bern}
$$v \thickapprox |Du|^p\,.$$ This is the content of Lemma \ref{caccio} below.
 According to the classical approach to regularity going back to De Giorgi \cite{DG}, this sole ingredient is then shown to be enough to prove the local boundedness of $v$, and therefore of the gradient $Du$; see Lemma \ref{degiorgii} below. This yields an a priori estimate involving the $L^{2p}$-norm of $Du$ and of the potential ${\bf P}^{\tilde V}$ in \rif{nonp} of a certain rescaled version $\tilde V$ of $V$; finally, a further iteration/interpolation scheme allows to derive the desired a priori estimate involving $Du$ in the natural energy space $L^p$. The whole procedure must be carried out for suitably regularized problems in order to allow for the use of certain quantities - like second derivatives of solutions - whose existence would not be otherwise guaranteed when considering general degenerate problems. As for the a priori estimate of Theorem \ref{main02}, we can follow the same approach of Theorem \ref{main01} once the Caccioppoli type inequality of Lemma \ref{caccio} is proved for functions as \rif{bern}; this is ultimately a consequence of the quasi-diagonal structure \rif{uh} in that it use allows for certain quantities to become controllable; see Section \ref{thmain02} below.

 The precise form of the estimate \rif{apl} - that is the use of the non-linear potential ${\bf P}^V$ - allows now for a rapid derivation of all the other a priori estimates in the paper. Indeed, as already observed in Remark \ref{critic}, the use of the potential ${\bf P}^V$ allows to establish in a sharp way the borderline spaces to which the right hand side must belong on order to make system critical or subcritical. As a matter of fact the proof of the Lorentz space criterium in Theorem \ref{mainu} follows directly by the property of ${\bf P}^V$; see Section \ref{seclo} below. As for the a priori estimates \rif{aes1}-\rif{aes2} these follow using \rif{apl} with $V$ replaced by $b(x,u,Du)V(x)$ and via the use suitable interpolation/interation inequalities; see Section \ref{apcomp} below and Remark \ref{indepe}. Once the a priori estimates for Theorems \ref{maind} and \ref{maind2} have been obtained the approximation scheme necessary to solve problem \rif{ddd} is finally built in Section \ref{secap}. Not surprisingly some delicate problems occur in the conformal case $p=n$ - which is related to the model problems \ref{conf} and \ref{rivn}, as remarked in the Introduction. The proof of the necessary a priori estimates involves indeed a deep result by Iwaniec \& Sbordone \cite{IS} on the rigidity of Hodge decomposition estimates under power type perturbations - see Theorem \ref{ISs} below. The final passage to the limit is then realized modifying some clever weak convergence arguments developed in \cite{DHM} in the context of measure data problems.

\section{Preliminaries}\label{ellipticse}
In this paper we follow the usual convention of denoting by $c$ a general constant larger (or equal) than one, possibly varying from line to line; special occurrences will be denoted by $c_1$ etc; relevant dependence on parameters will be emphasized using parentheses. We shall denote in a standard way $B(x_0,R):=\{x \in \er^n \, : \,  |x-x_0|< R\}$ the open ball with center $x_0$ and radius $R>0$; when not important, or clear from the context, we shall omit denoting the center as follows: $B_R \equiv B(x_0,R)$. Moreover, when more than one ball will come into the play, they will always share the same center unless otherwise stated. We shall also denote $B \equiv B_1 = B(0,1)$; more in general, when no confusion will arise or when the specific radius or center will not be important we shall abbreviate by $B$ any ball under consideration, while for $\alpha >0$ the symbol $\alpha B$ will denote the ball concentric to $B$ having radius magnified by the factor $\alpha$. With $A$ being a measurable subset with positive measure, and with $g \colon A \to \er^k$ being a measurable map, we shall denote its average by
 $$
    \mean{A}  g(x) \, dx  := \frac{1}{|A|}\int_{A}  g(x) \, dx\,.
$$
As it often happens when dealing with $p$-Laplacean type monotone operators, it will be convenient to work with a non-linear quantity involving the gradient rather than with the gradient itself, a quantity that takes into account the structure properties of the $p$-Laplacean operator. We define
\eqn{Vfun}
$$
W(z)  := |z|^{\frac{p-2}{2}}z\;, \qquad \qquad z
\in \er^{Nn}\;.
$$A basic property of the map $W(\cdot)$, whose proof can be found in
\cite[Lemma 2.1]{Ham}, is the following: For any $z_1, z_2 \in
\er^{Nn}$, and any $s\geq 0$, it holds, for $c\equiv c(n,p)$ \eqn{V}
$$
c^{-1}\Bigl( |z_1|^2+|z_2|^2 \Bigr)^{\frac{p-2}{2}}\leq
\frac{|W(z_2)-W(z_1)|^2}{|z_2-z_1|^2} \leq c\Bigl(
|z_1|^2+|z_2|^2 \Bigr)^{\frac{p-2}{2}}\;.
$$
From this last inequality it follows that $W(\cdot)$ is a locally bi-Lipschitz bijection of
$\er^n$ into itself.
The strict monotonicity properties of the vector field $a(\cdot)$ implied
by the left hand side in \rif{asp}$_1$ can be recast using the map
$W(\cdot)$. Indeed combining \rif{asp}$_2$ and \rif{V} yields, for $c\equiv
c(n,p,\nu)>0$, and whenever $z_1,z_2 \in \er^{Nn}$
\eqn{mon3}
$$   c^{-1}
|W(z_2)-W(z_1)|^2 \leq  \langle
a(z_2)-a(z_1),z_2-z_1\rangle\;.$$ When $p\geq 2$
the previous inequality immediately implies
\eqn{mon2}
$$   c^{-1}
|z_2-z_1|^p \leq  \langle a(z_2)-a(z_1),z_2-z_1\rangle\;.$$
We also notice that assumptions \rif{asp} together with a standard use of H\"odler's inequality, imply the existence of a constant $c\geq 1$ such that
\eqn{mony}
$$
|z|^p\leq c\langle a(z), z\rangle + cs^p\,,
$$
whenever $z \in  \er^{Nn}.$

Te next is a classical iteration lemma typically used in the regularity theory of variational equations.
\begin{lemma}\label{simpfun}\textnormal{(\cite[Chapter 6]{G})} Let $\varphi: [R/2, R]\to [0,\infty)$ be a bounded function such that the inequality
$$
\varphi(\varrho) \leq \frac{ \varphi(r)}{2} + \frac{\B}{(r-\varrho)^{\gamma}} + \A$$ holds whenever $R/2 <  \varrho < r <  R$,
for fixed constants $\A, \B, \gamma\geq 0$.
Then it holds that $$ \varphi(R/2) \leq
\frac{c\B}{R^{\gamma}} + c\A$$
for a suitable constant $c$ depending only on $\gamma$.
\end{lemma}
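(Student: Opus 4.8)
The plan is to run the standard ``hole-filling'' iteration on a geometrically shrinking sequence of radii. Fix a parameter $\tau\in(0,1)$, to be chosen at the very end in dependence on $\gamma$ only, and define the strictly increasing sequence $\varrho_0:=R/2$ and $\varrho_{i+1}:=\varrho_i+(1-\tau)\tau^i (R/2)$. Since $\sum_{i\ge 0}(1-\tau)\tau^i=1$, we have $\varrho_i\uparrow R$, and in particular $R/2<\varrho_i<\varrho_{i+1}<R$ for every $i\ge 0$, so the hypothesis applies with $\varrho=\varrho_i$ and $r=\varrho_{i+1}$, giving
\[
\varphi(\varrho_i)\leq \tfrac12\,\varphi(\varrho_{i+1})+\frac{\B}{\big((1-\tau)\tau^i R/2\big)^{\gamma}}+\A\,,\qquad i\ge 0\,.
\]

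Iterating this inequality from $i=0$ up to $i=k-1$ and estimating $\sum_{i=0}^{k-1}2^{-i}\A\leq 2\A$ yields
\[
\varphi(R/2)\leq 2^{-k}\varphi(\varrho_k)+\frac{\B}{\big((1-\tau)R/2\big)^{\gamma}}\sum_{i=0}^{k-1}\frac{1}{(2\tau^{\gamma})^{i}}+2\A\,.
\]
The only genuine point is now the choice of $\tau=\tau(\gamma)\in(0,1)$: we take it so that $2\tau^{\gamma}>1$ — e.g. $\tau:=2^{-1/(2\gamma)}$ when $\gamma>0$, and any fixed $\tau\in(0,1)$ when $\gamma=0$ — so that the geometric series converges to $c_1(\gamma):=2\tau^{\gamma}/(2\tau^{\gamma}-1)$. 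Since $\varphi$ is bounded on $[R/2,R]$, letting $k\to\infty$ kills the term $2^{-k}\varphi(\varrho_k)$, and we arrive at
\[
\varphi(R/2)\leq \frac{2^{\gamma}c_1(\gamma)}{(1-\tau)^{\gamma}}\cdot\frac{\B}{R^{\gamma}}+2\A\,,
\]
which is the assertion with $c:=\max\{2^{\gamma}c_1(\gamma)(1-\tau)^{-\gamma},\,2\}$, a constant depending only on $\gamma$.

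I do not expect any real obstacle, the lemma being elementary; the single delicate choice is that of $\tau$, which must be taken close enough to $1$ that the decay factor $2^{-i}$ coming from the coefficient $1/2$ in the hypothesis beats the blow-up $\tau^{-i\gamma}$ produced by the shrinking increments $\varrho_{i+1}-\varrho_i$, while keeping the dependence of the final constant on $\gamma$ alone. The boundedness assumption on $\varphi$ enters only, but crucially, to discard the leftover term $2^{-k}\varphi(\varrho_k)$ when passing to the limit.
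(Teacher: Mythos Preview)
Your argument is the standard hole-filling iteration and is exactly what the paper has in mind: the lemma is not proved in the paper but merely quoted from \cite[Chapter~6]{G}, and your proof is precisely the textbook one found there.

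One small slip: you set $\varrho_0:=R/2$ and then assert ``$R/2<\varrho_i<\varrho_{i+1}<R$ for every $i\ge 0$'', which fails at $i=0$. As the hypothesis in the stated form requires the \emph{strict} inequality $R/2<\varrho$, the very first application of the assumed inequality is not literally justified. This is really a sloppiness in the lemma's formulation rather than in your idea (the version in \cite{G} allows $\varrho=R/2$), and in the paper's applications $\varphi$ is non-decreasing so the issue is harmless; still, you should either note that the hypothesis is tacitly assumed at $\varrho=R/2$, or run the iteration from $\varrho_0=R/2+\delta$ and use monotonicity of the bound to let $\delta\downarrow 0$.
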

We recall the following fundamental rigidity theorem of the Hodge decomposition with respect to power perturbations; see \cite{I, IS}.

\begin{theorem}\textnormal{(Iwaniec \& Sbordone \cite[Theorem 3]{IS})}\label{ISs} Let $\Omega\subset \er^n$ be a regular domain, and let $w \in W^{1,t}_0(\Omega, \er^N)$, with $t>1$ and $N \geq 1$; let $\delta \in (-1,t-1)$. Then there exists a vector field $\varphi \in  W^{1,\frac{t}{1+\delta}}_0(\Omega, \er^N)$ and a divergence free matrix field $H \in L^{\frac{t}{1+\delta}}(\Omega, \ma)$ such that
$$
|Dw|^\delta Dw =D\varphi + H
$$
and
$$
\|H\|_{L^\frac{t}{1+\delta}(\Omega)}\leq c~\delta\|Dw\|_{L^t(\Omega)}^{1+\delta}
$$
hold, for a constant $c$ depending only on $n,\Omega$.
\end{theorem}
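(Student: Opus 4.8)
The statement is the Hodge decomposition of the field $F:=|Dw|^\delta Dw$ relative to a power perturbation, and the natural route is the $L^r$-theory of the Dirichlet Laplacian on the regular domain $\Omega$. Put $r:=t/(1+\delta)$; the hypothesis $\delta\in(-1,t-1)$ is precisely $r\in(1,\infty)$, and since $|F|=|Dw|^{1+\delta}$ one has $\|F\|_{L^r(\Omega)}=\|Dw\|_{L^t(\Omega)}^{1+\delta}$. Writing $F=(F^1,\dots,F^N)$ with $F^i\in L^r(\Omega,\er^n)$, I would for each $i$ solve $\triangle\varphi^i=\divo F^i$ in $\Omega$ with $\varphi^i\in W^{1,r}_0(\Omega)$; on a regular domain this problem is uniquely solvable and the Calder\'on--Zygmund estimate $\|D\varphi^i\|_{L^r(\Omega)}\le c(n,r,\Omega)\|F^i\|_{L^r(\Omega)}$ holds — this is the only place where the regularity of $\Omega$ enters. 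Setting $H^i:=F^i-D\varphi^i$ gives $\divo H^i=0$ and $\|H^i\|_{L^r(\Omega)}\le c(n,r,\Omega)\|F^i\|_{L^r(\Omega)}$. Collecting the components produces $\varphi\in W^{1,t/(1+\delta)}_0(\Omega,\er^N)$, the divergence-free field $H\in L^{t/(1+\delta)}(\Omega,\ma)$ with $|Dw|^\delta Dw=D\varphi+H$, and the \emph{crude} bound $\|H\|_{L^{t/(1+\delta)}}+\|D\varphi\|_{L^{t/(1+\delta)}}\le c\|Dw\|_{L^t}^{1+\delta}$ — i.e. everything except the gain of the factor $\delta$.

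That gain is the heart of the matter, and it comes from the fact that the decomposition is \emph{trivial at $\delta=0$}: since $w\in W^{1,t}_0(\Omega)$, the field $Dw$ is already a gradient, so denoting by $\mathbb P\colon F\mapsto D\varphi$ the Hodge projection onto gradients — linear and bounded on $L^q(\Omega)$ for every $q\in(1,\infty)$ — we have $(\mathrm{Id}-\mathbb P)(Dw)=0$, and therefore
$$
H=(\mathrm{Id}-\mathbb P)\big(|Dw|^\delta Dw\big)=(\mathrm{Id}-\mathbb P)\big((|Dw|^\delta-1)\,Dw\big).
$$
One expects a factor $\delta$ because $|Dw|^\delta-1\to0$ as $\delta\to0$. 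Since $H$, $F$ and $\|Dw\|_{L^t}^{1+\delta}$ are all positively $(1+\delta)$-homogeneous in $w$, I may normalize $\|Dw\|_{L^t(\Omega)}=1$; I would take $\delta>0$ for definiteness (the case $\delta<0$ being symmetric) and split $\Omega=\{|Dw|\le1\}\cup\{|Dw|>1\}$. On the first set the elementary inequality $|\tau^\delta-1|\le\delta\,|\log\tau|$ for $\tau\in(0,1]$ yields the pointwise bound $|(|Dw|^\delta-1)Dw|\le\delta\,|Dw|\,\big|\log|Dw|\big|\le \delta/e$, which is controlled in $L^r(\Omega)$ (here $\Omega$ bounded) by $c(\Omega,t)\,\delta=c(\Omega,t)\,\delta\,\|Dw\|_{L^t}^{1+\delta}$ under the normalization.

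\textbf{The main obstacle is the ``large-gradient'' set $\{|Dw|>1\}$.} There a pointwise estimate is too wasteful: one only gets $|(|Dw|^\delta-1)Dw|\le\delta\,|Dw|^{1+\delta}\log|Dw|$, and the logarithm cannot be absorbed into a higher power of $|Dw|$ without spending integrability that $Dw\in L^t$ simply does not possess (dropping the logarithm altogether recovers $|Dw|^{1+\delta}\in L^{t/(1+\delta)}$ but loses the factor $\delta$). The way past this is to exploit the cancellation built into $\mathrm{Id}-\mathbb P$ rather than argue pointwise. My preferred approach is to run the bound through the \emph{analytic family} $z\mapsto(\mathrm{Id}-\mathbb P)(|Dw|^z Dw)$, which is holomorphic in a complex neighbourhood of $z=0$ into a suitable $L^q(\Omega)$ and vanishes at $z=0$, and then invoke Stein's interpolation theorem for analytic families to deposit the factor $\delta$ at the precise exponent $r=t/(1+\delta)$; an equivalent route is a good-$\lambda$/stopping-time truncation of $Dw$ at height $1$, estimating the contributions of the super-level sets of the Hardy--Littlewood maximal function, which is in substance the argument of \cite{IS}. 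Combining the two regimes with the projection bound $\|(\mathrm{Id}-\mathbb P)(\cdot)\|_{L^r}\le c(n,r,\Omega)\|\cdot\|_{L^r}$ and undoing the normalization gives $\|H\|_{L^{t/(1+\delta)}(\Omega)}\le c\,\delta\,\|Dw\|_{L^t(\Omega)}^{1+\delta}$, as claimed.
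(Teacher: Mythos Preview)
The paper does not contain a proof of this theorem: it is quoted verbatim as \cite[Theorem 3]{IS} and used as a black box in Section~\ref{secap}. There is therefore nothing in the paper to compare your attempt against.

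As for the substance of your sketch: the setup via the $L^r$-Hodge projection $\mathbb P$ and the reduction $H=(\mathrm{Id}-\mathbb P)\big((|Dw|^\delta-1)Dw\big)$ are exactly the right starting point, and you correctly isolate the real issue --- extracting the factor $\delta$ on the super-level set $\{|Dw|>1\}$. However, what you have written is an outline rather than a proof: on the large-gradient set you only \emph{name} two possible routes (Stein's analytic interpolation, or the stopping-time/good-$\lambda$ machinery of \cite{IS}) without executing either. The analytic-families route in particular would need you to specify carefully the target spaces along the vertical lines $\operatorname{Re} z=0$ and $\operatorname{Re} z=t-1$ and to verify the admissible-growth hypothesis, which is not automatic here because the projection norms $\|\mathbb P\|_{L^q\to L^q}$ blow up as $q\to 1,\infty$. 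The actual argument in \cite{IS} is closer to your second suggestion: it relies on a pointwise inequality of the type $\big||X|^\delta X-|Y|^\delta Y\big|\le c\,(1+\delta)\,|X-Y|\,\max(|X|,|Y|)^\delta$ combined with the $L^r$-boundedness of $\mathbb P$, and a careful bookkeeping that keeps the Calder\'on--Zygmund constant bounded uniformly in $\delta$ near $0$. If you want a self-contained proof, that is the step you would need to write out.
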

The  previous result will not be used here to derive a priori estimates for solutions, but it will rather be employed to perform the approximation procedure necessary proving Theorems \ref{maind2}-\ref{maind3} in the conformal case $p=n$; see Section \ref{secap} below.

The next is a suitable, boundary version of Gehring's lemma, which actually also holds under less restrictive assumptions that those considered here. The statement below will anyway suite our purposes, the main emphasis here being on the stability of the exponents and their correct dependence upon the various constants.
\begin{theorem}\label{gehringt} Let $v \in W^{1,p}(\Omega,\er^N)$ be a solution to the Dirichlet problem
$$
\left\{
\begin{array}{ccc}
-\divo\, \tilde{a}(Dv)= g & \mbox{in} & \Omega \\[3 pt]
 v\equiv v_0 & \mbox{on}& \partial\Omega\,,
 \end{array}
 \right.
$$
where the vector field $\tilde a \colon \Omega \to \ma$ satisfies assumptions \trif{asp}, $g \in L^\infty(\Omega, \er^N)$ and $u_0 \in W^{1,p_0}(\Omega, \er^N)$ for some $p_0 > p$; here $\Omega$ is a Lipschitz domain. Then there exists an exponent $p_1$,
\eqn{gehring}
$$
p< p_1 \equiv p_1\left(n,N,\ratio,[\partial \Omega]_{C^{0,1}}\right) \leq p_0\,,
$$
independent of the solution considered $v$, of the boundary datum $v_0$, of the vector field $\tilde a(\cdot)$, and of the function $g$, such that $v \in W^{1,p_1}(\Omega, \er^N)$.\end{theorem}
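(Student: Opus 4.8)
The plan is to prove Theorem \ref{gehringt} by the standard reverse H\"older / Gehring machinery, but carried out up to the boundary, with careful attention to the quantitative dependence of the exponent $p_1$. First I would establish a Caccioppoli inequality at every point of $\overline{\Omega}$. For interior balls $B_R \subset \Omega$ this is routine: testing the equation with $\eta^p(v - (v)_{B_R})$ and using \trif{asp} together with \trif{mon2} (or \trif{mon3} when $1<p<2$) gives
\[
\mean{B_{R/2}} |Dv|^p \, dx \leq c \mean{B_R} \left|\frac{v - (v)_{B_R}}{R}\right|^p dx + c \mean{B_R}\left(s^p + |g||v-(v)_{B_R}|\right) dx.
\]
For a boundary ball $B_R = B(x_0,R)$ with $x_0 \in \partial\Omega$ one proceeds analogously, but testing with $\eta^p(v - v_0)$ (extended suitably), so that the zero boundary condition $v - v_0 \in W^{1,p}_0$ makes the test function admissible; here the Lipschitz character of $\partial\Omega$ enters only to guarantee that the relevant relative measure-density and Sobolev--Poincar\'e inequalities hold with constants depending solely on $[\partial\Omega]_{C^{0,1}}$.

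Second, I would convert the Caccioppoli inequality into a reverse H\"older inequality by applying the Sobolev--Poincar\'e inequality on the right-hand side: with $\vartheta := \max\{1, np/(n+p)\} < p$ one obtains, for all balls $B$ with $2B$ either contained in $\Omega$ or centered on $\partial\Omega$ (using the zero boundary values in the latter case to bypass the need to subtract a mean),
\[
\mean{B} |Dv|^p \, dx \leq c \left(\mean{2B} |Dv|^{\vartheta}\, dx\right)^{p/\vartheta} + c\, \mean{2B}\left(s^p + |g|^{p'} + |Dv_0|^p\right) dx,
\]
where the last group of lower-order terms is in $L^{r}$ for some $r>1$: indeed $g \in L^\infty$, $s$ is a constant, and $Dv_0 \in L^{p_0}$ with $p_0 > p$. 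To handle $B$ not entirely of one of these two types (i.e. balls whose double meets $\partial\Omega$ but whose center is interior), one uses the standard trick of replacing such a ball by a boundary ball of comparable radius, or simply works with the extended pair $(v, v_0)$ on a slightly enlarged domain --- the exponents and constants remain controlled by $n, N, \nu, L$ and $[\partial\Omega]_{C^{0,1}}$.

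Third, I would invoke Gehring's lemma in its self-improving form (the Giaquinta--Modica version, as in \cite{G}): a function $h = |Dv|^p \in L^1$ satisfying a reverse H\"older inequality with a ``gain'' exponent $\vartheta/p < 1$ and an inhomogeneous term $k \in L^{r}$, $r>1$, belongs automatically to $L^{\sigma}_{\loc}$ for some $\sigma > 1$, \emph{and} the improvement exponent $\sigma - 1$ can be bounded below in terms only of the reverse-H\"older constant $c$, the gain exponent, $r$, and the dimension. Translating back, $|Dv| \in L^{p_1}$ with $p_1 = p\sigma$, and since $\sigma$ depends only on $n, N, \ratio$ and $[\partial\Omega]_{C^{0,1}}$ (the constant $c$ above depending only on these), so does $p_1$; one then caps $p_1$ at $p_0$ since the inhomogeneous term is only in $L^{p_0/p}$, giving \trif{gehring}. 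The boundary version of Gehring's lemma is entirely standard once the reverse H\"older inequality is available on the admissible family of balls.

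The main obstacle is purely bookkeeping rather than conceptual: one must verify that \emph{every} constant produced along the way --- the Caccioppoli constant, the Sobolev--Poincar\'e constant on balls intersecting $\partial\Omega$, and the Gehring improvement exponent --- depends on $\partial\Omega$ only through its Lipschitz seminorm $[\partial\Omega]_{C^{0,1}}$, and is otherwise independent of $v$, $v_0$, $\tilde a(\cdot)$ and $g$. The dependence on $\tilde a$ is controlled because \trif{asp} enters only through $\nu, L$; the independence of $v_0$ and $g$ is guaranteed because these appear only in the additive lower-order term, which Gehring's lemma treats as a fixed datum whose integrability exponent $p_0$ (respectively $\infty$) dictates only the \emph{upper} cap on $p_1$ but not the \emph{size} of the improvement. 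Thus the quantitative stability claim in the statement follows by tracking these constants, which is the one point requiring genuine care.
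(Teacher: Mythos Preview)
The paper does not actually prove Theorem \ref{gehringt}: it is stated in the preliminaries and justified only by the sentence ``In the way stated above the result can be deduced from the stronger results available in the literature, see for instance in \cite{Ark, DKM, dark, KMe}.'' Your outline --- interior and boundary Caccioppoli inequalities, Sobolev--Poincar\'e to obtain a reverse H\"older inequality, then the Giaquinta--Modica form of Gehring's lemma, with the bookkeeping that all constants depend only on $n,N,\nu,L$ and $[\partial\Omega]_{C^{0,1}}$ --- is precisely the standard route followed in those references, so there is no meaningful difference in approach to discuss.
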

In the way stated above the result can be deduced from the stronger results available in the literature, see for instance in \cite{Ark, DKM, dark, KMe}. For later applications we remark that the main point of interest here is that the exponent in \rif{gehring} is independent of the size of the norm $\|g\|_{L^\infty}$. Instead, as expected, the only thing blowing-up up when $\|g\|_{L^\infty}\to \infty$ is the constant $c$ appearing in the a priori estimate for $\|Du\|_{L^{p_0}}$ associated to the previous result, and that is indeed not reported here since it is not going to be used in the following.
\subsection{Smoothing}\label{smoothing} We finally
recall a few facts linked to some standard regularization methods; let us fix a family $\{\phi_\ep\}_{\ep>0}$ of standard mollifiers in $\ma$ and obtained in the following way: $\phi_\ep(z):=\ep^{-Nn}\phi(z/\ep)$. Here $\phi \in C^{\infty}(\er^{Nn})$ and it is such that
\eqn{suppmoll}
$$
\textnormal{supp}\, \phi=\overline{B_1}\qquad \mbox{and}\qquad \int_{\ma} \phi(z)\, dz =1\,.$$ We define the regularized vector fields
\eqn{reg}
$$
a_\ep (z):= (a*\phi_\ep)(z)\, ,
 \qquad \ep>0\, .
$$
It obviously follows that $a_\ep
(\cdot) \in C^\infty(\ma) $ and moreover, as in \cite[Lemma 3.1]{ELM} - whose arguments apply here since \rif{suppmoll} is assumed - we have that the following growth and ellipticity properties are satisfied for constants $\nu_0, L_0$ depending on $n,N,\nu,L,p$ but otherwise independent of $\ep$:
\eqn{aspep}
$$
\left\{
    \begin{array}{c}
    |a_\ep(z)|+|\partial a_\ep(z)|(|z|^2+s_\ep^2)^{\frac{1}{2}} \leq L_0(|z|^2+s_\ep^2)^{\frac{p-1}{2}} \\ [3 pt]
    \nu^{-1}_0(|z|^2+s_\ep^2)^{\frac{p-2}{2}}|\lambda|^{2} \leq \langle \partial a_\ep(z)\lambda, \lambda
    \rangle
    \end{array}
    \right.
$$
whenever $z, \lambda \in \ma$,
where
\eqn{se}
$$
s_\ep := s+ \ep>0\,.
$$
We recall that by the very definition in \rif{reg} we have that
\eqn{unic}
$$
a_\ep \to a \qquad \mbox{uniformly on compact subsets of} \ \ma\,.
$$
We now discuss the smoothing procedure when dealing with vector fields $a(\cdot)$ under the additional structure assumption \rif{uh}; the approximation should be done in order to preserve \rif{uh} so that we shall build the approximation starting from \rif{uh} - see also \cite[Lemma 3.2]{ELM}. For $\ep>0$ we define
\eqn{appvec}
$$
a_\ep(z):= g_{\ep}(|z|^2)z\,,\qquad \mbox{where}\ \ g_\ep(t):= g(\ep^2+t)\,.
$$
Using \rif{asp} an \rif{uh} it is now easy to see that the same \rif{asp} and \rif{unic} are satisfied with $s_\ep$ defined as in \rif{se}. Observe that, in particular \rif{aspep} are satisfied with $\nu_0 =\nu$ and $L=L_0$.
\section{Theorem \ref{main01}}\label{secth}
The proof falls is divided in four steps, and follows the path outlined in Section \ref{tn}. Moreover, since the result is local in nature we may assume with no loss of generality that $u \in W^{1,p}(\Omega)$.

{\em Step 1: Approximation}. Since the statement is local in nature and so is estimate \rif{apl}, we shall prove Theorem \ref{main01} replacing $\Omega$ by any open subset $\Omega' \Subset \Omega$ with smooth boundary; therefore the ball $B_R$ considered in \rif{apl} will be such that $B_R \Subset \Omega'$. In the same way all the balls considered in the rest of the proof will be contained in $\Omega'$. We first regularize the left hand side vector field according to \rif{reg} and then the right hand side potential as
\eqn{truncation}
$$
V_\ep(x):=\max\{\min\{V(x),1/\ep\}, -1/\ep\}\,.
$$ We define $u_\ep \in u + W^{1,p}_0(\Omega')$
 as the unique solution to the following Dirichlet problem:
\eqn{Dirapp}
$$
\left\{
    \begin{array}{ccc}
    -\divo \ a_\ep(Du_\ep)=V_\ep & \qquad \mbox{in} & \Omega'\\
        u_\ep= u&\qquad \mbox{on}& \partial\Omega'.
\end{array}\right.
$$
By \rif{aspep} standard regularity theory applies  - see Section \ref{serve} below - and we have
\eqn{serve}
$$
u_\ep
 \in W^{2,2}_{\loc}(\Omega')\cap C^{1, \alpha}_{\loc}(\Omega')
$$
for some $\alpha \in (0,1)$ depending on $\ep$, and therefore, considering the function
\eqn{subso}
$$
v_\ep:= (s_\ep^2 + |Du_\ep|^2)^{\frac{p}{2}}\,,
$$
we observe that \rif{serve} implies
\eqn{serve2}
$$
v_\ep
\in W^{1,2}_{\loc}(\Omega')\cap C^{0, \alpha}_{\loc}(\Omega')\,.
$$ Note that formally it is $v\equiv |Du|^p$ when $s_\ep=0$. We shall derive energy estimates for $v_\ep$, which are uniform with respect to $\ep$. Moreover, in the following we shall use the shorthand notation $v \equiv v_\ep$, $u\equiv  u_\ep$, $V \equiv V_\ep$ and $a_\ep(Du_\ep) \equiv a(Du)$, recovering the full notation from time to time in order to emphasize the uniformicity of the estimates with respect to $\ep$. Finally we shall denote $\Omega'\equiv \Omega$.

{\em Step 2: A Caccioppoli type inequality}. Here we show that the functions $v_\ep$ defined in \rif{subso} satisfy a suitable energy inequality - of so called Caccioppoli type - that in turn will imply the desired pointwise estimate for the gradient. Specifically, we have
\begin{lemma}[Caccioppoli's inequality for $v_\ep$]\label{caccio} Let $v_\ep\equiv v$ be the function defined in \trif{subso}; there exists a constant $c_1$, which only depends on $n,p,\ratio$, but is otherwise independent of $\ep$, such that whenever $B_R \subset \Omega$ is a ball with radius $R$ it holds that
\eqn{cacci}
$$\int_{B_{R/2}} |D(v-k)_+|^2  \, dx  \leq  \frac{c_1}{R^2} \int_{B_{R}} (v-k)_+^2 \, dx + c_1\int_{B_R}|\tilde V|^2\, dx  \,,
$$
where
\eqn{tildev}
$$\tilde V \equiv \tilde V_\ep := \left( s_\ep^2+\|Du_\ep\|_{L^{\infty}(B_{R})}^2\right)^{\frac{1}{2}}V_\ep\,.$$
\end{lemma}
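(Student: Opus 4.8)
The plan is to exploit the classical fact, going back to Uhlenbeck, that $v_\ep$ is essentially a subsolution to a uniformly elliptic equation, the only new feature being an inhomogeneity generated by $V_\ep$. Since the regularization in \rif{Dirapp} guarantees $u_\ep\in W^{2,2}_{\loc}\cap C^{1,\alpha}_{\loc}$ (see \rif{serve}), the equation $-\divo\, a_\ep(Du_\ep)=V_\ep$ may be differentiated in every coordinate direction $x_s$: replacing test functions by their $x_s$-derivatives (which is legitimate via difference quotients together with \rif{serve}) one gets, for every $\psi\in W^{1,2}$ with compact support in $\Omega$,
$$
\int \langle \partial a_\ep(Du_\ep)\,D(D_su_\ep), D\psi\rangle\, dx = -\int V_\ep\, D_s\psi\, dx\,.
$$
I would then insert $\psi\equiv \varphi_s:=\eta^2(v_\ep-k)_+D_su_\ep$, where $\eta$ is a standard cut-off with $0\le\eta\le1$, $\eta\equiv1$ on $B_{R/2}$, $\spt\,\eta\subset B_R$, $|D\eta|\le c/R$, and sum over $s=1,\dots,n$; each $\varphi_s$ is admissible because $u_\ep,Du_\ep$, hence $v_\ep$, are locally bounded and of class $W^{1,2}_{\loc}$. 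The computation is organized around the chain-rule identity $\sum_m D_mu_\ep\, D_{mk}u_\ep=p^{-1}(s_\ep^2+|Du_\ep|^2)^{(2-p)/2}D_kv_\ep$, which is valid precisely because $s_\ep>0$ keeps us away from the singular set.

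Expanding $D_j\varphi_s$ produces three groups of terms on the left. The term in which the derivative hits $(v_\ep-k)_+$ gives, after the above identity and \rif{aspep}$_2$, the lower bound $c^{-1}\int\eta^2|D(v_\ep-k)_+|^2\,dx$, which is the good term. The term in which the derivative hits $D_su_\ep$ gives, again by \rif{aspep}$_2$, the nonnegative quantity $c^{-1}\int\eta^2(v_\ep-k)_+(s_\ep^2+|Du_\ep|^2)^{(p-2)/2}|D^2u_\ep|^2\,dx$, which I keep in order to absorb one right-hand side contribution. The term in which the derivative hits $\eta^2$ is the one to watch: using the identity it is free of $(s_\ep^2+|Du_\ep|^2)$-weights, since the exponent $(2-p)/2$ produced by the identity exactly cancels the $(p-2)/2$ in the bound $|\partial a_\ep|\le L_0(s_\ep^2+|Du_\ep|^2)^{(p-2)/2}$, so Young's inequality controls it by $\delta\int\eta^2|D(v_\ep-k)_+|^2+c(\delta)R^{-2}\int_{B_R}(v_\ep-k)_+^2\,dx$.

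On the right-hand side one estimates $|\int V_\ep D_s\varphi_s\,dx|$ by splitting $D_s\varphi_s$ into the same three pieces. The two pieces carrying a factor $D_su_\ep$ (against $D\eta$ or against $D(v_\ep-k)_+$) are dealt with by Young's inequality, using $|Du_\ep|\le(s_\ep^2+|Du_\ep|^2)^{1/2}$ and the pointwise bound $(s_\ep^2+|Du_\ep|^2)|V_\ep|^2\le|\tilde V_\ep|^2$ on $B_R$, which manufactures exactly the term $\int_{B_R}|\tilde V_\ep|^2\,dx$ (plus $\delta$ times the good term). The delicate piece is $\int V_\ep\eta^2(v_\ep-k)_+\Delta u_\ep\,dx$: here I would split through Young's inequality pairing $|V_\ep|(s_\ep^2+|Du_\ep|^2)^{(2-p)/4}(v_\ep-k)_+^{1/2}\eta$ against $(s_\ep^2+|Du_\ep|^2)^{(p-2)/4}(v_\ep-k)_+^{1/2}\eta\,|D^2u_\ep|$, absorbing the latter into the nonnegative Hessian term and using the trivial estimate $(s_\ep^2+|Du_\ep|^2)^{(2-p)/2}(v_\ep-k)_+\le(s_\ep^2+|Du_\ep|^2)^{(2-p)/2}v_\ep=(s_\ep^2+|Du_\ep|^2)$ (recall $v_\ep=(s_\ep^2+|Du_\ep|^2)^{p/2}$ and $0\le(v_\ep-k)_+\le v_\ep$), which again reproduces $\int_{B_R}|\tilde V_\ep|^2\,dx$.

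Collecting everything, choosing $\delta$ small to absorb the $\delta$-multiples of $\int\eta^2|D(v_\ep-k)_+|^2\,dx$ into the good term, and using $\eta\equiv1$ on $B_{R/2}$, gives \rif{cacci} with $c_1=c_1(n,p,\nu,L)$ independent of $\ep$ (the regularized constants $\nu_0,L_0$ depend only on $n,p,\nu,L$). The main obstacle is the correct orchestration of Young's inequality in the last two steps: one must foresee that the two structurally dangerous terms — the cut-off cross term on the left and the Laplacian term on the right — are tamed, respectively, by the exact cancellation of the $(s_\ep^2+|Du_\ep|^2)$-powers and by the bound $(v_\ep-k)_+\le v_\ep$ together with the very definition of $\tilde V_\ep$, so that no uncontrolled power of $(s_\ep^2+|Du_\ep|^2)$ survives and the right-hand side assembles precisely into the term $\int_{B_R}|\tilde V_\ep|^2\,dx$ appearing in \rif{cacci}.
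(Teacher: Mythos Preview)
Your proposal is correct and follows essentially the same route as the paper: differentiate the regularized equation, test with $\eta^2(v_\ep-k)_+D_su_\ep$, use the chain-rule identity to extract the good term $\int\eta^2|D(v_\ep-k)_+|^2$ and the auxiliary Hessian term, and then handle the three right-hand side pieces by Young's inequality, absorbing the $\Delta u_\ep$ contribution into the Hessian term via the observation $(s_\ep^2+|Du_\ep|^2)^{(2-p)/2}(v_\ep-k)_+\le s_\ep^2+|Du_\ep|^2$. The only cosmetic difference is that the paper pulls out the factor $(s_\ep^2+\|Du_\ep\|_{L^\infty(B_R)}^2)^{1/2}$ \emph{before} applying Young's inequality to the $II_1$ term, whereas you apply Young first and bound $s_\ep^2+|Du_\ep|^2$ by its supremum afterwards; the two are algebraically equivalent.
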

\begin{proof}
In the following we shall keep on using Einstein's convention on repeated indexes. In the weak formulation of \rif{Dirapp}$_1$
\eqn{sys1}
$$
\int_{\Omega} \langle a(Du),D\varphi\rangle \, dx =\int_{\Omega}  a_i(Du)D_i\varphi\, dx   = \int_{\Omega} \varphi V \, d x\,,
$$
we use $D_s \varphi$ instead of $\varphi$ for $s \in \{1,\ldots,n\}$ as testing function;
integration by parts then yields
\eqn{l1}
 $$
\int_{\Omega} \partial_{z_j}a_i(Du)D_jD_su D_i\varphi \, dx = -\int_{\Omega} D_s\varphi V \, dx \,.
$$
Then we introduce
$$
\tilde{a}_{i,j}(x):= \frac{\partial_{z_j}a_i(Du(x))}{(s_\ep^2 + |Du(x)|^2)^{\frac{p-2}{2}}}\,,
$$
which is a bounded and uniformly elliptic matrix in view of \rif{asp}; in view of \rif{aspep} this means that
\eqn{elli}
$$
|\tilde{a}_{i,j}(x)|\leq L_0\, \qquad \qquad \nu_0|\lambda|^2 \leq \langle \tilde{a}(x)\lambda, \lambda \rangle
$$
for every $x \in \Omega$ and $\lambda \in \er^n$, where as in \rif{aspep} it holds that $0 <\nu_0\leq L_0 < \infty$; $\nu_0, L_0$ are independent of $\ep$, and only depend on $n,p,\ratio$. In the following, for $z \in \er^n$, we shall also let
$$
H_\ep(z):=(s_\ep^2 + |z|^2)^{\frac{1}{2}}\,, \qquad \qquad
H(Du)\equiv H_\ep(Du_\ep):=(s_\ep^2 + |Du_\ep|^2)^{\frac{1}{2}}\,.
$$
With such a notation we have
$H(Du)^p=v$ and \rif{l1} reads
\eqn{ee1}
 $$
\int_{\Omega} \tilde{a}_{i,j}H(Du)^{p-2} D_jD_su D_i\varphi \, dx = -\int_{\Omega} D_s\varphi V \, dx \,.
$$
Note that by \rif{serve} and \rif{elli} a standard density argument this last equation remains valid whenever $\varphi \in W^{1,2}(\Omega)$ has compact support in $\Omega$. Therefore, with $\eta \in C^{\infty}_0(B_R)$ being a non-negative cut-off function, in \rif{ee1} we may use the test function
$$
\varphi:=\eta^2          (v-k)_+ D_su
$$
Note that in view of \rif{serve}-\rif{serve2} this is an admissible test function in \rif{ee1}. Then we observe that, for $h \in \{1,\ldots,n\}$, we have
\eqn{deri}
$$
D_h\varphi = \eta^2(v-k)_+D_hD_su   + \eta^2D_su D_h(v-k)_+ + 2\eta (v-k)_+D_h\eta D_su \,,
$$ and therefore, summing up over $s\in \{1,\ldots,n\}$ yields
\begin{eqnarray}
 \nonumber I_1 + I_2 + I_3 &:=&  \int_{\Omega} \eta^2\tilde{a}_{i,j}H(Du)^{p-2} D_jD_su D_iD_su (v-k)_+ \, dx \\
  \nonumber && \qquad   + \int_{\Omega} \eta^2\tilde{a}_{i,j}H(Du)^{p-2} D_jD_su D_su D_i(v-k)_+ \, dx\\
  \nonumber&& \qquad   + 2\int_{\Omega} \eta\tilde{a}_{i,j}H(Du)^{p-2} D_jD_su  D_su (v-k)_+D_i\eta \, dx\\
  \nonumber  \qquad   &:=& - \int_{\Omega}  \eta^2D_sD_su (v-k)_+ V\, dx - \int_{\Omega}  \eta^2D_su D_s(v-k)_+V \, dx \\
   && \qquad   - 2\int_{\Omega} \eta D_s\eta D_su (v-k)_+V\, dx =: II_1+ II_2+ II_3\,.\label{bbb}
\end{eqnarray}
The terms appearing
in the left hand side of \rif{bbb} can be in turn estimated via Young's inequality and ellipticity as follows. Keeping \rif{subso} in mind and noticing that
$$
D_j v =p H^{p-2}(Du) D_jD_suD_su
$$
we have that
$$
\int_{\Omega} \eta^2\tilde{a}_{i,j}D_j (v-k)_+D_i (v-k)_+ \, dx =p I_2
$$
and therefore \rif{elli} yields
$$
 \nu_0\int_{\Omega} |D (v-k)_+|^2\eta^2 \, dx \leq pI_2\,.
$$
Again using ellipticity we have
\eqn{i5}
$$
I_5:= \int_{\Omega} \eta^2H^{p-2}(Du)|D^2u|^2 (v-k)_+ \, dx\leq  \nu_0^{-1} I_1\,.
$$
Finally, using Young's inequality with $\delta \in (0,1)$, we gain
\begin{eqnarray*}
I_3 &\leq & c\int_{\Omega} \eta|D\eta| |D(v-k)_+| (v-k)_+ \, dx\\
&\leq &  \delta\int_{\Omega} |D (v-k)_+|^2\eta^2 + c(\delta)
\int_{\Omega} (v-k)_+^2|D\eta|^2\, dx \,.
\end{eqnarray*}
By choosing $\delta\equiv \delta (n,p,\ratio)$ small enough in order to re-absorb
 terms we arrive at
\eqn{inter}
$$I_5 +  \int_{\Omega} \eta^2|D (v-k)_+|^2 \, dx\leq c\int_{\Omega} |D \eta|^2(v-k)_+^2 \, dx + c\big| II_1+II_2+III_3\big|\,,
$$
with $c$ depending only on $n,p,\ratio$.
As for the right hand side terms we shall simply initially estimate, everywhere inside the integrals, as follows:
\eqn{naive}
$$
|D_su|\leq \|Du\|_{L^{\infty}}\equiv \|Du\|_{L^{\infty}({\rm supp}\, \eta)}\,.
$$
Now, recalling the definition of $v$, we have that whenever we evaluate functions on the set supp $\eta$ it holds
$$
(v-k)_+ \leq \sqrt{(v-k)_+}H(Du)^{\frac{p}{2}}\leq ( s_\ep^2+\|Du\|_{L^{\infty}}^2)^{\frac{1}{2}}\sqrt{(v-k)_+}H(Du)^{\frac{p-2}{2}}\,,
$$
and therefore, using Young's inequality with $\delta \in (0,1)$, we further have
\begin{eqnarray}
 |II_1| &\leq & c\int_{\Omega}\eta^2 |D^2u| (v-k)_+ |V|\, dx \nonumber \\ &\leq & c( s_\ep^2+\|Du\|_{L^{\infty}}^2)^{\frac{1}{2}}\int_{\Omega}\eta^2 [(v-k)_+]^\frac{1}{2}H(Du)^{\frac{p-2}{2}} |D^2u||V |\, dx\nonumber \\
 & \leq &\delta I_5+ c( s_\ep^2+\|Du\|_{L^{\infty}}^2)\int_{\Omega}\eta^2 |V |^2\, dx\,.\label{IIuno}
\end{eqnarray}
Again by Young's inequality we have, with $\delta \in (0,1)$, that
\eqn{IIdue}
$$
|II_2| \leq \delta \int_{\Omega} \eta^2|D(v-k)_+|^2\, dx  + c(\delta)
\|Du\|_{L^{\infty}}^2 \int_{\Omega}\eta^2  |V|^2\, dx\,,$$
and
\eqn{pdopo}
$$ |II_3| \leq c\int_{\Omega} |D\eta|^2(v-k)_+^2\, dx + c \|Du\|_{L^{\infty}}^2 \int_{\Omega} \eta^2 |V|^2\, dx\,. $$
Matching all the previous inequalities, choosing $\delta$ small enough in order to re-absorb the terms involving $\delta$ in the right hand side, and finally taking $\eta$ in a standard way - i.e. $\eta \equiv 1$ on $B_{R/2}$, $0 \leq \eta \leq 1$, and $\|D \eta\|_{L^\infty}\leq cR^{-1}$ - we arrive at the Caccioppoli's inequality \rif{cacci}.
\end{proof}
{\em Step 3: An oscillation improvement estimate.} Here we prove the following:
\begin{lemma}\label{itera} Let $(v,\tilde V)$ be a couple of functions defined in an open subset $\Omega$, with $v \geq 0$, and such that \trif{cacci} holds whenever $k\geq 0$, for a fixed $B \equiv B_{R} \subset \Omega$; moreover let $k,d>0$ be fixed and assume that
\eqn{start}
$$
|(1/2)B \cap \{v >k\}|\leq  \frac{1}{d^2}\int_{(1/2)B } (v-k)_+^2 \, dx
$$
holds. Then there exists a constant $c \equiv c(n,c_1)$
such that there holds
\begin{align}
    \bigg(\frac{1}{d^2  R^n} \int_{(1/2)B}&(v-k)_+^2 \, dx\bigg)^{\frac{\chi}{2}}    \nonumber \\
    & \leq c\left(\frac{1}{d^2  R^n} \int_{B} (v-k)_+^2 \, dx\right)^{\frac{1}{2}}
        + c\left(\frac{|\tilde V|^2(B)}{d^2R^{n-2}}\right)^{\frac{1}{2}}\, , \label{disit}
\end{align}
where $\chi \in (0,1)$ depends only on $n$.
\end{lemma}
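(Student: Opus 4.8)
The plan is to run the classical De Giorgi argument: combine the Caccioppoli inequality \rif{cacci} (used only at the single fixed level $k$) with the level–set measure bound \rif{start} and the Sobolev embedding, the interplay of these three ingredients being exactly what produces the fractional–power gain encoded by $\chi<1$. Throughout write $w:=(v-k)_+\ge 0$ and $A:=\int_{(1/2)B}w^2\,dx$; from \rif{cacci} and the standing hypotheses, $w\in W^{1,2}(B_{R/2})$ with $B_{R/2}=(1/2)B$. I describe the case $n\ge 3$; for $n=2$ only a cosmetic change is needed.

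\emph{Step 1: using \rif{start}.} Since $w$ is supported in $\{v>k\}$, H\"older's inequality with exponents $\frac n{n-2},\frac n2$ gives
$$A=\int_{(1/2)B\cap\{v>k\}}w^2\,dx\le\Big(\int_{(1/2)B}w^{2^*}\,dx\Big)^{\frac2{2^*}}\big|(1/2)B\cap\{v>k\}\big|^{\frac2n},\qquad 2^*:=\tfrac{2n}{n-2}.$$
Inserting the hypothesis \rif{start}, namely $|(1/2)B\cap\{v>k\}|\le d^{-2}A$, and raising the resulting inequality to the power $\frac n{n-2}$ yields
$$A\le d^{-\frac4{n-2}}\int_{(1/2)B}w^{2^*}\,dx.$$

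\emph{Step 2: Sobolev--Poincar\'e and Caccioppoli.} On $B_{R/2}$ no cutoff is available (the gradient bound \rif{cacci} already lives on $B_{R/2}$), so I use the averaged form of Sobolev--Poincar\'e and absorb the mean by the $L^2$–norm:
$$\Big(\mean{B_{R/2}}w^{2^*}\,dx\Big)^{\frac2{2^*}}\le c(n)R^2\mean{B_{R/2}}|Dw|^2\,dx+c(n)\mean{B_{R/2}}w^2\,dx.$$
Now I plug \rif{cacci} into the gradient term, bound $\mean{B_{R/2}}w^2\,dx\le c(n)R^{-n}\int_{B_R}w^2\,dx$, and use the identity $\big(\mean{B_{R/2}}w^{2^*}\big)^{2/2^*}=c(n)R^{-(n-2)}\big(\int_{B_{R/2}}w^{2^*}\big)^{(n-2)/n}$; multiplying through by $R^{n-2}$ leaves
$$\Big(\int_{B_{R/2}}w^{2^*}\,dx\Big)^{\frac{n-2}{n}}\le c(n,c_1)\Big[\frac1{R^2}\int_{B_R}w^2\,dx+|\tilde V|^2(B_R)\Big].$$

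\emph{Step 3: conclusion.} Chaining the last displays of Steps 1--2,
$$d^{\frac4n}A^{\frac{n-2}{n}}\le c(n,c_1)\Big[\frac1{R^2}\int_{B_R}w^2\,dx+|\tilde V|^2(B_R)\Big].$$
Dividing by $d^2R^{n-2}$ one checks — and this is the one place where the powers of $d$ and $R$ must be tracked carefully — that the left side equals $\big(A/(d^2R^n)\big)^{(n-2)/n}$ while the right side equals $c\big[\frac1{d^2R^n}\int_{B_R}w^2+\frac{|\tilde V|^2(B_R)}{d^2R^{n-2}}\big]$; taking square roots and using $\sqrt{a+b}\le\sqrt a+\sqrt b$ produces \rif{disit} with $\chi=\frac{n-2}{n}\in(0,1)$. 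For $n=2$ one replaces $2^*$ by any fixed $q\in(2,\infty)$ (via $W^{1,2}(B_\rho)\hookrightarrow L^q(B_\rho)$) and the same computation gives \rif{disit} with $\chi=2/q\in(0,1)$; in both cases $\chi$ depends only on $n$ and the constant only on $n,c_1$. There is no serious obstacle here: the two points requiring care are the absence of a cutoff in Step 2 (circumvented by the averaged Sobolev--Poincar\'e inequality) and the exact bookkeeping of the exponents of $d$ and $R$ in Step 3, which is precisely what forces the estimate into the scale–invariant shape \rif{disit}.
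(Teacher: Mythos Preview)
Your proof is correct and follows essentially the same route as the paper: H\"older's inequality on the level set combined with the hypothesis \rif{start}, the Sobolev embedding on $B_{R/2}$, and the Caccioppoli inequality \rif{cacci} are assembled in the same way, yielding the same value $\chi=(n-2)/n$ (and $\chi=2/q$ for $n=2$). The only cosmetic difference is that the paper first rescales to $B_1$ and $d=1$ before carrying out the estimates, whereas you track the powers of $R$ and $d$ explicitly throughout; the content is identical.
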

\begin{proof}Let $B\equiv B(x_0,R)$ for some $x_0 \in \Omega'$ be the ball in question.
We start observing that we can reduce to the case
$
B \equiv B_1$ and $d=1$. Indeed, let us define
$$
w(y):= \frac{v(x_0+Ry)}{d}\,, \qquad W(y):= \frac{R\tilde V(x_0+Ry)}{d}\,,\qquad y \in B_1\,.
$$
The couple $(w, W)$ satisfies inequality \rif{cacci} with $(B_R, d,k)$ replaced by $(B_1, 1, k/d)$;
moreover we observe that also \rif{start} is fulfilled for $w$
with $k$ replaced by $k/d$. Therefore it suffices to prove the lemma for $(w, W)$, i.e. , i.e. inequality \rif{disit}, on $B\equiv B_1$ where $k$ is replaced by $k/d$; the
general case follows scaling back to $(v, \tilde V)$.
Therefore we pass to the proof for the special situation considered above assuming that
\eqn{cacci1}
$$\int_{B_{1/2}} |D(v-k)_+|^2  \, dx  \leq  c_1 \int_{B_1} (v-k)_+^2 \, dx + c_1\int_{B_1}|\tilde V|^2\, dx  \,.
$$
Moreover we assume that
$
|B_1 \cap \{v >k\}|>0,
$
otherwise \rif{disit} trivializes. Similarly, we may also assume that
$$
    \int_{B_1} (v-k)_+^2 \, dx>0\,.
$$
Now we denote by
$$
2<t:=\left\{
\begin{array}{ccc}
 \frac{2n}{n-2} &\mbox{if}& n>2\\[3 pt]
\mbox{any number larger than two}&\mbox{if}& n=2
\end{array}
\right.
$$
the usual Sobolev embedding exponent and apply Sobolev embedding theorem
$$
\left(\int_{B_{1/2}} (v-k)_+^t \, dx\right)^{\frac{2}{t}}\leq c \int_{B_{1/2}} |D(v-k)_+|^2  \, dx + c\int_{B_{1/2}} (v-k)_+^2 \, dx\,.
$$
Together with \rif{cacci1} the last estimate now yields the following reverse H\"older-type
inequality:
\eqn{cacci3}
$$
\left(\int_{B_{1/2}} (v-k)_+^t \, dx\right)^{\frac{2}{t}}\leq   c\int_{B_{1}} (v-k)_+^2 \, dx+ c\int_{B_1}|\tilde V|^2\, dx  \,.
$$
On the other hand by \rif{start} we have
\begin{eqnarray*}
\left(\int_{B_{1/2}}(v-k)_+^2 \, dx\right)^{\frac{2}{t}} &= &\left(\int_{B_{1/2}} (v-k)_+^2 \, dx\right)^{\frac{2}{t}-1}\int_{B_{1/2}} (v-k)_+^2 \, dx\\
&\leq & |B_{1/2} \cap \{v >k\}|^{\frac{2}{t}-1}\int_{B_{1/2}} (v-k)_+^2 \, dx\,.
\end{eqnarray*}
In turn, applying H\"older's inequality yields
$$\int_{B_{1/2}} (v-k)_+^2 \, dx \leq |B_{1/2} \cap \{v >k\}|^{1-\frac{2}{t}}\left(\int_{B_{1/2}} (v-k)_+^{t} \, dx\right)^{\frac{2}{t}}\,.$$
The last two inequalities give now
\eqn{ca1}
$$
\left(\int_{B_{1/2}} (v-k)_+^{2} \, dx\right)^{\frac{2}{t}}\leq \left(\int_{B_{1/2}} (v-k)_+^{t} \, dx\right)^{\frac{2}{t}}\,.
$$
Inequality \rif{disit}
now follows estimating the right hand side of \rif{ca1} by mean of \rif{cacci3}, taking of course $\chi = 2/t \in (0,1)$.
\end{proof}
{\em Step 3: Iteration and a priori estimate for $v$.} Here we iterate Lemma \ref{itera} - applied in the context of Step 1 - in order to get a first pointwise estimate of $v$ in terms of $\tilde V$. More precisely, we prove the following abstract result, that will be eventually applied with the choice $v \equiv v_\ep$ (here we are slightly abusing the notation) and for $\tilde V$ defined in \rif{tildev}.
\begin{lemma}[Abstract potential estimate]\label{degiorgii} Let $v \in W^{1,2}_{\loc}(\Omega)$ be a function satisfying \trif{cacci} whenever $B_R \subset \Omega'$, for a certain function $\tilde V \in L^2(\Omega')$  and for every $k,d>0$. Then there exists a constant $c$, depending only on $n$ and $c_1$, such that
\eqn{app1x}
$$
|v(x)|\leq c \left(\mean{B(x,R)} |v|^2\, dy \right)^{\frac{1}{2}} + c{\bf P}^{\tilde{V}}(x,2R)\,,
$$
holds for every ball $B(x, 2R) \subset \Omega'$.
\end{lemma}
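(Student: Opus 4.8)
The plan is to run a De~Giorgi iteration centred at the point $x$, the novelty being that the truncation levels are chosen \emph{adaptively} — in terms of the current excess — rather than with the usual geometric increments, and that the single‑step engine is the self‑improving inequality of Lemma~\ref{itera}. Fix $x$ with $B(x,2R)\subset\Omega'$; it suffices to prove \rif{app1x} at Lebesgue points of $v$ (they are a.e.\ points, and they are all points when $v$ is continuous, as in the applications). Write $r_j:=R2^{-j}$, $B_j:=B(x,r_j)$, and set the dyadic potential densities $T_j:=\bigl(|\tilde V|^2(B_j)/r_j^{\,n-2}\bigr)^{1/2}$. Comparing each $T_j$ with $\int_{r_j}^{2r_j}\bigl(|\tilde V|^2(B(x,\varrho))/\varrho^{n-2}\bigr)^{1/2}\,\frac{d\varrho}{\varrho}$ and summing over the partition $\{(r_j,2r_j)\}_{j\ge0}$ of $(0,2R)$ gives
\[
\sum_{j\ge0}T_j\ \le\ c(n)\,{\bf P}^{\tilde{V}}(x,2R)\,,
\]
and it is precisely the term $j=0$, which involves $B(x,R)$ and the range $(R,2R)$, that forces the radius $2R$ (and not $R$) into the statement. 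I will build an increasing sequence $0=k_0<k_1<\dots\uparrow k_\infty$ with increments $D_j:=k_{j+1}-k_j$, to be fixed below, and prove that the excesses $e_j:=\mean{B_j}(v-k_j)_+^2\,dy$ tend to $0$. Since $k_j\uparrow k_\infty$, $B_j\downarrow\{x\}$ and $v\ge0$, this yields $v(x)\le k_\infty$, so the task is reduced to the estimate $k_\infty=\sum_jD_j\le c\bigl[(\mean{B(x,R)}v^2\,dy)^{1/2}+{\bf P}^{\tilde{V}}(x,2R)\bigr]$.

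At the $j$-th step I apply Lemma~\ref{itera} on the ball $B_j$, at level $k=k_j$, with the parameter $d=d_j$ \emph{chosen so that hypothesis \rif{start} holds with equality}, namely
\[
d_j:=\biggl(\frac{\int_{B_{j+1}}(v-k_j)_+^2\,dy}{|B_{j+1}\cap\{v>k_j\}|}\biggr)^{1/2}
\]
(should this super‑level set be null, then $v\le k_j$ a.e.\ on $B_{j+1}$ and the process terminates with $v(x)\le k_j\le k_\infty$). Since \rif{cacci} holds on every ball, Lemma~\ref{itera} applies; substituting this $d_j$ into \rif{disit}, and using $\int_{B_{j+1}}(v-k_j)_+^2\,dy=d_j^{2}|B_{j+1}\cap\{v>k_j\}|$ on the left‑hand side together with $\int_{B_j}(v-k_j)_+^2\,dy=|B_j|\,e_j$ on the right, one first gets $d_j\le c\,\nu_j^{-\chi/2}\bigl(e_j^{1/2}+T_j\bigr)$ and then, via $d_j^{2}=\nu_j^{-1}\mean{B_{j+1}}(v-k_j)_+^2\,dy$, the inequality
\[
e_{j+1}\ \le\ \mean{B_{j+1}}(v-k_j)_+^2\,dy\ \le\ c\,\nu_j^{\,1-\chi}\bigl(e_j+T_j^{2}\bigr)\,,\qquad \nu_j:=\frac{|B_{j+1}\cap\{v>k_j\}|}{|B_{j+1}|}\,,
\]
with $c\equiv c(n,c_1)$. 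The purpose of the gain $\chi<1$ in Lemma~\ref{itera} is exactly this: it places the improving factor on the \emph{density} $\nu_j$ of the super‑level set rather than on the excess itself.

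It remains to force $\nu_j$ to be small, which is where the adaptive choice of levels is used: set $D_j:=H\bigl(e_j^{1/2}+T_j\bigr)$ with $H=H(n,c_1)$ large. For $j\ge1$, Chebyshev's inequality applied at the previous step gives
\[
|B_{j+1}\cap\{v>k_j\}|\ \le\ |B_{j-1}\cap\{\,v-k_{j-1}>D_{j-1}\}|\ \le\ D_{j-1}^{-2}\!\int_{B_{j-1}}(v-k_{j-1})_+^2\,dy=D_{j-1}^{-2}|B_{j-1}|\,e_{j-1}\,,
\]
so that $\nu_j\le 4^{n}e_{j-1}D_{j-1}^{-2}\le 4^{n}H^{-2}$. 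Choosing $H$ so that $c\,(4^{n}H^{-2})^{1-\chi}\le\tfrac12$ converts the previous display into the genuinely contractive linear recursion $e_{j+1}\le\tfrac12 e_j+\tfrac12 T_j^{2}$ for $j\ge1$ (for $j=0$ one has only the crude $e_1\le c(e_0+T_0^{2})$, since $\nu_0\le1$). Hence $e_j\to0$, and unrolling the recursion as a geometric convolution of the $T_j^{2}$ yields $\sum_j e_j^{1/2}\le c\bigl(e_0^{1/2}+\sum_j T_j\bigr)$. Therefore $k_\infty=H\sum_j\bigl(e_j^{1/2}+T_j\bigr)\le c\bigl(e_0^{1/2}+\sum_j T_j\bigr)\le c\bigl[(\mean{B(x,R)}v^2\,dy)^{1/2}+{\bf P}^{\tilde{V}}(x,2R)\bigr]$, and combined with $v(x)\le k_\infty$ this is exactly \rif{app1x}.

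The step I expect to be the genuine obstacle is recognizing why the textbook De~Giorgi scheme with fixed geometric increments cannot work in this borderline situation: the forcing produced by a merely summable potential is not geometrically small, so a super‑linear recursion carrying such a forcing need not converge. The remedy is to let the increments track the current excess $e_j$; once this is done, the gain $\chi<1$ of Lemma~\ref{itera}, together with the smallness of the density $\nu_j$ that this very choice produces (via Chebyshev), collapses the iteration to an ordinary contraction with summable forcing, after which the stated conclusion — with the constant depending only on $n$ and $c_1$ — is pure bookkeeping.
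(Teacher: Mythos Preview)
Your proof is correct and follows essentially the same Kilpel\"ainen--Mal\'y style adaptive De~Giorgi iteration as the paper: both arguments choose the truncation levels in terms of the current excess, feed each step into Lemma~\ref{itera}, and reduce to a linear contractive recursion with the dyadic potential densities $T_j$ as forcing. The only difference is bookkeeping---the paper sets $k_{j+1}-k_j:=\delta^{-1}\bigl(R_j^{-n}\!\int_{B_{j+1}}(v-k_j)_+^2\bigr)^{1/2}$ with a small $\delta$ and obtains the recursion directly on the increments $k_{j+1}-k_j\le\tfrac12(k_j-k_{j-1})+cT_j$, whereas you set $k_{j+1}-k_j:=H(e_j^{1/2}+T_j)$ with a large $H$ and carry the recursion on the averaged excesses $e_j$; either way one sums to $k_\infty\le c\,e_0^{1/2}+c\sum_jT_j$.
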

\begin{proof} We shall use a suitable modifications of De Giorgi's iteration technique \cite{DG} following the approaches proposed in \cite{KM}. We consider a dyadic sequence of radii $R_j:=2^{1-j}R$, $j \in \en$, and let $B_j := B(x,R_j)$. We define
$k_0:=0$, and, recursively for $j \geq 0$,
\eqn{inter0}
$$
k_{j+1}:= k_j + \left(\frac{1}{\delta^2 R_j^n}\int_{B_{j+1}} (v-k_j)^2_+\, dx \right)^{\frac{1}{2}}\,.
$$
The number $\delta \in (0,1)$ will be chosen in a few lines; it is going to be suitably small, but will be always chosen
in a way making it depending only on $n$ and $c_1$.
We note that $\{k_j\}$ is a non-decreasing sequence.
We will prove the validity of the estimate
\eqn{decay}
$$
k_{j+1} -k_j  \leq \frac{1}{2}\left(k_j -k_{j-1} \right)+ c
  \left(\frac{|\tilde V|^2(B_j)}{R_j^{n-2}}\right)^{\frac{1}{2}}
$$
for every $j \in \en$, and for a constant $c$ only depending on $n,c_1$. In order to prove \rif{decay} for $j \in \en$ we preliminary observe that we may assume
$k_{j+1}>k_j$;
otherwise \rif{decay} itself is trivially satisfied.
Then we start showing that
\eqn{absorb}
$$
\delta^{\chi}\leq c \,\delta\,\frac{k_j-k_{j-1}}{k_{j+1}-k_j}+  c\left(\frac{|\tilde V|^2(B_j)}{d_j^2R_j^{n-2}}\right)^{\frac{1}{2}}
$$
holds for every $j \in \en$, with a constant $c$ depending only on $n,c_1$, and where
\eqn{defid}
$$
d_j :=\frac{k_{j+1}-k_{j}}{2^{n/2}}\,.
$$
Here $\chi \in (0,1)$ is the number introduced in Lemma \ref{itera}. By mean of the definition given in \rif{inter0} we have
\begin{eqnarray*}
|B_j\cap \{v> k_j\}|  & \leq & \frac{1}{(k_j-k_{j-1})^2}\int_{B_{j}\cap \{v> k_j\}}(v-k_{j-1})_+^2\, dx\\  &\leq & \frac{1}{(k_j-k_{j-1})^2}\int_{B_{j}} (v-k_{j-1})_+^2\, dx\\&= &  \delta^2 R_{j-1}^n=
2^n\delta^2 R_{j}^n= \frac{2^n}{(k_{j+1}-k_{j})^2}\int_{B_{j+1}} (v-k_{j})_+^2\, dx\,.
\end{eqnarray*}
Observe that we repeatedly used \rif{inter0}. From the last chain of inequalities it is clear that choosing $\delta$ small enough in order to have $
\delta^2 \leq 2^{-(n+1)}R_j^{-n}|B_j|
$
- which imposes on $\delta$ a smallness condition depending only on $n$ - we obtain
\eqn{mezza}
$$
|B_j\cap \{v> k_j\}|\leq \frac{1}{2}|B_j|\,.
$$
With the definition in \rif{defid} we are able to apply Lemma \ref{itera} for the choices
$k \equiv k_j$, $d \equiv d_j$ and $B \equiv B_{j+1}$,
as assumption \rif{start} turns out to be satisfied. This yields
\begin{align}
\nonumber \bigg(
 \frac{1}{d_j^2   R_j^n} \int_{B_{j+1} }& (v-k_{j})_+^2 \, dx\bigg)^{\frac{\chi}{2}}\\
&
\leq c
\left(\frac{1}{d_j^2   R_j^n} \int_{B_{j} } (v-k_{j})_+^2 \, dx\right)^{\frac{1}{2}}+ c\left(\frac{|\tilde V|^2(B_j)}{d_j^2R_j^{n-2}}\right)^{\frac{1}{2}}\label{disit2}\;,
\end{align}
where $\chi \in (0,1)$ is the number introduced in Lemma \ref{itera}.
In turn, using \rif{inter0} again we observe that
$$\left(\frac{1}{d_j^2   R_j^n} \int_{B_{j} } (v-k_{j})_+^2 \, dx\right)^{\frac{1}{2}} \leq
\left(\frac{1}{d_j^2   R_j^n} \int_{B_{j} } (v-k_{j-1})_+^2 \, dx\right)^{\frac{1}{2}} =  2^n\delta \left(\frac{d_{j-1}}{d_j}\right) \,.
$$
Merging the last inequality with \rif{disit2}, and using \rif{inter0} together with the definition of $d_j$ in \rif{defid}, we find
\begin{eqnarray}
\delta^{\chi}&\leq& c \left(
 \frac{1}{d_j^2   R_j^n} \int_{B_{j+1}} (v-k_{j})_+^2 \, dx\right)^{\frac{\chi}{2}}\nonumber\\
 &\leq&
 c
\left(\frac{1}{d_j^2   R_j^n} \int_{B_{j} } (v-k_{j})_+^2 \, dx\right)^{\frac{1}{2}}+ c\left(\frac{|\tilde V|^2(B_j)}{d_j^2R_j^{n-2}}\right)^{\frac{1}{2}}\nonumber\\
 &\leq &
c \, \delta\,\frac{k_{j}-k_{j-1}}{k_{j+1}-k_{j}} +  c\left(\frac{|\tilde V|^2(B_j)}{d_j^2R_j^{n-2}}\right)^{\frac{1}{2}}\,,\label{lunga}
\end{eqnarray}
for a constant $c$ which depends only on $n,c_1$; the proof of \rif{absorb} is therefore complete. We can now show that \rif{decay} holds.
Indeed, if $
k_{j+1} -k_j  \leq (1/2)(k_j -k_{j-1})
$
holds so also \rif{decay} does, trivially. Otherwise, we have
$
(k_j -k_{j-1})/(k_{j+1} -k_j )< 2,
$
which, used in \rif{lunga}, yields
$$
\delta^{\chi}\leq \tilde{c}\, \delta+c
\left(\frac{|\tilde V|^2(B_j)}{d_j^2R_j^{n-2}}\right)^{\frac{1}{2}}\,,
$$
with $\tilde{c}$ depending only on $n,c_1$. Therefore reducing further the size of $\delta \equiv \delta (n,c_1)$ in order to have
$\delta  < (1/2\tilde{c})^{1/(1-\chi)}$, and recalling the choice of $d_j$, we conclude with
$$
k_{j+1}-k_j\leq c
 \left(\frac{|\tilde V|^2(B_j)}{R_j^{n-2}}\right)^{\frac{1}{2}}
$$ so that \rif{decay} follows in any case. The proof of \rif{app1x} can now be obtained iterating \rif{decay}:
\begin{eqnarray}
\nonumber k_{m}-k_1 & \leq & k_{m+1}-k_1= \sum_{j=1}^{m} (k_{j+1} -k_j )\\\nonumber &\leq &   \frac{1}{2} \sum_{j=1}^{m} (k_{j}-k_{j-1} ) + c \sum_{j=1}^{m} \left(\frac{|\tilde V|^2(B_j)}{R_j^{n-2}}\right)^{\frac{1}{2}} \\ & \leq  &\frac{1}{2} k_{m} + c\sum_{j=1}^{m}
\left(\frac{|\tilde V|^2(B_j)}{R_j^{n-2}}\right)^{\frac{1}{2}}\leq   \frac{1}{2} k_{m}+ c\,{\bf P}^{\tilde V}(x,2R)
\label{discrete}
\,,\end{eqnarray}
where we used the content of Remark \ref{discrete2} below and that $k_0=0$ by definition.
Therefore we have that
$$
\lim_{m \to \infty} k_m  \leq 2k_1 +c\,{\bf P}^{\tilde V}(x,2R).
$$
On the other hand, since $v$ is continuous - see \rif{serve2} - and since \rif{mezza} implies that $\inf_{B_m} v \leq k_m$
we have
\eqn{quasi}
$$
|v (x)|  =  \lim_{m \to \infty} \, \inf_{B_m}\, v\leq  \lim_{m \to \infty} k_m\nonumber \leq  2k_1 +c\,{\bf P}^{\tilde V}(x,2R)
$$
where $c$ depends only $n,c_1$. At this point estimate \rif{app1x} follows taking the definition of $k_1$ into account. The proof of Lemma \ref{degiorgii} is complete. \end{proof}
At this point we can apply both Lemmas \ref{itera} and \ref{degiorgii} to get that the following inequality:
\eqn{app1}
$$
|v_\ep(x)|\leq c \left(\mean{B(x,R)} |v_\ep|^2\, dy \right)^{\frac{1}{2}} + c\,{\bf P}^{\tilde{V}\ep}(x,2R)\,,
$$
holds for every ball $B(x, 2R) \subset \Omega'$, for a constant $c$ depending only on $n,p,\ratio$, but otherwise independent of $\ep$. It goes without saying that the function \rif{tildev} is now the one defined in \rif{tildev} while $v_\ep$ has been introduced in \rif{subso}.

{\em Step 4: A priori estimate for $Du$.} We now derive the a priori estimate for $\|Du\|_{L^\infty}\equiv \|Du_\ep\|_{L^\infty}$, which will be of course uniform with respect to $\ep>0$. In terms of $Du\equiv Du_\ep$ and $V\equiv V_\ep$ estimate \rif{app1} amounts to have
\begin{eqnarray}
\nonumber (s_\ep^2+|Du(x)|^2)^{\frac{p}{2}}&\leq &c\left(\mean{B(x,\tr)} (s_\ep^2+|Du|^2)^{p}\, dy \right)^{\frac{1}{2}}\\&&\qquad +c\left( s_\ep^2+\|Du\|_{L^{\infty}(B(x,\tr))}^2\right)^{\frac{1}{2}}{\bf P}^{V}(x,\tr)\label{cover1}
\end{eqnarray}
whenever $B(x, \tr) \subset \Omega'$. Note that since the radii considered are arbitrary in \rif{app1}, upon enlarging the constants involved by
a factor essentially depending on $n$, we may avoid considering ${\bf P}^{\tilde V}(x,2\tr)$ and use ${\bf P}^{\tilde V}(x,\tr)$. We now fix a ball $B_{R}
\subset \Omega$ and related concentric balls $B_{R /2} \subset B_\varrho\subset B_r \subset B_{R}$ with $R/2 < \varrho \leq r < R$. We use \rif{cover1} with $x \in B_{\varrho}$ and with $\tr = r-\varrho$ in such a way that $B(x,\tr) \subset B_{r}$; at this stage we recall that $V$ is defined on all $\er^n$. It follows that
\begin{eqnarray*}
\nonumber \left( s_\ep^2+\|Du\|_{L^{\infty}(B_\varrho)}^2\right)^{\frac{p}{2}}&\leq &\frac{c}{(r -\varrho)^{\frac{n}{2}}}\left(\int_{B_r} (s_\ep^2+|Du|^2)^{p}\, dy \right)^{\frac{1}{2}}\\&&\  +c\left( s_\ep^2+\|Du\|_{L^{\infty}(B_r)}^2\right)^{\frac{1}{2}}\|{\bf P}^{V}(\cdot,R)\|_{L^\infty(B_R)}
\end{eqnarray*}
and therefore
\begin{eqnarray}
\nonumber \left( s_\ep^2+\|Du\|_{L^{\infty}(B_\varrho)}^2\right)^{\frac{p}{2}}& \leq & \frac{c( s_\ep^2+\|Du\|_{L^{\infty}(B_r)}^2)^{\frac{p}{4}}}{(r -\varrho)^{\frac{n}{2}}}\left(\int_{B_r} (s_\ep^2+|Du|^2)^{\frac{p}{2}}\, dy \right)^{\frac{1}{2}}\\&&\qquad  +c\left( s_\ep^2+\|Du\|_{L^{\infty}(B_r)}^2\right)^{\frac{1}{2}}\|{\bf P}^{V}(\cdot,R)\|_{L^\infty(B_R)}\,.\label{dopop}
\end{eqnarray}
By using Young's inequality we obtain
\begin{eqnarray*}
\nonumber \left( s_\ep^2+\|Du\|_{L^{\infty}(B_\varrho)}^2\right)^{\frac{p}{2}}&\leq &\frac{1}{2}\left( s_\ep^2+\|Du\|_{L^{\infty}(B_r)}^2\right)^{\frac{p}{2}} \\&&  +\frac{c}{(r -\varrho)^{n}}\int_{B_R} (s_\ep^2+|Du|^2)^{\frac{p}{2}}\, dy+ c\|{\bf P}^{V}(\cdot,R)\|_{L^\infty(B_R)}^{\frac{p}{p-1}}\,.
\end{eqnarray*}
We now apply Lemma \ref{simpfun} with the choice
$$
\varphi(t):=\left( s_\ep^2+\|Du\|_{L^{\infty}(B_t)}^2\right)^{\frac{p}{2}}
$$
and deduce that
\eqn{aaee}
$$
\left( s_\ep^2+\|Du_\ep\|_{L^{\infty}(B_{R/2})}^2\right)^{\frac{p}{2}}\leq c \mean{B_R} (s_\ep^2+|Du_\ep|^2)^{\frac{p}{2}}\, dy  + c \|{\bf P}^{V_\ep}(\cdot,R)\|_{L^\infty(B_R)}^{\frac{p}{p-1}}\,,
$$
with a constant $c$ depending only on $n,p,\ratio$; we recovered here the full notation with the subscript $\ep$ everywhere. This is the final a priori estimate for $Du_\ep$ we were looking for.

{\em Step 5: Passage to the limit and conclusion.} This is now a rather standard procedure; for completeness we
 briefly recall the convergence argument. We consider the weak formulations
$$
\int_{\Omega'} \langle a_\ep(Du_\ep), D\varphi\rangle \, dx = \int_{\Omega'} V_\ep\varphi \, dx
$$
and choose $\varphi =u_\ep-u$ as a testing function. Using \rif{asp}, \rif{mony} and Young's inequality in a standard way we obtain
$$
\int_{\Omega'} |Du_\ep|^p\, dx \leq c \int_{\Omega'} (|Du|^p+s^p)\, dx + c\int_{\Omega'} |V||u_\ep-u|\, dx \,,
$$
for a constant $c \equiv c(n,p,\ratio)$. We now distinguish two cases; the first is when $p\geq 2$, then we use Young's inequality with $\delta \in (0,1)$ and Poincar\'e's inequality in order to have
$$
\int_{\Omega'} |V||u_\ep-u|\, dx \leq  \delta\int_{\Omega'} (|Du_\ep|+|Du|)^{p}\, dx+c(\delta ) \int_{\Omega'} |V|^{p'}\, dx
$$
so that, after properly choosing $\delta$, the last two estimates and $p'\leq 2$ give
$$
\int_{\Omega'} |Du_\ep|^p\, dx \leq c \int_{\Omega'} (|Du|^p+s^p)\, dx +c \int_{\Omega'} (1+|V|^{2})\, dx\,.
$$ In the case $1<p< 2$ by using Young's inequality with $\delta \in (0,1)$ and Sobolev embedding we find
$$
\int_{\Omega'} |V||u_\ep-u|\, dx \leq  \delta\int_{\Omega'} (|Du_\ep|+|Du|)^{p}\, dx+c(\delta ) \left(\int_{\Omega'} |V|^{(p^*)'}\, dx\right)^{\frac{p'}{(p^*)'}}
$$
so that, after properly choosing $\delta$ we have
$$
\int_{\Omega'} |Du_\ep|^p\, dx \leq c \int_{\Omega'} (|Du|^p+s^p)\, dx +c \left(\int_{\Omega'} (1+|V|^{m})\, dx\right)^{\frac{p'}{(p^*)'}}\,.
$$Here we are using the standard notation, i.e.\ by $p^*$ we denote Sobolev's conjugate exponent $np/(n-p)$ if $p< n$, respectively any number larger than $2$
otherwise; accordingly $(p^*)'$ denotes its H\"older
conjugate, that is $p^*/(p^*-1)$.
In any case we come up with bound
\eqn{uunn}
$$\|Du_\ep\|_{L^p(\Omega)}\leq c $$ which is uniform in $\ep$. We are now ready to prove that
\eqn{forte}
$$
u_\ep \to u \qquad \mbox{strongly in} \ W^{1,p}(\Omega')\,.
$$
The fact that both $u_\ep$ and $u$ are solutions now gives
\begin{align}
\nonumber \int_{\Omega'} \langle a_\ep(Du_\ep)&-a_\ep(Du), D\varphi\rangle \, dx \\
&= \int_{\Omega'} \langle a(Du)-a_\ep(Du), D\varphi\rangle \, dx + \int_{\Omega'} (V_\ep-V)\varphi \, dx \label{euno}
\end{align}
which holds whenever $\varphi \in C^\infty_0(\Omega)$; we test \rif{euno} with $\varphi=u_\ep-u$.
We notice that by \rif{unic} and \rif{uunn} it follows that
\eqn{edue}
$$
III_\ep :=\left|\int_{\Omega'} \langle a(Du)-a_\ep(Du), D(u_\ep -u)\rangle \, dx \right|\to 0\,.
$$
As for the left-hand side in \rif{euno}, by \rif{mon3} we have
\eqn{intF}
$$
\int_{\Omega'} |W(Du_\ep)-W(Du)|^2 \, dx\leq c
\int_{\Omega'} \langle a_\ep(Du_\ep)-a_\ep(Du),D(u_\ep -u)\rangle \, dx \,.
$$ Now we observe that
\eqn{conin}
$$
\int_{\Omega'} |Du_\ep-Du|^p \, dx\leq c \left(\int_{\Omega'} \langle a_\ep(Du_\ep)-a_\ep(Du), D(u_\ep -u)\rangle \, dx \right)^{\min\{1,p/2\}}\,,
$$
holds for a constant $c$ independent on $\ep$. Indeed, in the case $p \geq 2$ \rif{conin} is a trivial consequence of \rif{mon2}; in the case $1< p< 2 $ H\"older's inequality and \rif{V} give
\begin{eqnarray*}
&& \int_{\Omega'} |Du_\ep-Du|^p \, dx\\ && \leq  c\left(\int_{\Omega'} (|Du_\ep|^p+|Du|^p) \, dx \right)^{\frac{2-p}{2}}
 \left(\int_{\Omega'}  |W(Du_\ep)-W(Du)|^2 \, dx \right)^{\frac{p}{2}}
\end{eqnarray*}
and again \rif{conin} holds by mean of \rif{mon3}, \rif{uunn} and \rif{intF}. Finally, we turn estimating the last term in \rif{euno}; when $p\geq 2$  we have
$$
\left|\int_{\Omega'} (V_\ep-V)(u_\ep-u) \, dx\right|\leq c\|V_\ep-V\|_{L^{p'}(\Omega')}\| Du_\ep-Du\|_{L^{p}(\Omega')}\,,
$$
Combining this last estimate with \rif{euno}-\rif{conin} and using Young's inequality yields
$$
\| Du_\ep-Du\|_{L^{p}(\Omega')}\leq c \|V_\ep-V\|_{L^{p'}(\Omega')}^{p'} + cIII_\ep\,,
$$
so that \rif{forte} follows as $p'\leq 2$ when $p \geq2$.
When $p<2$ we estimate
$$
\left|\int_{\Omega'} (V_\ep-V)(u_\ep-u) \, dx\right|\leq c\|V_\ep-V\|_{L^{(p^*)'}(\Omega')}\| Du_\ep-Du\|_{L^{p}(\Omega')}\,,
$$
for a constant independent of $\ep$.
Using this last inequality together with \rif{euno}-\rif{edue} we arrive at
$$
\| Du_\ep-Du\|_{L^{p}(\Omega')}^2\leq c \|V_\ep-V\|_{L^{(p^*)'}(\Omega')}\| Du_\ep-Du\|_{L^{p}(\Omega')} + cIII_\ep\,.
$$
Once again \rif{forte} follows via Young's inequality and taking \rif{VVV} into account. Finally, \rif{apl} follows letting $\ep \to 0$ in \rif{aaee} and using lower semicontinuity to deal with the left and side of the inequality as follows:
\begin{eqnarray}
\nonumber\|Du\|_{L^{\infty}(B_{R/2})}^p& \leq &\liminf_{\ep \to 0}\,( s_\ep^2+\|Du_\ep\|_{L^{\infty}(B_{R/2})}^2)^{\frac{p}{2}}\\ \nonumber &\leq& c \lim_{\ep \to 0} \, \mean{B_R} (s_\ep^2+|Du_\ep|^2)^{\frac{p}{2}}\, dy  + c \|{\bf P}^{V}(\cdot,R)\|_{L^\infty(B_R)}^{\frac{p}{p-1}}\\ &=& c\mean{B_R} (s^2+|Du|^2)^{\frac{p}{2}}\, dy  + c \|{\bf P}^{V}(\cdot,R)\|_{L^\infty(B_R)}^{\frac{p}{p-1}}
\,.\label{semi}
\end{eqnarray}
The proof of Theorem \ref{main01} is complete.
\begin{remark}\label{discrete2} In the last line of \rif{discrete} we have used the following estimation, which is standard when dealing with non linear potentials as ${\bf P}^V$:
\begin{eqnarray*}
{\bf P}^{\tilde V}(x, 2R) & = &\sum_{j=0}^{\infty} \int_{R_{j+1}}^{R_j} \left(\frac{|\tilde V|^2(B(x,\varrho))}{\varrho^{n-2}}\right)^{\frac{1}{2}}\, \frac{d\varrho}{\varrho}\\
&\geq & \sum_{j=0}^{\infty} \int_{R_{j+1}}^{R_j} \left(\frac{|\tilde V|^2(B(x,R_{j+1}))}{R_j^{n-2}}\right)^{\frac{1}{2}}\, \frac{d\varrho}{\varrho}\\
&\geq & \frac{\log 2}{2^{\frac{n-2}{2}}}\sum_{j=0}^{\infty} \left(\frac{|\tilde V|^2(B(x,R_{j+1}))}{R_{j+1}^{n-2}}\right)^{\frac{1}{2}}=c(n)\sum_{j=1}^{\infty} \left(\frac{|\tilde V|^2(B(x,R_{j}))}{R_{j}^{n-2}}\right)^{\frac{1}{2}}.
\end{eqnarray*}
\end{remark}
\section{Theorem \ref{mainu}}\label{seclo}
We first recall a few basic definitions relevant in order to deal with Lorentz spaces; we shall use the notion of decreasing rearrangement. Let $\mu\colon \Omega \to \er^k$, with $k \in \en$, being a measurable map such that
$
|\{x \in \Omega \, :\, |\mu(x)|>t \}| < \infty$ for every $t>0$.
Again we assume that $\mu$ is extended to the whole $\er^n$ letting $\mu \equiv 0$ outside $\Omega$. The decreasing rearrangement $\mu^*\colon [0,\infty]\to [0,\infty]$ is pointwise defined by
$$
\mu^*(s):=\sup\, \{t\geq 0 \, : \, |\{x \in \er^n \, : \, |\mu(x)|>t\}|>s\}\,.
$$
This is in other words the (unique) non-increasing, right continuous decreasing function which is equi-distributed with $|\mu(\cdot)|$, i.e.
$
|\{|\mu|> t\}|=|\{\mu^*>t\}|
$
holds whenever $t \geq 0$.
Now, the usual definition of the Lorentz space $L(\gamma, q)$, for $\gamma \in (0,\infty)$ and $q \in (0,\infty)$ prescribes that
$$
[\mu]_{L(\gamma,q)}:=\left(\frac{q}{\gamma}\int_0^\infty \left(\mu^*(\varrho)\varrho^{1/\gamma}\right)^q\, \frac{d\varrho}{\varrho}\right)^{\frac{1}{q}}< \infty\,.
$$
Moreover, by Fubini's theorem it also follows that
\eqn{equiequi}
$$
[\mu]_{L(\gamma,q)}=\left(q\int_0^\infty (\lambda^\gamma|\{|\mu|> \lambda\}|)^\frac{q}{\gamma}\, \frac{d\lambda}{\lambda}\right)^{\frac{1}{q}}\,.
$$
Lorentz spaces refine the standard Lebesgue spaces in the sense that the second index tunes the first in the following sense
whenever $0 < q < t  < r <  \infty$
we have, with continuous embeddings, that
$$
L^r \equiv L(r,r)\subset L(t,q) \subset L(t,t) \subset L(t,r)\subset L(q,q)\equiv L^q\,,$$
while all the previous inclusions are strict. We shall use a characterization of Lorentz spaces using an averaged version of $\mu^*$, due to Hunt \cite{Hunt}; let us consider, for $s>0$, the following maximal operator:
\eqn{muss}
$$
\mu^{**}(s):=\frac{1}{s}\int_0^s \mu^{*}(t)\, dt
$$
and, accordingly, for $q < \infty$
$$
\|\mu\|_{L(\gamma,q)}:=\left(\frac{q}{\gamma}\int_0^\infty \left(\mu^{**}(\varrho)\varrho^{1/\gamma}\right)^q\, \frac{d\varrho}{\varrho}\right)^{\frac{1}{q}}\,.
$$
Then - see for instance \cite[Theorem 3.21]{steinweiss} - it holds that
\eqn{oneil}
$$
[\mu]_{L(\gamma,q)}\leq \|\mu\|_{L(\gamma,q)}\leq c(\gamma,q)[\mu]_{L(\gamma,q)}\qquad \qquad \mbox{for}\ \gamma >1\,.
$$
The following Lemma can be obtained exactly as \cite[Lemma 3.1]{DMc}.
\begin{lemma}\label{potd} Let $V \in L^2(\er^n)$; for every $R >0$ it holds that
\eqn{mpotl}
$$\sup_{x \in \Omega} {\bf P}^V(x,R)\leq c \int_0^{2\omega_nR^n} \left(\left(|V|^{2}\right)^{**}(\varrho )\varrho^{\frac{2}{n}}\right)^{\frac{1}{2}}\,  \frac{d\varrho}{\varrho} \,,$$
where the constant $c$ depends only on $n$ and $\omega_n:= |B_1|$ is the measure of the unit ball in $\er^n$.
\end{lemma}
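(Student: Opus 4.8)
The plan is to reduce the estimate to the classical Hardy--Littlewood inequality relating the integral of a nonnegative function over a set to the integral of its decreasing rearrangement over an interval of the same measure, namely $\int_E g\, dy\leq \int_0^{|E|}g^*(t)\, dt$ for every measurable $E$ with finite measure. This is exactly the computation of \cite[Lemma 3.1]{DMc}, transcribed to the potential ${\bf P}^V$.

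First I would fix $x\in\Omega$ and $\varrho>0$ and apply this inequality with $g=|V|^2$ and $E=B(x,\varrho)$; recall that $V$ is extended by zero outside $\Omega$, so that the rearrangement is legitimately taken over all of $\er^n$ even when $B(x,\varrho)\not\subset\Omega$, and $|E|=\omega_n\varrho^n$. Since $\int_0^s(|V|^2)^*(t)\, dt=s\,(|V|^2)^{**}(s)$ by the very definition \rif{muss}, this gives
$$
|V|^2(B(x,\varrho))=\int_{B(x,\varrho)}|V(y)|^2\, dy\leq \int_0^{\omega_n\varrho^n}(|V|^2)^*(t)\, dt=\omega_n\varrho^n\,(|V|^2)^{**}(\omega_n\varrho^n)\,,
$$
and the right-hand side no longer depends on $x$.

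Next I would plug this into the definition \rif{nonp} of ${\bf P}^V(x,R)$, obtaining
$$
{\bf P}^V(x,R)\leq \int_0^R\left(\frac{\omega_n\varrho^n\,(|V|^2)^{**}(\omega_n\varrho^n)}{\varrho^{n-2}}\right)^{\frac12}\frac{d\varrho}{\varrho}=\sqrt{\omega_n}\int_0^R\left(\varrho^2\,(|V|^2)^{**}(\omega_n\varrho^n)\right)^{\frac12}\frac{d\varrho}{\varrho}\,,
$$
and then take the supremum over $x\in\Omega$, which is harmless since the bound is $x$-free. Finally I would change variables $\sigma=\omega_n\varrho^n$, under which $d\varrho/\varrho=\tfrac1n\,d\sigma/\sigma$ and $\varrho^2=(\sigma/\omega_n)^{2/n}$, so that the integral becomes $c(n)\int_0^{\omega_nR^n}(\sigma^{2/n}(|V|^2)^{**}(\sigma))^{1/2}\,d\sigma/\sigma$ with $c(n)=\omega_n^{1/2-1/n}/n$; since the integrand is nonnegative the upper limit can be freely enlarged from $\omega_nR^n$ to $2\omega_nR^n$, which yields precisely \rif{mpotl}.

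As for the main obstacle: there really is not a substantial one here, which is why the statement is simply attributed to \cite{DMc}. The only points requiring care are the bookkeeping around the zero extension of $V$ (so that rearrangements make sense globally and balls need not sit inside $\Omega$) and keeping the change of variables and the resulting dimensional constant straight; monotonicity of $(|V|^2)^{**}$ is available should it be wanted, but it is not actually used in the estimate.
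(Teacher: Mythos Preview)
Your argument is correct and is precisely the computation behind \cite[Lemma 3.1]{DMc}, which is all the paper invokes here; the Hardy--Littlewood rearrangement bound followed by the change of variable $\sigma=\omega_n\varrho^n$ is exactly the intended route. There is nothing to add.
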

\begin{proof}[Proof of Theorem \ref{mainu}] We again recall that, up to letting $V\equiv 0$ outside $\Omega$, we can assume that $V \in L(n,1)(\er^n)$. We first observe that
$$
\int_0^{\infty} \left(\left(|V|^{2}\right)^{**}(\varrho )\varrho^{\frac{2}{n}}\right)^{\frac{1}{2}}\,  \frac{d\varrho}{\varrho} \leq \||V|^2\|_{L(n/2,1/2)}^{1/2}\,,
$$
and this in view of \rif{oneil} and $n>2$. The last inequality together with \rif{mpotl} give
$$
\sup_{x \in \Omega} {\bf P}^V(x,R)\leq c\||V|^2\|_{L(n/2,1/2)}^{1/2}\,.
$$
In turn we observe that from the definition of Lorentz spaces we have
$
V\in L\left(n,1\right)$ iff $|V|^{2}  \in L(n/2,1/2)
$
and moreover, by mean of \rif{equiequi}, it is easy to see that
$$
[|V|^2]_{L(n/2, 1/2)}=[V]_{L(n, 1)}^2\,.
$$
Connecting the last two inequalities and using \rif{oneil} again we finally obtain
\eqn{localo}
$$
\sup_{x \in \Omega} {\bf P}^V(x,R)\leq c\|V\|_{L(n,1)}\,.
$$
At this point the local boundedness of $Du$ follows by Theorem \ref{main01} and the last inequality; moreover, estimate \rif{stimalo} follows from \rif{apl} and a standard covering argument using a localization of \rif{localo}.
\end{proof}
\section {Theorem \ref{main02}}\label{thmain02}
Here we give the modifications to the proof of Theorem \ref{main01} which are necessary to deal with the vectorial case $N >1$, therefore obtaining Theorem \ref{main02}; the point is that the structure assumption \rif{uh} allows to recover the Caccioppoli estimate \rif{cacci} also in the vectorial case. Needless to say, the approximation scheme remains essentially unchanged and we are going to show that inequality \rif{cacci} holds with the same meaning of $v\equiv v_\ep$ introduced in \rif{subso}; the only difference is that in order to consider approximating regularized problems we shall use the vector fields in \rif{appvec} rather than those in \rif{reg}, in order to keep for the regularized vector fields the crucial structure condition \rif{uh}. Then the rest of the proof proceeds unchanged with respect to the scalar case $N=1$. For this
reason the proof is structured following the various steps made in
the proof of Theorem \ref{subso}.

{\em Step 1: Approximation.} The approximation is similar to the one settled down for the proof of Theorem \ref{main01}; the truncation of the potential is the one in \rif{truncation} but it must be of course done componentwise, that is
\eqn{truncation2}
$$
V_{\ep}^\alpha(x):=\max\{\min\{V^\alpha(x),1/\ep\}, -1/\ep\}\,,\qquad \alpha \in \{1,\ldots,N\}\,,
$$
while the approximation of the vector field will be done using the regularized vector fields defined in \rif{appvec}.
According to the convention already used for the proof of Theorem \ref{main01} we shall abbreviate $g(\cdot)\equiv g_\ep(\cdot)$, $a(\cdot)\equiv a_\ep(\cdot)$ and $V^i \equiv V_\ep^i$, eventually recovering the full notation when convenient.

{\em Step 2: Caccioppoli inequality for $v$.} We keep the notation introduced for the proof of Theorem \ref{main01}. We restart from \rif{sys1}, that this time takes the form
$$
\int_{\Omega}  a_i^\alpha(Du)D_i\varphi^\alpha\, dx   = \int_{\Omega} \varphi^\alpha V^\alpha \, d x\,,
$$
for $\alpha \in \{1,\ldots,N\}$. We again use $D_s \varphi$ instead of $\varphi$ for $s \in \{1,\ldots,n\}$; integration by parts then yields
\eqn{l1v}
 $$
\int_{\Omega} \partial_{z_j^\beta}a_i^\alpha(Du)D_jD_su^\beta D_i\varphi^\alpha \, dx = -\int_{\Omega} D_s\varphi^\alpha V^\alpha \, dx \,.
$$
for $\alpha,\beta \in \{1,\ldots,N\}$. As a test function we again take $
\varphi:= \eta^2(v-k)_+ D_su
$ and sum up over $s \in \{1,\ldots,n\}$; we have
\begin{eqnarray*}
 I_1 + I_2 + I_3 &:=&  \int_{\Omega} \eta^2\partial_{z_j^\beta}a_i^\alpha(Du) D_jD_su^\beta D_iD_su^\alpha (v-k)_+ \, dx \\
 && \qquad   + \int_{\Omega} \eta^2\partial_{z_j^\beta}a_i^\alpha(Du) D_jD_su^\beta D_su^\alpha D_i(v-k)_+ \, dx\\
 && \qquad   + 2\int_{\Omega} \eta\partial_{z_j^\beta}a_i^\alpha(Du) D_jD_su^\beta  D_su^\alpha (v-k)_+D_i\eta \, dx\\
    &=&   - \int_{\Omega}  \eta^2D_sD_su^\alpha (v-k)_+ V^\alpha\, dx - \int_{\Omega}  \eta^2D_su^\alpha D_s(v-k)_+V^\alpha \, dx \\
   && \qquad   - 2\int_{\Omega} \eta D_s\eta D_su^\alpha (v-k)_+V^\alpha\, dx =: II_1+ II_2+ II_3\,.
\end{eqnarray*}We will just have to estimate the terms on the left hand side; the estimation for the ones appearing on the right hand side
will be then analogous to the one shown in the previous section; see estimates \rif{IIuno}-\rif{pdopo}. As in \rif{i5} by ellipticity we have
\eqn{i52}
$$
I_5:= \int_{\Omega} \eta^2H^{p-2}(Du)|D^2u|^2 (v-k)_+ \, dx\leq c I_1\,.
$$
On the other hand let us observe that \rif{uh} implies
\eqn{identity}
$$
\partial_{z_j^\beta}a_i^\alpha(z) = 2
g'(|z|^2) z_i^{\alpha}z_j^{\beta} +
g(|z|^2) \delta_{ij}\delta^{\alpha\beta}\,,
$$
where the $\delta_{ij}\delta^{\alpha\beta}$ denotes the Kronecker's symbol. Moreover, let us define
\eqn{identity2}
$$
\tilde{a}_{i,j}^{\alpha,\beta}(x):= \frac{\partial_{z_j^\beta}a_i^\alpha(Du(x))}{(s_\ep^2 + |Du(x)|^2)^{\frac{p-2}{2}}}\,,
$$
which is again a uniformly elliptic matrix, and uniformly with respect to
 $\ep$. Summing upon all repeated indexes and using Einstein's summation yields
\begin{eqnarray*}
 I_2 &=&\int_{\Omega} \eta^2\partial_{z_j^\beta}a_i^\alpha(Du) D_jD_su^\beta D_su^\alpha D_i(v-k)_+ \, dx\\
  & = & \int_{\Omega} \eta^2 2g'(|Du|^2) D_iu^{\alpha}D_ju^{\beta}
D_{j}D_su^{\beta}D_su^{\alpha} D_i(v-k)_+  dx\\
&&\quad  +  \int_{\Omega}\eta^2
g(|Du|^2)\delta_{ij}\delta^{\alpha\beta} D_{j}D_su^{\beta}D_su^{\alpha}D_i(v-k)_+\,
   dx =: I_{2,1}+I_{2,2}\,.
  \end{eqnarray*}
Now, using that
$
D_s v =p H^{p-2}(Du) D_jD_su^\beta D_ju^\beta,
$
we have
\begin{eqnarray*}
I_{2,1}&= &\sum_{\alpha,\beta, i,j,s}\int_{\Omega} \eta^22g'(|Du|^2) D_iu^{\alpha}D_su^{\alpha}
D_{j}D_su^{\beta}D_ju^{\beta} D_i(v-k)_+\, dx\\
&= &{\textstyle \frac{1}{p}}\sum_{\alpha, i,s}\int_{\Omega} \eta^22 H^{2-p}(Du)g'(|Du|^2) D_iu^{\alpha}D_su^{\alpha}
D_s(v-k)_+ D_i(v-k)_+\, dx\\
&= &{\textstyle \frac{1}{p}}\sum_{\alpha, i,j}\int_{\Omega} \eta^22H^{2-p}(Du)g'(|Du|^2) D_iu^{\alpha}D_ju^{\alpha}
D_i(v-k)_+ D_j(v-k)_+\, dx
  \end{eqnarray*}and
\begin{eqnarray*}
I_{2,2}&= &\sum_{\alpha, i,j,s}\int_{\Omega} \eta^2g(|Du|^2) \delta_{ij}\delta^{\alpha\alpha}D_{j}D_su^{\alpha}D_ju^{\alpha} D_i(v-k)_+\, dx\\
&= &{\textstyle \frac{1}{p}}\int_{\Omega} \eta^2H^{2-p}(Du)g(|Du|^2) \delta_{ij}D_i(v-k)_+D_j(v-k)_+\, dx\,.
  \end{eqnarray*}
Therefore, connecting the last three equalities and using \rif{identity}-\rif{identity2} we have
\begin{eqnarray*}
I_2 & =&{\textstyle \frac{1}{p}}\int_{\Omega} \eta^2H^{2-p}(Du)\big[2g'(|Du|^2) D_iu^{\alpha}D_ju^{\alpha}
\\ && \hspace{3cm}+g(|Du|^2) \delta_{ij}\delta^{\alpha\alpha}\big] D_i(v-k)_+D_j(v-k)_+\, dx\\
   & = &  {\textstyle \frac{1}{p}} \int_{\Omega} \eta^2 \tilde a_{i,j}^{\alpha,\alpha} D_i(v-k)_+D_j(v-k)_+
 \, dx\,.
\end{eqnarray*}
As a consequence we gain
$$
\nu \int_{\Omega} |D (v-k)_+|^2\eta^2 \, dx \leq pI_2\,.
$$
In a similar way we have
\begin{eqnarray*}
 I_3 &=&2\int_{\Omega} \eta(v-k)_+\partial_{z_j^\beta}a_i^\alpha(Du) D_jD_su^\beta D_su^\alpha D_i\eta \, dx\\
  & =  & 2\int_{\Omega} \eta(v-k)_+2g'(|Du|^2) D_iu^{\alpha}D_ju^{\beta}
D_{j}D_su^{\beta}D_su^{\alpha} D_i\eta \, dx\\
&&\hspace{2cm}+  2\int_{\Omega}\eta
g(|Du|^2)\delta_{ij}\delta^{\alpha\beta} D_{j}D_su^{\beta}D_su^{\alpha}D_i\eta \,
   dx \\
  &  =&  {\textstyle \frac{2}{p}}\int_{\Omega} \eta(v-k)_+H^{2-p}(Du)\big[2g'(|Du|^2) D_iu^{\alpha}D_ju^{\alpha}\\ && \hspace{3cm}
+g(|Du|^2) \delta_{is}\delta^{\alpha\alpha}\big]D_j(v-k)_+ D_i\eta\, dx
 \\
  & =   &  {\textstyle \frac{2}{p}}\int_{\Omega} \eta(v-k)_+ \tilde a_{i,j}^{\alpha,\alpha}(Du) D_j(v-k)_+ D_i\eta
 \, dx\,.
\end{eqnarray*}
Therefore using Young's inequality with $\delta \in (0,1)$ we have
\begin{eqnarray*}
 |I_3| &\leq &c \int_{\Omega} \eta|D\eta|(v-k)_+|D(v-k)_+| \, dx \\ &
\leq &\delta  \int_{\Omega} \eta^2 |D (v-k)_+|^2\, dx +  c(\delta)\int_{\Omega} |D \eta|^2(v-k)_+^2 \, dx\,.
\end{eqnarray*}
Connecting the previous estimates and choosing $\delta$ small enough in order to reabsorb terms we once again obtain
$$I_5 +  \int_{\Omega} \eta^2|D (v-k)_+|^2 \, dx\leq c\int_{\Omega} |D \eta|^2(v-k)_+^2 \, dx + c|II_1+II_2+III_3|\,.
$$
At this point we may proceed exactly as after \rif{inter} and we finally arrive at \rif{cacci}.

{\em Step 3: Conclusion.} Inequality \rif{cacci} in turn implies the a priori estimate \rif{aaee} exactly as for the proof of Theorem \ref{main01}. The passage to the limit applies as in the scalar case. Finally the Lipschitz regularity assertion under the assumption $V \in L(n,1)$ follows exactly as in Section \ref{seclo}.

\section{A priori estimates for Theorems \ref{maind}-\ref{maind3}}\label{apcomp}
Here we show a priori estimates for solutions in the context of Theorems \ref{maind}-\ref{maind2}; a final remark clarifies the case of estimates for Theorem \ref{maind3}, that is the vectorial case $N>1$. The estimates derived here will be combined with the approximation scheme of the next section and for this reason will be given for a priori more regular solutions; specifically, we shall argue under the regularity assumptions of Step 2 in Theorem \ref{main01}. Therefore we assume to have a solution $u$ of \rif{aapp2}
such that \rif{serve}-\rif{serve2} are satisfied; moreover, we shall assume that $V$ is bounded. Finally, the vector field $a(\cdot)$ will satisfy \rif{asp} with $s>0$.

We shall modify the proof given in Section \ref{secth}, restarting from \rif{l1}, which now becomes
 $$
\int_{\Omega} \partial_{z_j}a_i(Du)D_jD_su D_i\varphi \, dx = -\int_{\Omega} b(x,u,Du)VD_s\varphi  \, dx \,,
$$
whenever $s \in \{1,\ldots,n\}$.
We develop the left hand side as after \rif{l1} and arrive at \rif{bbb} where in the right hand side terms the function $V$ must be replaced by $b(x,u,Du)V$. In turn
in the estimation of the terms $II_1$, $II_2$ and $II_3$ we shall use the bound
$$
\eta|b(x,u,Du)Du|\leq \eta\left(\Gamma + \|Du\|_{L^{\infty}({\rm supp}\, \eta)}\right)^{q+1}\,.
$$
Therefore the new estimates are now
\begin{align*}
 |II_1| &\leq  c\int_{\Omega}\eta^2 |D^2u| (v-k)_+ |b(x,u,Du)||V|\, dx \\
 &\leq  c
 (s+\Gamma+\|Du\|_{L^{\infty}})^{q+\frac12}
 \int_{\Omega}\eta^2
 [(v-k)_+]^\frac{1}{2}H(Du)^{\frac{p-2}{2}} |D^2u||V |\, dx\\
 & \leq \delta I_5+ c(\delta)( s+\Gamma+\|Du\|_{L^{\infty}})^{2(q+1)}\int_{\Omega}\eta^2 |V |^2\, dx\,,
\end{align*}
$$
|II_2| \leq \delta \int_{\Omega} |D(v-k)_+|^2\eta^2\, dx  + c(\delta)
\left(\Gamma+\|Du\|_{L^{\infty}({\rm supp}\, \eta)}\right)^{2(q+1)} \int_{\Omega}\eta^2  |V|^2\, dx\,,$$
and
$$ |II_3| \leq c\int_{\Omega} (v-k)_+^2|D\eta|^2\, dx + c \left(\Gamma+\|Du\|_{L^{\infty}({\rm supp}\, \eta)}\right)^{2(q+1)} \int_{\Omega} \eta^2 |V|^2\, dx\,. $$
Using such estimates and proceeding as after \rif{pdopo} we finally arrive at \rif{cacci}, but this time with
\eqn{VV}
$$\tilde V := \left( s+\Gamma+\|Du\|_{L^{\infty}(B_{R})}\right)^{q+1}V\,.$$
Having arrived at this stage we proceed as for Steps 2 and 3 in the proof of Theorem \ref{main01} and conclude that
$$
|v(x)|\leq c \left(\mean{B(x,R)} |v|^2\, dy \right)^{\frac{1}{2}} + c\,{\bf P}^{\tilde{V}}(x,2R)\,,
$$
holds whenever $B(x,R) \subset \Omega'$;
in turn with \rif{VV} this implies, after some elementary manipulations, the following analog of \rif{cover1}:
\begin{eqnarray}
\nonumber \big(s +\Gamma+|Du(x)|\big)^p
&\leq &c\left(\mean{B(x,\tr)} \big(s+\Gamma+|Du|\big)^{2p}\, dy \right)^{\frac{1}{2}}\\&&\qquad +c\left( s +\Gamma+\|Du\|_{L^{\infty}(B(x,\tr))}\right)^{q+1}
{\bf P}^{V}(x,\tr)\,,\label{cover1g}
\end{eqnarray}
whenever $B(x, \tr) \subset \Omega'$.
We now proceed as after \rif{dopop} and arrive at
\begin{eqnarray}
\nonumber && \big( s +\Gamma+\|Du\|_{L^{\infty}(B_\varrho)}\big)^{p}\\ && \nonumber \qquad
\leq \frac{c\left( s +\Gamma+\|Du\|_{L^{\infty}(B_r)}\right)^{\frac{2p-t}{2}}}{(r -\varrho)^{n/2}}\left(\int_{B_r} \big(s+\Gamma+|Du|\big)^{t}\, dy \right)^{\frac{1}{2}}\\
&&\qquad \qquad  +c\left( s+\Gamma+\|Du\|_{L^{\infty}(B_r)}\right)^{q+1}\|{\bf P}^{V}(\cdot,R)\|_{L^\infty(B_R)}\,,\label{ssss1}
\end{eqnarray}
where $t <2 p$ is a positive number and $c$ is a constant which depends only on $n,p,\ratio$.
We now distinguish two cases; the first is when $q < p-1$. In this case we use Young's inequality obtaining
\begin{eqnarray}
\nonumber && \left( s+\Gamma+\|Du\|_{L^{\infty}(B_\varrho)}\right)^{p}\leq {\textstyle \frac{1}{2}}
\left( s+\Gamma+\|Du\|_{L^{\infty}(B_r)}\right)^{p} \\&&\qquad \qquad  +\frac{c}{(r -\varrho)^{np/t}}\left(\int_{B_R} \big(s+\Gamma+|Du|\big)^{t}\, dy\right)^\frac{p}{t}+ c\|{\bf P}^{V}(\cdot,R)\|_{L^\infty(B_R)}^{\frac{p}{p-q+1}}\,,\label{cover11111}
\end{eqnarray}
where $c$ now also depends on $q$.
We now apply again \rif{simpfun} and conclude with
\begin{eqnarray}
\nonumber \left( s+\Gamma+\|Du_\ep\|_{L^{\infty}(B_{R/2})}\right)^{p} & \leq  & c \left(\mean{B_R} \big(s+\Gamma+|Du_\ep|\big)^{t}\, dy \right)^{\frac{p}{t}}\\ && \qquad  + c \|{\bf P}^{V_\ep}(\cdot,R)\|_{L^\infty(B_R)}^{\frac{p}{p-q+1}}\,.\label{aaeegg}
\end{eqnarray}
Taking $t=p$
the a priori estimate \rif{aes2} follows.
The second case is the critical one, that is when $q = p-1$; in this case instead of \rif{ssss1} we have
\begin{eqnarray}
\nonumber && \left( s+\Gamma+\|Du\|_{L^{\infty}(B_\varrho)}\right)^{p}\\ && \nonumber \qquad \leq \frac{c\left( s+\Gamma+\|Du\|_{L^{\infty}(B_r)}\right)^{\frac{2p-t}{2}}}{(r -\varrho)^{n/2}}\left(\int_{B_r} \big(s+\Gamma+|Du|\big)^{t}\, dy \right)^{\frac{1}{2}}\\&&\qquad \qquad  +c_2\left( s+\Gamma+\|Du\|_{L^{\infty}(B_r)}\right)^{p}\|{\bf P}^{V}(\cdot,R)\|_{L^\infty(B_R)}\,,\label{ssss2}
\end{eqnarray}
and
we recall that $c_2$ only depends on $n,p,\ratio$. At this stage we cannot
use Young's inequality to evaluate the last term in \rif{ssss2}; instead we require the smallness condition
on the potential, i.e. we need to assume that the number $\ep_0$ in \rif{smallii} satisfies
$$
c_2 \ep_0 \leq {\textstyle \frac{1}{4}}\,,
$$
and this allows us to reabsorb the second term appearing on the right hand side on the left.
Proceeding in this way and using Lemma \ref{simpfun} we this time conclude with
\eqn{ap}
$$
s+\Gamma+\|Du\|_{L^{\infty}(B_{R/2})}\leq c \left(\mean{B_R} \big(s+\Gamma+|Du|\big)^{t}\, dy\right)^{\frac{1}{t}} \,.
$$
Again the usual a priori estimate for $Du$ follows taking $t=p$.
\begin{remark} Needless to say the above computations apply to the vectorial cases considered in \rif{uh} and in Theorem \ref{main02}. This simply follows by combining the approach of this section with the estimates derived in Section \ref{thmain02} and settles the a priori estimates needed for Theorem \ref{maind3}.
\end{remark}
\begin{remark} It is clear estimates \rif{aaeegg} and \rif{ap} continue to hold when $t >p$, by simply applying H\"older's inequality to the right hand side integral.
\end{remark}

 \section{Theorems \ref{maind}-\ref{maind3}: approximation and proof}\label{secap}
In the previous sections we have built a priori estimates for a priori regular solutions to \rif{ddd}$_1$; in this section we complete the proof by nesting them in a suitable approximation and convergence scheme. We shall directly give the proof simultaneously for Theorems \ref{maind} and \ref{maind2}, therefore in the following we shall assume that
\eqn{alt2}
$$
\|V\|_{L^\gamma(\Omega')}\leq c_0< \infty \qquad \qquad\gamma = \frac{np}{np-nq-n+p}\leq n\,.
$$
Note that $\gamma < n$ iff $q < p-1$. At a certain stage we shall distinguish the case $q=p-1$ (Theorem \ref{maind}) and the case $q < p-1$ (Theorem \ref{maind2}). More specifically, in the case $q=p-1$ we will need to impose an additional restriction on the constant $c_0$ appearing in \rif{alt2} and this will lead to assume \rif{smallii}; in the case $q<p-1$ no additional restriction will be imposed and therefore this will lead to assume simply $\|V\|_{L^\gamma(\Omega')}< \infty$ as in \rif{smalliianc}.

The proof goes briefly as follows: we build an approximation scheme by solving problems with a truncated and therefore bounded - but not uniformly bounded - right hand sides $\mu_\ep$; this allows to get local regular solutions $u_\ep$. We first prove a uniform bound on the $W^{1,p}$-norm of the the approximating solutions $u_\ep$. Then, using this bound we prove a uniform $L^1$-estimate of the right hand sides $\mu_\ep$, which then up to a subsequence converge to (a vector valued measure) $\mu$ in the sense of measures; at this stage we forget about the specific structure the right hand sides and apply methods form the theory of measure data problems to get a first strong convergence - in $W^{1,1}$ - for the approximating solutions. Then combining this with the local $W^{1,\infty}$ a priori estimates of the previous section we gain the strong convergence of the solutions $u_\ep$ locally in $W^{1,t}$ to $u$, for every $t < \infty$. In turn this allows to identify the limiting right hand side $\mu =b(x,u,Du)V$ and to show that $u$ solves \rif{ddd}, while the local estimates follows passing to the limit those found in Section \ref{apcomp}.

In the rest of the proof we shall denote by $\ep$ a specific sequence of positive numbers $\ep \equiv \ep_k$ with $k \in \en$, such that $\ep_k$; from time to time we shall need to extract a subsequence; this will not be relabeled and we shall keep on denoting it by $\ep$.

{\em Step 1: Approximation scheme.} The approximation scheme now goes as follows: we consider the componentwise truncated potentials $V_{\ep}(\cdot)$  defined as in \rif{truncation2},
and moreover we let
\eqn{truncb}
$$
b_\ep(x,u,z):= \frac{b(x,u,z)}{1+ \ep |b(x,u,z)|}\,.
$$
Note that in anyway we have that
$$
|b_\ep(x,u,z)|\leq 1/\ep \qquad \mbox{and}\qquad
|b_\ep(x,u,z)V_\ep|\leq \sqrt{N}/\ep^{2}\,.
$$
Moreover
\eqn{unic22}
$$
b_\ep \to b \qquad \mbox{uniformly on compact subsets of} \ \Omega \times \er^N \times \ma\,.
$$
Accordingly, we define $u_\ep \in u_0+W^{1,p}_0(\Omega, \er^N)$
as the a solution to the Dirichlet problem
\eqn{appkak}
$$
\left\{
    \begin{array}{ccc}
    -\divo \ a_\ep(Du_\ep)=b_\ep(x,u_\ep,Du_\ep)V_{\ep} &  \mbox{in}& \ \Omega\\
        u_\ep= u_0&\mbox{on} &\ \partial \Omega\,.
\end{array}\right.
$$
The vector field $a_\ep(\cdot)$ has been introduced in Section \ref{smoothing} and is defined according to \rif{reg} in the scalar case $N=1$, and according to \rif{appvec} in the vectorial case $N> 1$, when in fact we assume \rif{uh} is in force for the original vector field $a(\cdot)$.
The existence follows again by standard monotonicity methods - see for instance \cite{Lions} and in particular \cite[Th\'eor\`eme 2]{LL} - and standard regularity theory provides the Lipschitz continuity
of the solution; for this we refer for instance to \cite{DB1, DB2, U}.

{\em Step 2: Uniform coercivity bounds}. Here we establish
uniform bounds - with respect to $\ep$ - for the sequence $\{Du_\ep\}$ in Lebesgue spaces. We distinguish between the cases $p< n$ and $p=n$.

{\em The case $p<n$}.
Let us first show that, under the assumptions of Theorems \ref{maind}-\ref{maind2} that there exists a constant $c$, depending on $n,N,p,\ratio, V, \Gamma,\Omega, \|Du_0\|_{L^p}$ but otherwise independent of $\ep$, such that
\eqn{bbound}
$$
\|Du_\ep\|_{L^p(\Omega)}\leq c \,.
$$
We consider the weak formulation
\eqn{weakap}
$$
\int_{\Omega} \langle a_\ep(Du_\ep), D \varphi\rangle \, dx = \int_{\Omega} b_\ep(x,u_\ep,Du_\ep)V_{\ep} \varphi \, dx
$$
and test it with $\varphi = u_\ep-u_0$;
using the growth conditions and performing elementary manipulations we are lead to
\eqn{match}
$$
 \int_{\Omega} |Du_\ep|^p\, dx  \leq  c\int_{\Omega}(s_\ep+|Du_0|)^p\, dx + c\int_{\Omega} |b(x,u_\ep,Du_\ep)||u_0-u_\ep||V|\, dx \,,
$$
where the constant $c$ depends only on $n,N,p,\ratio$. As usual we denote by
$
p^*=
np/(n-p)$ for $p< n
$
the usual Sobolev conjugate exponent. Notice that with such a notation we always have that $\gamma >(p^*)'$ as long as $q>0$ - that is the case under the present assumptions -
and moreover
$$
\left(\frac{\gamma}{(p^*)'}\right)':=
\frac{\gamma (np-n+p)}{\gamma (np-n+p) - np}= \frac{np-n+p}{nq}\,.
$$
We finally notice that
\eqn{servedopo}
$$
q(p^*)'\left(\frac{\gamma}{(p^*)'}\right)'= p \,.
$$
We now estimate the right hand side of \rif{match} via H\"older's and Young's inequalities, where $\delta \in (0,1)$ has to be chosen later:
\begin{eqnarray*}
&& \int_{\Omega} |b(x,u_\ep,Du_\ep)||u_0-u_\ep||V|\, dx \leq c\int_{\Omega} (\Gamma+|Du_\ep|)^{q}|u_0-u_\ep||V|\, dx\\
&& \qquad \leq c \left(\int_{\Omega} \big[(\Gamma+|Du_\ep|)^{q}|V|\big]^{(p^*)'}\, dx \right)^{\frac{1}{(p^*)'}}
\left(\int_{\Omega} |u_0-u_\ep|^{p^*}\, dx \right)^{\frac{1}{p^*}}\\
&&\qquad \leq \delta \int_{\Omega} \left(|Du_0|^p+|Du_\ep|^{p}\right)\, dx \\ && \hspace{2cm}+c (\delta) \left(\int_{\Omega} \big[(\Gamma+|Du_\ep|)^{q}|V|\big]^{(p^*)'}\, dx
\right)^{\frac{p}{(p^*)'(p-1)}}\,.
\end{eqnarray*}
In turn we notice - again by H\"older's inequality - that
\begin{eqnarray}
&& \left(\int_{\Omega} \big[(\Gamma+|Du_\ep|)^{q}|V|\big]^{(p^*)'}\, dx \right)^{\frac{p}{(p^*)'(p-1)}}\nonumber \\
&&  \leq c \left(\int_{\Omega} |V|^\gamma\, dx \right)^{\frac{p}{\gamma(p-1)}}\left(\int_{\Omega}\big(\Gamma+|Du_\ep|\big)^{q(p^*)'\left(\frac{\gamma}{(p^*)'}\right)'}\, dx\right)^{\frac{p}{(p^*)'(p-1)}\left(1/\left(\frac{\gamma}{(p^*)'}\right)'\right)} \nonumber
\\
&& \  \leq  c\|V\|_{L^\gamma(\Omega)}^{\frac{p}{p-1}}\left(\int_{\Omega}(\Gamma+|Du_\ep|)^{p}\, dx\right)^{\frac{q}{p-1}}\,.\label{holdserve}
\end{eqnarray}
The constant $c$ appearing in the last estimate only depends on $n,p,\ratio, q,\Omega$, while in the last line we used \rif{servedopo} and
$$
\frac{p}{(p^*)'(p-1)}\frac{1}{( \gamma/ (p^*)')'}
=\frac{q}{p-1}\,,
$$
that in turn follows from \rif{servedopo}. Joining the last three estimates above all in all give
\begin{align}
 \nonumber \int_{\Omega}&|b(x,u_\ep,Du_\ep)||u-u_\ep||V|\, dx \\
  & \leq  \delta \int_{\Omega} \big( |Du_0|^p+|Du_\ep|^{p}\big)\, dx   + c\|V\|_{L^\gamma(\Omega)}^{\frac{p}{p-1}}\left(\int_{\Omega}(\Gamma+|Du_\ep|)^{p}\, dx\right)^{\frac{q}{p-1}}\label{arr}
\end{align} where $c$  depends on $n,p,\ratio, q,\Omega$ and $\delta$.
Combining  this with \rif{match} we arrive at
\begin{align}
 \nonumber \int_{\Omega}|Du_\ep|^{p}\, dx
  \leq & c_3\delta \int_{\Omega} |Du_\ep|^{p}\, dx
  +c \int_{\Omega}\big(s_\ep+ |Du_0|\big)^{p}\, dx\\
  &+ c_4\|V\|_{L^\gamma(\Omega)}^{\frac{p}{p-1}}\left(\int_{\Omega}(\Gamma+|Du_\ep|)^{p}\, dx\right)^{\frac{q}{p-1}},\label{adjust1}
\end{align}
where $c_3$ and $c_4$ depend both on $n,p,\ratio, q,\Omega$, while $c_4$ depends additionally also on $\delta$.
We further distinguish two cases. The first is the one occurring in Theorem \ref{maind2}, that is when $q<p-1$; in this case
we use Young's inequality with $\tilde\delta \in (0,1)$ to separate the third term appearing in the right-hand side of the preceding
inequality
to deduce that
$$
    \int_{\Omega} |Du_\ep|^p\, dx \leq (c\delta+\tilde\delta) \int_{\Omega} |Du_\ep|^p\, dx + c\int_{\Omega} (s_\ep+\Gamma +|Du_0|)^p\, dx
    + c\|V\|_{L^n(\Omega)}^{\frac{p}{p-q+1}}  \,,
$$
where the constant $c$ depends on $n,p,\ratio, q,\Omega,\delta,\tilde{\delta}$.
Choosing $\delta,\tilde\delta$ small enough to obtain $c\delta +\tilde\delta\leq 1/2$ we conclude with \rif{bbound}.
When $q=p-1$ we are in the situation of Theorem \ref{maind}
and we go back to
\rif{adjust1}. Note that $\gamma =n$ in this case. Now we first choose choose $\delta$ small enough in order to reabsorb  the term
$c_3\delta\int_\Omega |Du_\ep|^pdx$ in the left hand side. This amounts in choosing $c_3\delta\le 1/2$.  This fixes the constant $c_4$
in dependence of $\delta$ and we obtain from \rif{adjust1} immediately that
$$
    \int_{\Omega}|Du_\ep|^{p}\, dx
    \leq
    2c \int_{\Omega}\big(s_\ep+ |Du_0|\big)^{p}\, dx
  + 2c_4\|V\|_{L^n(\Omega)}^{\frac{p}{p-1}}\int_{\Omega}(\Gamma+|Du_\ep|)^{p}\, dx\, .
$$
Next we choose
$c_0$ from \rif{smallii} small enough in order to have
\eqn{smally2}
$$
2^pc_4\|V\|_{L^n(\Omega)}^{\frac{p}{p-1}}\leq 2^pc_4c_0^{\frac{p}{p-1}}\leq {\textstyle \frac{1}{2}}
$$
and again we conclude with \rif{bbound}. Note that the size of the constant $c_0$ appearing in \rif{smallii} is exactly determined in \rif{smally2} and this justifies the content of Remark \ref{compute}.

{\em Case $p=n$}. In this case we show that there is an exponent $t$ - which will be chosen properly close to $n$ - satisfying
 \eqn{ntp}
 $$
 n < t < p_0
 $$
and depending only $n,p,\ratio,p_0,q, \Omega$, but otherwise independent of $\ep$, such that the following analogue of \rif{bbound}:
\eqn{bboundnn}
$$
\|Du_\ep\|_{L^t(\Omega)}\leq c \,,
$$
holds. The constant $c$ in the last inequality will depend only on the fixed data $n,N,p,\ratio, V,\Gamma, \Omega,\|Du_0\|_{L^{p_0}}$ and $t-n$, being otherwise independent of $\ep$; we recall that the exponent $p_0>n$ has been defined in \rif{datum}. Applying Theorem \ref{gehringt} with the choices $\tilde{a}\equiv a_\ep$, $v \equiv u_\ep$, $v_0 \equiv u_0$, $g \equiv b_\ep(x,u_\ep,Du_\ep)V_\ep$, we conclude with
\eqn{maggiore}
$$
Du_\ep \in L^{p_1}(\Omega, \ma)\,,\qquad \qquad p_1 \equiv p_1\left(n,N,\ratio,[\partial \Omega]_{C^{0,1}}\right)\in(n,p_0)\,.
$$
Now we select $t>n$, which is eventually going to be chosen very close to $n$, and appeal to Theorem \ref{ISs}, that we apply to $w = u_\ep-u$ with the choice $\delta = t-n$; the exact value of the number $t$ will be chosen throughout the proof in such a way to determine the dependence on the various constant stated after \rif{ntp}. We initially take $t$ close enough to $n$ to have
\eqn{initial}
$$
n< t\leq p_1\,.
$$
By Theorem \ref{ISs} we find
\eqn{eccole}
$$\varphi \in W^{1,\frac{t}{t-n+1}}_0(\Omega)\qquad \mbox{and}\qquad H\in L^{\frac{t}{t-n+1}}(\Omega, \er^n)$$ such that
$$
|D(u_\ep-u_0)|^{t-n}D(u_\ep-u_0) = D\varphi + H\,,
$$
and moreover the inequalities
\eqn{Iw1}
$$
\|H\|_{L^{\frac{t}{t-n+1}}(\Omega)}\leq c(n,\Omega)(t-n)\|D(u_\ep-u_0)\|_{L^t(\Omega)}^{t-n+1}$$
and
\eqn{Iw2}
$$\|D\varphi\|_{L^{\frac{t}{t-n+1}}(\Omega)}^{\frac{t}{t-n+1}}\leq c(n,\Omega)\|D(u_\ep-u_0)\|_{L^t(\Omega)}^t
$$
hold. We test \rif{weakap} written in the form
$$
\int_{\Omega} \langle a_\ep(Du_\ep)- a_\ep (Du_0), D \varphi\rangle \, dx = \int_{\Omega} b(x,u_\ep,Du_\ep)V_{\ep} \varphi \, dx -\int_{\Omega} \langle  a_\ep (Du_0), D \varphi\rangle \, dx
$$
with $\varphi$ defined in \rif{eccole}, observing that this choice is admissible by \rif{maggiore} and \rif{initial}; keep also in mind \rif{datum}. We proceed estimating the resulting terms. We use monotonicity inequalities in \rif{mon3}-\rif{mony} - which hold by \rif{aspep} - to estimate
\begin{align}
 \nonumber \int_{\Omega}& |Du_\ep|^t\, dx
   \leq  c\int_{\Omega}(s_\ep+|Du_0|)^t\, dx + c\int_{\Omega} |b(x,u_\ep,Du_\ep)||\varphi||V|\, dx\\
  &  +
  c\int_{\Omega} (s_\ep+|Du_\ep  |)^{n-1}|H|\, dx +c\int_\Omega (s_\ep+|Du_0  |)^{n-1}|D(u_\ep-u_0)|^{t-n+1}\, dx\,.\label{mmatch2}
\end{align}
Observe that to derive the previous estimate we applied Young's inequality with exponents $t/(t-n+1)$ and $t/(n-1)$.
Let us first estimate the third term in the right hand side of \rif{mmatch2}. By H\"older's inequality and \rif{Iw1} we have
\begin{align}
\nonumber \int_{\Omega} (s_\ep+|Du_\ep|)^{n-1}|H|\, dx &\leq \left(\int_{\Omega} (s_\ep+|Du_\ep|)^{t}\, dx \right)^{\frac{n-1}{t}}
\left(\int_{\Omega} |H|^{\frac{t}{t-n+1}}\, dx \right)^{\frac{t-n+1}{t}}\\
&\leq  c(n,\Omega)(t-n)\int_{\Omega} \big(s_\ep+|Du_\ep|+|Du_0|\big)^{t}\, dx\,.\label{H}
\end{align}
By more trivial means, and in particular by Young's inequality with $\delta \in (0,1)$, we have
\begin{align}
\nonumber
    \int_{\Omega} (s_\ep+|Du_0|)^{n-1}|D(u_\ep&-u_0)|^{t-n+1}\, dx \\
    &\leq \delta  \int_{\Omega} |Du_\ep|^{t}\, dx + c(\delta)\int_{\Omega} (s_\ep+|Du_0|)^{t}\, dx \,.\label{H2}
\end{align}
We proceed by estimating the last integral appearing in \rif{mmatch2}; to this aim, also accordingly to \rif{alt2}, we introduce
\eqn{ecco1}
$$
\gamma:= \frac{n}{n-q}\,,
$$
and then we further reduce $t$ in order to have
\eqn{sceltat}
$$
n < t < \frac{n(n-1)}{n-q-1}\,.
$$
Notice that the last choice is possible since we are assuming $q > 0$. Denoting by $r$ denotes conjugate exponent to $t/(n-1)$, i.e.
\eqn{eccor}
$$
    r:= \frac{t}{t-n+1}<n\,,
$$
 - the last inequality being a consequence of the fact that $t>n$ - we note that \rif{sceltat} implies
\eqn{gammar}
$$
\gamma > (r^*)'\,.
$$
Here, as usual we have that $r^*=nr/(n-r)$, a definition which makes sense by \rif{eccor}. Now we have, by mean of H\"older's, Sobolev's and
Young's inequality with $\delta \in (0,1)$, and via \rif{Iw2}
\begin{align}
 \nonumber\int_{\Omega} &|b(x,u_\ep,Du_\ep)||\varphi||V|\, dx \leq c\int_{\Omega} \big(\Gamma+|Du_\ep|\big)^{q}|\varphi||V|\, dx\\
&\nonumber \le c \left(\int_{\Omega} \big[(\Gamma+|Du_\ep|)^{q}|V|\big]^{(r^*)'}\, dx \right)^{\frac{1}{(r^*)'}}
\left(\int_{\Omega} |\varphi|^{r^*}\, dx \right)^{\frac{1}{r^*}}\\
&\nonumber \le c\left(n, \Omega, (t-n)\right) \left(\int_{\Omega} \big[(\Gamma+|Du_\ep|)^{q}|V|\big]^{(r^*)'}\, dx \right)^{\frac{1}{(r^*)'}}
\left(\int_{\Omega} |D\varphi|^{r}\, dx \right)^{\frac{1}{r}}\\
&\nonumber\leq \delta \int_{\Omega} |D\varphi|^{r}\, dx +c(\delta,\Omega,(t-n)) \left(\int_{\Omega} \left[(\Gamma+|Du_\ep|)^{q}|V|\right]^{(r^*)'}\, dx \right)^{\frac{r}{(r^*)'(r-1)}}\\
&\leq \delta \int_{\Omega} (|Du_0|^{t}+|Du_\ep|^t)\, dx +c \left(\int_{\Omega} \left[(\Gamma+|Du_\ep|)^{q}|V|\right]^{(r^*)'}\, dx \right)^{\frac{r}{(r^*)'(r-1)}},\label{big}
\end{align}
for a constant $c=c(\delta,\Omega,(t-n))$; we observe that such a constant blows-up when $t \downarrow n$, being related to the constant occurring in
the Sobolev embedding inequality for the exponent $r$ defined in \rif{eccor}, and in fact $r \uparrow n $ when $t \downarrow n$. In turn, by \rif{gammar} we may further use H\"older's inequality as follows:
\begin{eqnarray}
&& \label{ecco0} \quad \left(\int_{\Omega} \left[(\Gamma+|Du_\ep|)^{q}|V|\right]^{(r^*)'}\, dx \right)^{\frac{r}{(r^*)'(r-1)}}\\
&&  \leq c \left(\int_{\Omega} |V|^\gamma\, dx \right)^{\frac{r}{\gamma(r-1)}}\left(\int_{\Omega}(\Gamma+|Du_\ep|)^{q(r^*)'\left(\frac{\gamma}{(r^*)'}\right)'}\, dx\right)^{\frac{r}{(r^*)'(r-1)}\left(1/\left(\frac{\gamma}{(r^*)'}\right)'\right)}\,.\nonumber
\end{eqnarray}
Keeping \rif{ecco1} in mind we record the identities
\eqn{ecco2}
$$
\left(\frac{\gamma}{(r^*)'}\right)'= \frac{\gamma(nr-n+r)}{\gamma(nr-n+r)-nr}= \frac{nr-n+r}{rq-n+r}=\frac{n(n-1)+t}{qt+(n-t)(n-1)}\,,
$$
\eqn{ecco3}
$$
q(r^*)'\left(\frac{\gamma}{(r^*)'}\right)'= \frac{nqr}{rq-n+r}=\frac{nqt}{qt+(n-t)(n-1)}\,,
$$and
\eqn{ecco4}
$$\frac{r}{(r^*)'(r-1)}\left(1/\left(\frac{\gamma}{(r^*)'}\right)'\right)=\frac{qt+(n-t)(n-1)}{n(n-1)}\,.$$
We start observing that
\eqn{ecco5}
$$
\frac{nqt}{qt+(n-t)(n-1)}\leq t \Longleftrightarrow q\leq n-1
$$
and
\eqn{ecco6}
$$
\frac{qt+(n-t)(n-1)}{n(n-1)}\leq 1 \Longleftrightarrow q \leq n-1
$$
hold, with equality - {\em in both \trif{ecco5} and \trif{ecco6} - occurring iff $q=n-1$}. Moreover, we observe that, thanks to \rif{ecco3} and \rif{ecco5}, inequality \rif{ecco0} gives
\begin{align}
     \nonumber \bigg(\int_{\Omega} \big[(\Gamma&+|Du_\ep|)^{q}|V|\big]^{(r^*)'}\, dx \bigg)^{\frac{r}{(r^*)'(r-1)}}\\
& \leq c\|V\|_{L^\gamma(\Omega)}^{\frac{r}{r-1}}\left(\int_{\Omega}\big(1+\Gamma+|Du_\ep|\big)^{t}\, dx\right)^{\frac{r}{(r^*)'(r-1)}\left(1/\left(\frac{\gamma}{(r^*)'}\right)'\right)}\,,\label{ecco02}
\end{align}
We keep on distinguishing two cases; the first is when $q < n-1$; thanks to \rif{ecco4} and \rif{ecco6} - which holds with the strict inequality when $q < n-1$ - in \rif{ecco02} we may apply Young's inequality with $\xi \in (0,1)$ thereby getting
\begin{align}
\nonumber\bigg(\int_{\Omega} \big[(\Gamma+|Du_\ep|\big)^{q}&|V|\big]^{(r^*)'}\, dx \bigg)^{\frac{r}{(r^*)'(r-1)}}\\
&\leq \xi
\int_{\Omega}\big(1+\Gamma+|Du_\ep|\big)^{t}\, dx + c(\xi) \|V\|_{L^\gamma(\Omega)}^{\frac{n}{n-1-q}}\,.\label{slo1}
\end{align}In the remaining borderline case $q=n-1$ we notice that equality holds in \rif{ecco5}-\rif{ecco6} and therefore, keeping  also \rif{ecco3}-\rif{ecco4} in mind, estimate \rif{ecco0} reduces to
\eqn{slo2}
$$\left(\int_{\Omega} \big[(\Gamma+|Du_\ep|)^{q}|V|\big]^{(r^*)'}\, dx \right)^{\frac{r}{(r^*)'(r-1)}}
\leq c\|V\|_{L^n(\Omega)}^{\frac{r}{r-1}}\int_{\Omega}\big(\Gamma+|Du_\ep|\big)^{t}\, dx\,. $$
We now again distinguish between the cases $q < n-1$ and $q=n-1$.
In the first case, matching \rif{slo1} with \rif{big}, and using the resulting inequality together with \rif{H}-\rif{H2} in \rif{mmatch2} finally yields
\begin{align*}
\int_{\Omega} |Du_\ep|^t\, dx  \leq & c_5\Big[ c(\delta,\Omega, (t-n))\xi + \delta+ (t-n)\Big]\int_{\Omega} |Du_\ep|^t\, dx\\
&+ c_6\int_{\Omega} \big(
s_\ep+\Gamma+1+|Du_0|\big)^t\, dx+ c_7 \|V\|_{L^\gamma(\Omega)}^{\frac{n}{n-1-q}} \,.
\end{align*}
where the constant $c_6$ depends only on $n,p,\ratio, \delta, \Omega$ and $c_5$ only on $n,p,\ratio, \Omega$. Note that the constant $c(\delta,\Omega, (t-n))$ inside the square brackets blows-up when $t \searrow n$ and the same happens to $c_7$, which also blows-up for $\xi \to 0$. Now we first select $\delta$ small enough and $t$ close enough to $n$ to have
\eqn{saty}
$$
c_5\delta + c_5(t-n)\leq {\textstyle \frac{1}{4}}
$$
and then we take $\xi$ small enough in order to have
$$
c_5 \xi c(\delta,\Omega, (t-n)) \leq {\textstyle\frac{1}{4}}\,.
$$
In view of the dependence of the constant $c_4$ on the parameters specified above this finally determines $\delta, \xi$ and especially $r$ as a function of $n,p,\ratio$. Recalling that $t < p_0$ , we conclude with
\eqn{sss}
$$
\int_{\Omega} |Du_\ep|^t\, dx   \leq  {\textstyle \frac{1}{2}}\int_{\Omega} |Du_\ep|^t\, dx+ c\int_{\Omega} (s_\ep+\Gamma+1+|Du_0|)^{p_0}\, dx+ c \|V\|_{L^\gamma(\Omega)}^{\frac{n}{n-1-q}} \,,
$$
and \rif{bboundnn} follows in the case $q< n-1$. For the case $q=n-1$ we use \rif{slo2} with \rif{big}, we match the resulting inequality with \rif{H}-\rif{H2} in \rif{mmatch2}, we get
\begin{align*}
\int_{\Omega} |Du_\ep|^t\, dx  \leq & c_5\left[c(\delta,\Omega, (t-n))\|V\|_{L^n(\Omega)}^{\frac{n}{n-1}} + \delta+ (t-n)\right]\int_{\Omega} |Du_\ep|^t\, dx\\
&\qquad  + c\int_{\Omega} \big(s_\ep+\Gamma+1+|Du_0|\big)^t\, dx \,,
\end{align*}
where again $c_5$ depends only on $n,p,\ratio, \Omega$. We first choose $\delta$  and $t$ in order to satisfy \rif{saty}; to conclude we use the assumption \rif{smallii} - and this is the moment the size of the constant $c_0$ is determined - in order to estimate
$$
c_5c(\delta,\Omega, (t-n))\|V\|_{L^n(\Omega)}^{\frac{n}{n-1}}\leq c_5c(\delta,\Omega, (t-n))c_0^{\frac{n}{n-1}}\leq {\textstyle \frac{1}{4}}\,.
$$
Such choices in \rif{sss} imply
$$\int_{\Omega} |Du_\ep|^t\, dx   \leq  {\textstyle \frac{1}{2}}\int_{\Omega} |Du_\ep|^t\, dx+ c\int_{\Omega} (s_\ep+\Gamma+1+|Du_0|)^{p_0}\, dx\,,
$$
and \rif{bboundnn} follows in the case $q= n-1$ too.

{\em Step 3: Convergence and conclusion of the proof}. We here adapt a few compactness arguments which have been developed in the context of measure data problems, and this will allow for a rapid conclusion. We shall follow the strategy introduced in \cite[Section 4]{DHM}, showing in some detail the modifications needed for its adaptation to our context. A difference with \cite{DHM} is that we are dealing with a sequence of vector fields $a_\ep$, instead that with a fixed one - that is $a_\ep(z)=g_\ep(|z|^2)z$ - as in \cite{DHM}. We also observe that the restriction $2-1/n< p$ appearing in \cite{DHM} does not affect the approximation argument in that paper, only coming from the a priori estimates thereby developed for measure data problems. Here we use the convergence arguments in the full range $p>1$. Let us first prove the convergence argument in the more delicate vectorial case $N>1$, when the additional structure assumption \rif{uh} is in force, therefore completing the proof of Theorem \ref{maind3}. At the end we shall add the remarks necessary to treat the general scalar case $N=1$, when on the other hand \rif{uh} is not assumed.

We now switch to the convergence proof. Let us first prove the convergence argument in the more delicate vectorial case $N>1$, when the additional structure assumption \rif{uh} is in force, and therefore completing the proof of Theorem \ref{maind3}. At the end we shall add the remarks necessary to treat the general scalar case $N=1$, when on the other hand \rif{uh} is not assumed. Summarizing \rif{bbound} and \rif{bboundnn} we can assert that, whenever $q \leq p-1$ and $p \leq n$, the following bound holds:
\eqn{bboundnn2}
$$
\|Du_\ep\|_{L^p(\Omega)}\leq c \,.
$$ The constant $c$ - in general depending on $n,p,\ratio$ - will also depend on $t$ in the case $p=n$, being increasing when $t \to n$. 
By \rif{bboundnn2}, and choosing $t$ suitably close to we have that the measures defined by
\eqn{measuree}
$$
\mu_\ep :=|Du_{\ep}|^{q}V_\ep\, dx
$$
have uniformly bounded masses. This is essentially a consequence of proved in the previous steps; indeed when $p< n$, exactly as in \rif{holdserve} we estimate
\begin{eqnarray}
    \nonumber |\mu_\ep|(\Omega)
    &\leq&  \int_\Omega (\Gamma +|Du_\ep|)^q|V|\, dx\leq c \int_{\Omega}(\Gamma + |Du_\ep|)^\frac{nq}{n-1} + |V|^n\, dx \\ \nonumber
    &\le & c \int_{\Omega}(1+\Gamma + |Du_\ep|)^\frac{n(p-1)}{n-1} + |V|^n\, dx\\ \label{follows} &\le& c\int_{\Omega}(1+\Gamma + |Du_\ep|)^p + |V|^n\, dx\,,
\end{eqnarray}
and the uniform bound on $|\mu_\ep|(\Omega)$ follows from \rif{bboundnn2}; the last inequality in \rif{follows} obviously follows from $p\leq n$. Using again \rif{bboundnn2}, and up to passing to a non-relabeled subsequence, we may assume that
\eqn{varconv}
$$
\left\{
\begin{array}{l}
    u_\ep\rightharpoonup u\quad \mbox{weakly in $W^{1,t}(\Omega ,\er^N)$}\\[5pt]
    u_\ep\to u\quad \mbox{strongly in $L^{p-1}(\Omega ,\er^N)$, and a.e. on $\Omega$,}\\[5pt]
    a_\ep(Du_\ep)\rightharpoonup \overline a\quad\mbox{weakly in $L^\frac{t}{p-1}(\Omega,\er^{Nn} )$,}\\[5 pt]
    | Du_\ep-Du|\rightharpoonup h\quad \mbox{weakly in $L^{t}(\Omega)$},\\[5 pt]
    \mu_\ep\rightharpoonup \mu \quad\mbox{as Radon measures.}
\end{array}\right.
$$
Now we enter the proof given in \cite{DHM}, keeping the notation adopted there as much as possible; a first difference with the notation in \cite{DHM}, is that we use the subscript $\ep$ in place of $k$. Also we shall give the proof in the vectorial case $N > 1$ under the additional assumption \rif{uh}; later on we shall show that such an assumption is not necessary in the scalar case $N=1$.

As in \cite{DHM} we first start considering the case $p\geq 2$. The terms $T_\ep$ in \cite{DHM} (actually denoted by $T_k$ in \cite{DHM}) and defined by
\eqn{monost0}
$$
T_\ep := c\int_{\Omega} |Du_\ep-Du|^p \eta (u_\ep-v)\phi\, dx\,,
$$
can be now estimated as
\eqn{monost}
$$T_\ep \leq I_\ep+II_\ep+III_\ep$$
where
\begin{align*}
    I_\ep&:= \int_\Omega \mu_\ep\psi(u_\ep-v)\phi\ dx -\int_\Omega \langle a_\ep(Du_\ep),\psi (u_\ep-v)\otimes D\phi\rangle\, dx\, ,\\
    II_\ep&:= -\int_\Omega \langle a_\ep(Du_\ep),D(\psi (u_\ep-v))\rangle (1-\eta (u_\ep-v))\phi \, dx\, ,\\
    III_\ep&:= -\int_\Omega \langle a_\ep(Dv),D(u_\ep-v)\rangle \eta (u_\ep-v)\phi\, dx\,.
\end{align*}
Here, exactly as in \cite{DHM}, the map $\psi$ is defined as a \ap vectorial truncation", that is
\eqn{truncv}
$$
\psi(z):= \alpha (|\xi|)\frac{\xi}{|\xi|}\qquad \xi \in \er^N
$$
where $\alpha\colon (0,\infty)\to (0,\infty)$ is a bounded differentiable function such that both $\alpha$ and $\alpha'$ are bounded; moreover, as in \cite{DHM}, $v$ is a Lipschitz map to be chosen. Observe that estimate \rif{monost} follows since the vector fields $a_\ep(\cdot)$ satisfy the monotonicity estimate \rif{mon2} uniformly in $\ep$; this is in turn a consequence of \rif{aspep}. Now, we may proceed as in \cite{DHM} in letting $\ep \to 0$. Due to \rif{appvec} we notice that (see again \cite{DHM} for the definition of $\psi$) whenever $z \in \ma$ and $\xi \in \er^N$ it holds that
\begin{eqnarray}\label{vect}
   \nonumber && \langle a_\ep(z), D\psi(\xi)z\rangle =g_\ep(|z|^2)\langle z, D\psi (\xi)z\rangle \\ & &\qquad =g_\ep(|z|^2)\alpha'(|z|)\left \langle z,\frac{\xi}{|\xi|}\right\rangle^2 + g_\ep(|z|^2)\frac{\alpha (|\xi|)}{|\xi|} \left[ |z| ^2 -
    \left \langle z,\frac{\xi}{|\xi|}\right\rangle^2\right]\geq 0
\end{eqnarray}
so that - applying the previous relation with $z \equiv Du_\ep$ and $\xi \equiv u_\ep$ - $II_\ep$ can be estimated as follows:
\eqn{onlyp}
$$
    II_\ep\le \int_\Omega \langle a_\ep(Du_\ep),D\psi (u_\ep-v)Dv\rangle (1-\eta (u_\ep-v))\phi \, dx\, .
$$
By \rif{varconv} and the fact that $\eta ,\psi$ and $D\psi$ are bounded we can pass in this term as in
\cite[Section 4, p. 361]{DHM} to the limit $\ep\to 0$ obtaining
$$
    \limsup_{\ep\to 0} II_\ep\le \int_\Omega \langle \overline a,(D\psi) (u-v)Dv\rangle (1-\eta (u-v))\phi \, dx\, .
$$
As for the term $III_\ep$ we recall that the map $v$ is always considered to be Lipschitz continuous, and therefore, making also use of \rif{unic} yelds
$$
 \limsup_{\ep\to 0} III_\ep =-\int_\Omega \langle a(Dv),D(u-v)\rangle \eta (u-v)\phi\, dx\,.
$$
The passage to the limit $\ep \to 0$ for $I_\ep$ can be done easily exactly as in \cite{DHM}. The rest of the proof can be done exactly as after \cite[(11)]{DHM}. The final conclusion in our setting is that $h$ defined in \rif{varconv} equals zero and therefore
\eqn{convt}
$$
Du_\ep \to Du \qquad \mbox{strongly in }\ \ L^1(\Omega, \ma)\,.
$$
The argument for the case $1 < p <2 $ is quite similar; according to the arguments in \cite{DHM} the term in \rif{monost0} must be replaced by
$$
T_\ep := c\int_{\Omega} (|Du_\ep|+|Dv|)^{p-2}|Du_\ep-Du|^2 \eta (u_\ep-v)\phi\, dx\,.
$$
On the other hand we observe that from the a priori estimates \rif{aaeegg}-\rif{ap} and the a priori bound in \rif{bboundnn2}, we easily gain that for every open subset $\Omega' \Subset \Omega$ there exists a constant $c$ depending only on $n,N,p,\ratio$ and dist$(\Omega', \partial \Omega)$, but otherwise independent of $\ep>0$, such that
$
\|Du_\ep\|_{L^\infty(\Omega')}< c
$
holds. In turn this estimate together with \rif{convt} and a standard diagonal argument implies
\eqn{convt0}
$$
Du_\ep \to Du \qquad \mbox{strongly in }\ \ L^t_{\loc}(\Omega, \ma)\ \ \mbox{for every} \ t < \infty\,.
$$
The last result together with a standard consequence of Lebesgue's dominated convergence theorem and with \rif{unic} and \rif{unic22}, allows finally to let $\ep \to 0$ in \rif{weakap} - recall that there $\varphi$ has compact support in $\Omega$ - thereby getting
$$
\int_{\Omega} \langle a(Du), D \varphi\rangle \, dx = \int_{\Omega} b(x,u,Du)V \varphi \, dx\,,
$$
that is the limiting map $u$ weakly solves \rif{ddd}.
The proof of the a priori estimates \rif{aes1} and \rif{aes2} follows letting $\ep \to 0$ in \rif{ap} and \rif{aaeegg} respectively - there we take $t=p$ - making also use of \rif{convt0}, as already done for \rif{semi}. The proof in the vectorial case $N>1$ is therefore complete; this means we have Theorem \ref{maind3}.

As for the scalar case $N=1$, that is when \rif{uh} is not in force, we notice that the only point in the convergence argument for vectorial case above where \rif{uh} was used was the estimate in \rif{onlyp}, which in turn is a consequence of \rif{vect}. At this stage we add a couple of remarks. The first is that we can always assume that $a_\ep(0)=0$, this by replacing $a_\ep(z)$ by $a_\ep(z)-a_\ep(0)$ which changes nothing in the problem i.e. $\divo\, (a_\ep(Du_\ep)- a_\ep(0))= \divo \, a_\ep(Du_\ep)$. The advantage is that now we may assume that
\eqn{veramon}
$$\langle a_\ep(z), z\rangle\geq \nu_0|z|^p \,.$$
The second observation is that since now the solutions are scalar valued we can replace the function in \rif{truncv} by
$
\psi(t):= \min \{t,1\}$ for $ t \geq 0$. In this way in \rif{vect} we still have $\langle a_\ep(z),\psi'(\xi)z\rangle =\psi'(\xi)\langle a_\ep(z), z\rangle\geq 0$, this being a consequence of \rif{veramon}. At this point \rif{onlyp} follows and the rest of the proof for Theorems \ref{maind} and \ref{maind2} remains unchanged as for the scalar case.
\section{Possible extensions}
In this section we want to briefly outline a few possible extensions of the above results to operators with more general growth conditions, for instance considered in the paper of Lieberman \cite{Li}. We shall confine ourselves to equations and systems of the type \rif{ppp}, with assumptions \rif{asp} replaced by
\eqn{aspge}
$$
\left\{
    \begin{array}{c}
    |a(z)|\leq Lh(|z|)\,,\quad |\partial a(z)| \leq Lh'(|z|) \\ [3 pt]
    \nu^{-1}h'(|z|)|\lambda|^{2} \leq \langle \partial a(z)\lambda, \lambda
    \rangle
    \end{array}
    \right.
$$
where $h\colon \to [0,\infty)$ is a non-decreasing $C^1((0,\infty))$ such that
\eqn{ellh}
$$
0< \delta_0 \leq \frac{h'(t)t}{h(t)}\leq \Lambda\,.
$$
Note that in the case $h(t)=t^{p-1}$ we recover the standard $p$-Laplacean operator with $\delta_0 =p-1>0$ as long as $p>1$.

The point is now that the boundedness criteria of Theorems \rif{main01}-\rif{main02} hold true and in particular estimate (\ref{apl}) holds in the form
\begin{align}\label{apestimate}
    \||Du|h(|Du|)&\|_{L^\infty(B_{R/2})}\nonumber\\
    &\leq  c \left(\mean{B_R} |Du|h(|Du|)\, dx \right)^{\frac{1}{2}}
    + c\|{\bf P}^V(\cdot, R)\|_{L^\infty(B_{R})}^{\frac{1}{2}}+ c\,.
\end{align}
The proof rest as usual on the approximation argument - that can be realized combining the arguments here with those in \cite{Li} - and with the a priori estimates. We shall here sketch the proof of this last one, which in turn relies on the fact that the Caccioppoli's inequality \rif{cacci} holds with
\eqn{cacciv}
$$
v:= h(|Du|)|Du|\,.
$$
To this aim, going back to the proof of \rif{cacci}, let us observe this fact in the context here
leads to the following analog of \rif{bbb}:
\begin{eqnarray}
 \nonumber && I_1 + I_2 + I_3 :=  \int_{\Omega} \eta^2\tilde{a}_{i,j}H(Du)D_jD_su D_iD_su (v-k)_+ \, dx \\
  \nonumber && \qquad   + \int_{\Omega} \eta^2\tilde{a}_{i,j}H(Du) D_jD_su D_su D_i(v-k)_+ \, dx\\
&& \qquad   + 2\int_{\Omega} \eta\tilde{a}_{i,j}H(Du) D_jD_su  D_su (v-k)_+D_i\eta \, dx = II_1+ II_2+ II_3\,.\label{bbbh}
\end{eqnarray}
This time we have defined
\eqn{HH}
$$
H(|Du|):= h'(|Du(x)|)+h(|Du(x)|)/|Du(x)|
$$ and, obviously
$$
\tilde{a}_{i,j}(x):= \frac{\partial_{z_j}a_i(Du(x))}{H(Du(x))}\,.
$$
Observe that this last matrix is uniformly elliptic - with eigenvalues uniformly bounded from above and below by a constant depending on $\ratio, \delta_0, \Lambda$ - by \rif{aspge} and \rif{ellh}. By observing that this time it is $
D_j v =H(Du) D_jD_suD_su
$ and replacing in the estimates \rif{bbb}-\rif{pdopo} the function $H(Du)^{p-2}$ used there by $H(Du)$ introduced in \rif{HH}, we can proceed estimating as done there as far as the left hand side terms are concerned. As the the right hand side ones, this time we estimate
$$
(v-k)_+ = \sqrt{(v-k)_+}\sqrt{\frac{h(|Du|)}{|Du|}}|Du|\leq \frac{1}{\sqrt{\delta_0}}\|Du\|_{L^{\infty}}\sqrt{(v-k)_+}\sqrt{H(Du)}\,,
$$
so that \rif{IIuno} can be replaced by
$$
 |II_1| \leq \delta I_5+ c\|Du\|_{L^{\infty}}^2\int_{\Omega}\eta^2 |V |^2\, dx\,.
$$
The estimates for the terms $II_2, II_3$ follow exactly as in \rif{IIdue}-\rif{pdopo}. This leads to \rif{cacci} with $\tilde V$ defined as $\tilde V := \|Du_\ep\|_{L^{\infty}(B_{R})}V.$ Proceeding as in the proof of Theorem \ref{main01} we arrive at
\begin{align}
\nonumber h(|Du(x)|)&|Du(x)|\\
&\leq c\left(\mean{B(x,\tr)} (h(|Du|)|Du|)^2\, dy \right)^{\frac{1}{2}} +c \|Du\|_{L^{\infty}(B(x,\tr))}{\bf P}^{V}(x,\tr)\label{cover1h}.
\end{align}
In turn, proceeding as after \rif{cover1} and using the fact that $t \leq h(t)t+1$ we conclude with \rif{apestimate}.

Similar adjustments can be done in the vectorial case $N> 1 $.
\section{Appendix: Basic facts about regularity}\label{app}
Let us now recall a few basic facts about regularity of solutions to equations and systems of the type
$$
\divo\, a(Dw)=f \qquad \mbox{in}\ \Omega'
$$
where we are assuming assumptions \rif{asp} with $s>0$ on the left hand side vector field $a(\cdot)$; on the right hand side we shall assume that
$
 |f|\leq M.$
 These are the equations and systems of the type considered in \rif{Dirapp} when proving the a priori estimates of Theorems \ref{main01}-\ref{maind2}. In order to make sense to all the involved quantities there we needed to have \rif{serve}, that is
\eqn{serveoo}
$$
w \in W^{2,2}_{\loc}(\Omega', \er^N)\cap C^{1, \alpha}_{\loc}(\Omega', \er^N)\,.
$$
We shall now recall how to prove \rif{serveoo}; both in the scalar and in the vectorial case with the additional structure assumption we have that $w \in  C^{1, \alpha}_{\loc}(\Omega', \er^N)$ for some $\alpha>0$ depending on $M$. As far as the higher differentiability is concerned we have that
$$
\int_{\Omega''} (s+|Dw|^2)^{\frac{p-2}{2}}|D^2w|^2\, dx < \infty
$$
whenever $\Omega'' \Subset \Omega'$. When $p\geq 2$ this immediately implies $w \in W^{2,2}_{\loc}(\Omega', \er^N)$ since $s>0$; for the case $1<p<2$ we using the fact that $Dw \in L^\infty(\Omega, \ma)$ we again conclude that $w \in W^{2,2}_{\loc}(\Omega', \er^N)$ and \rif{serveoo} is established. Similar results holds for solutions under the assumptions \rif{aspge}. Good references on such aspects, including the assumptions in \rif{aspge} - are for instance \cite{DB1, DB2, Li}.

{\bf Acknowledgments.} This research has been supported by the ERC grant 207573 \ap Vectorial Problems". The results in this paper were obtained in August 2009 while the second named author was visiting the University of Erlangen-Nuremberg and presented during the conference \ap Regularity for non-linear PDE", held in September 2009 at Scuola Normale Superiore, Pisa.

\end{document}